\newtheorem{theorem}{Theorem}
\theoremstyle{plain}
\newtheorem{condition}{Condition}
\newtheorem{corollary}{Corollary}
\newtheorem{definition}{Definition}
\newtheorem{example}{Example}
\newtheorem{lemma}{Lemma}
\newtheorem{proposition}{Proposition}
\newtheorem{remark}{Remark}
\numberwithin{equation}{section}
\begin{document}
\title[Pointwise Symmetrization Inequalities]{Pointwise Symmetrization Inequalities for Sobolev functions and applications}
\author{Joaquim Mart\'{\i}n$^{\ast}$}
\address{Department of Mathematics\\
Universitat Aut\`{o}noma de Barcelona}
\email{jmartin@mat.uab.cat}
\author{Mario Milman}
\address{Department of Mathematics\\
Florida Atlantic University}
\email{extrapol@bellsouth.net}
\urladdr{http://www.math.fau.edu/milman}
\thanks{$^{\ast}$Supported in part by Grants MTM2007-60500, MTM2008-05561-C02-02 and
by 2009SGR1128.}
\thanks{This paper is in final form and no version of it will be submitted for
publication elsewhere.}
\keywords{Logarithmic Sobolev inequalities, Poincar\'{e}, symmetrization, isoperimetric
inequalities, concentration.}

\begin{abstract}
We develop a technique to obtain new symmetrization inequalities that provide
a unified framework to study Sobolev inequalities, concentration inequalities
and sharp integrability of solutions of elliptic equations.
\end{abstract}\maketitle
\tableofcontents

\section{Introduction}

Symmetrization is a very useful classical tool in PDE's and the theory of
Sobolev spaces. The standard symmetrization inequalities, like many other
inequalities in the theory of Sobolev spaces, are often formulated as norm
inequalities. One drawback is that these inequalities need to be (re)proven
separately for different classes of spaces (e.g. $L^{p},$ Lorentz, Orlicz,
Lorentz-Karamata, etc.). For this purpose interpolation can be a useful tool,
but one may lose information in the extreme cases. Moreover, the end point
Sobolev embeddings usually require a different type of spaces (often called
``extrapolation spaces''). Thus, for example, the optimal embeddings of
$L^{p}$ based Sobolev spaces on $n-$dimensional Euclidean space are the
Lorentz $L(p^{\ast},p)$ spaces, where $\frac{1}{p^{\ast}}=\frac{1}{p}-\frac
{1}{n},$ $1\leq p<n,$ but for the limiting case $p=n$ it is necessary to
replace the Lorentz norms by suitable variants in order to accommodate
exponential integrability. One way to deal with this problem is to use
pointwise rearrangement inequalities; among the many contributions in this
direction here we only mention just a few \cite{gar}, \cite{tal}, \cite{tal1},
\cite{kol}, \cite{Ban}, \cite{bec}, \cite{ga}, \cite{alvino}, \cite{Cia},
\cite{carl}, \cite{mmpams}, \cite{mmletter}, \cite{ra}, \cite{Leo}, [32], and
refer the reader to the references therein. An added complication arises
because different geometries produce different types of optimal spaces: a
dramatic example is provided by Gaussian measure, where the optimal target
spaces for the embeddings of $L^{p}$ based Sobolev spaces are the
$L^{p}(LogL)^{p/2}$ spaces (cf. \cite{gro}, \cite{fei}, [1], \cite{bec2},
\cite{bec1}, and the references therein). Likewise, in the study of
integrability of solutions of elliptic equations, the corresponding optimal
results depend on the geometry. As a consequence, although many of the methods
used in the treatment of the different cases are similar each case still
requires a separate treatment.

In our recent work (cf. \cite{mmpjfa}, \cite{mmjfa}, \cite{mmcomptes}) we have
developed new symmetrization inequalities that address all these issues and
can be applied to provide a unified treatment of sharp Sobolev-Poincar\'{e}
inequalities, concentration inequalities and sharp integrability of solutions
of elliptic equations. Our inequalities combine three basic features, each of
which may have been considered before but, apparently, not all of them
simultaneously; namely our inequalities are (i) pointwise rearrangement
inequalities, (ii) incorporate in their formulation the isoperimetric profile
and (iii) are formulated in terms of oscillations.

The first feature (i) allows us to treat without effort the class of all
rearrangement invariant function norms. Let us illustrate this point with the
classical P\'{o}lya-Szeg\"{o} inequality. On $\mathbb{R}^{n}$ this principle
can be informally stated as%
\begin{equation}
\left\|  \left|  \nabla f^{\circ}\right|  \right\|  _{L^{p}(\mathbb{R}^{n}%
)}\leq\left\|  \left|  \nabla f\right|  \right\|  _{L^{p}(\mathbb{R}^{n}%
)},\text{ }1\leq p\leq\infty, \label{int1}%
\end{equation}
where $f^{\circ}$ is the symmetric rearrangement\footnote{$f^{\circ
}(x)=f^{\ast}(\omega_{n}\left|  x\right|  ^{n}),$ is the symmetric decreasing
rearrangmeent of $f,$ $\omega_{n}$ is the measure of the unit ball in
$\mathbb{R}^{n}.$} of $f.$ This inequality leaves open the question of what
would be the corresponding results for other function norms, indeed, different
types of norms are often treated one case at a time in the literature. The
formulation of (\ref{int1}) we use takes the form
\begin{equation}
\left|  \nabla f^{\circ}\right|  ^{\ast\ast}(t)\leq\left|  \nabla f\right|
^{\ast\ast}(t), \label{int2}%
\end{equation}
where $f^{\ast\ast}(t)=\frac{1}{t}\int_{0}^{t}f^{\ast}(s)ds,$ and $f^{\ast}$
is the non increasing rearrangement of $f$ with respect to Lebesgue measure on
$\mathbb{R}^{n}$. The point is that (\ref{int2}) readily implies%
\begin{equation}
\left\|  \left|  \nabla f^{\circ}\right|  \right\|  _{X(\mathbb{R}^{n})}%
\leq\left\|  \left|  \nabla f\right|  \right\|  _{X(\mathbb{R}^{n})},
\label{int3}%
\end{equation}
for all rearrangement invariant spaces $X$ on $\mathbb{R}^{n}$ (see Section
\ref{secc:ri} below)$.$

The fact that our inequalities incorporate the isoperimetric profile [feature
(ii)] allows us to treat different geometries from a unified point of view.
Indeed, it is the isoperimetric profile itself that helps us determine the
correct function spaces! For example, as we show below (cf. Theorem
\ref{teomain}), the isoperimetric inequality can be reformulated on metric
probability spaces $\left(  \Omega,d,\mu\right)  $, (cf. \cite{mmcomptes}, and
also \cite{bmr}, \cite{kol}, \cite{mmpjfa}, \cite{mmjfa}, for Euclidean or
Gaussian versions, see also \cite{coul} for a somewhat different perspective)
as follows\footnote{Although the Euclidean version of (\ref{int4}) is
implicitly proven in \cite{alvino} it is not used in this form in that paper.}%
\begin{equation}
f_{\mu}^{\ast\ast}(t)-f_{\mu}^{\ast}(t)\leq\frac{t}{I(t)}\left|  \nabla
f\right|  _{\mu}^{\ast\ast}(t), \label{int4}%
\end{equation}
where $f_{\mu}^{\ast\ast}(t)=\frac{1}{t}\int_{0}^{t}f_{\mu}^{\ast}(s)ds,$ and
$f_{\mu}^{\ast}$ is the non increasing rearrangement of $f$ with respect to
the measure $\mu\ $and $I(t)=I_{(\Omega,d,\mu)}(t)$ is the corresponding
isoperimetric profile. If we apply a rearrangement invariant function norm
$X\ $on $\Omega$ (see Section \ref{secc:ri} below) to (\ref{int4}) we obtain
Sobolev-Poincar\'{e} type estimates of the form\footnote{The spaces $\bar{X}$
are defined in Section \ref{secc:ri} below.}%
\begin{equation}
\left\|  f\right\|  _{LS(X)}:=\left\|  \left(  f_{\mu}^{\ast\ast}(t)-f_{\mu
}^{\ast}(t)\right)  \frac{I(t)}{t}\right\|  _{\bar{X}}\leq\left\|  \left|
\nabla f\right|  _{\mu}^{\ast\ast}\right\|  _{\bar{X}}. \label{int5'}%
\end{equation}
These embeddings turn out to be best possible in all the classical cases, at
least for spaces that are far from $L^{1}$ (the integrated form of
(\ref{int4}) can be used to cope with this problematic end point as well, see
Proposition \ref{l1l1} below and \cite{mmpjfa} for the Euclidean case). To see
how the isoperimetric profile helps to determine the correct spaces consider
the following basic model cases: (a) $\mathbb{R}^{n}$ with Euclidean measure.
Let $X=L^{p},$ $1\leq p\leq n,$ and let $p^{\ast}$ be the usual Sobolev
exponent defined by $\frac{1}{p^{\ast}}=\frac{1}{p}-\frac{1}{n},$ then from
the fact that $I(t)=c_{n}t^{1-1/n}$ it follows that\footnote{Here the symbol
$f\simeq g$ indicates the existence of a universal constant $c>0$ (independent
of all parameters involved) such that $(1/c)f\leq g\leq c\,f$. Likewise the
symbol $f\preceq g$ will mean that there exists a universal constant $c>0$
(independent of all parameters involved) such that $f\leq c\,g$.}
\begin{equation}
\left\|  \left(  f^{\ast\ast}(t)-f^{\ast}(t)\right)  \frac{I(t)}{t}\right\|
_{L^{p}}\simeq\left(  \int_{0}^{\infty}\left(  \left(  f^{\ast\ast}%
(t)-f^{\ast}(t)\right)  t^{\frac{1}{p^{\ast}}}\right)  ^{p}\frac{dt}%
{t}\right)  ^{1/p}. \label{tarde}%
\end{equation}
Moreover, if $1\leq p<n$, then it follows easily from Hardy's inequality that%
\[
\int_{0}^{\infty}\left(  \left(  f^{\ast\ast}(t)-f^{\ast}(t)\right)
t^{\frac{1}{p^{\ast}}}\right)  ^{p}\frac{dt}{t}\simeq\int_{0}^{\infty}\left(
f^{\ast\ast}(t)t^{\frac{1}{p^{\ast}}}\right)  ^{p}\frac{dt}{t}=\left\|
f\right\|  _{L(p^{\ast},p)}^{p}.
\]

(b) $\mathbb{R}^{n}$ with Gaussian measure $\gamma_{n}$. Let $X=L^{p},$
$1<p<\infty,$ then (compare with \cite{gro}, \cite{fei}), since $I(t)\simeq
t(\log1/t)^{1/2}$ for $t$ near zero, we have%
\begin{align}
\left\|  \left(  f_{\gamma_{n}}^{\ast\ast}(t)-f_{\gamma_{n}}^{\ast}(t)\right)
\frac{I(t)}{t}\right\|  _{L^{p}}  &  \simeq\left(  \int_{0}^{1}\left(  \left(
f_{\gamma_{n}}^{\ast\ast}(t)-f_{\gamma_{n}}^{\ast}(t)\right)  (\log\frac{1}%
{t})^{1/2}\right)  ^{p}\frac{dt}{t}\right)  ^{1/p}\nonumber\\
&  \simeq\left\|  f\right\|  _{L^{p}(Log)^{p/2}}. \label{tarde1}%
\end{align}

We note that feature (iii) allows us to use systematically spaces that are
defined in terms of oscillations (cf. \cite{bds}, \cite{bmr}, \cite{mp}) so
that, in particular, we can treat the borderline cases in a unified fashion.
For example, in the Gaussian case (\ref{tarde1}) we can let $p=\infty,$ and we
obtain the concentration result (cf. \cite{mmjfa})%
\begin{equation}
f\in Lip(\mathbb{R}^{n})\Rightarrow\left\|  \left(  f_{\gamma_{n}}^{\ast\ast
}(t)-f_{\gamma_{n}}^{\ast}(t)\right)  \frac{I(t)}{t}\right\|  _{L^{\infty}%
}<\infty\Rightarrow f\in e^{L^{2}}; \label{compara1}%
\end{equation}
while on $\mathbb{R}^{n}$ with Euclidean measure, $p^{\ast}=\infty$ is allowed
in (\ref{tarde}), and indeed, when $p=n,$ our condition is
optimal\footnote{Thus our conditions slightly improve the exponential
integrability of the borderline cases. More generally, this feature makes our
inequalities and spaces relevant for the theory of concentration of
inequalities (cf. \cite{ledouxbk}, \cite{mmjfa}).} (cf. \cite{bmr}) and reads%
\begin{align}
f  &  \in W_{1}^{n}(\mathbb{R}^{n})\Rightarrow\left\|  f\right\|
_{L(\infty.n)}=\left(  \int_{0}^{\infty}(f^{\ast\ast}(t)-f^{\ast}(t))^{n}%
\frac{dt}{t}\right)  ^{1/n}<\infty\nonumber\\
&  \Rightarrow f\in e^{L^{n^{\prime}}}. \label{compara2}%
\end{align}

It also follows that if the isoperimetric profile does not depend on the
dimension (e.g. this is case in the Gaussian case) then (\ref{int4}) and
(\ref{int5'}) are ``dimension free''.

Returning to the P\'{o}lya-Szeg\"{o} inequality (\ref{int2}) note that, by
construction, the inequality requires the choice of a distinguished
rearrangement. A \textit{posteriori}, one can see that the choice of the
optimal symmetric rearrangement in (\ref{int1}) is ultimately connected with
the solution of the isoperimetric problem on $\mathbb{R}^{n}$. Thus, it is not
surprising that the corresponding inequality in the Gaussian case also
requires a special rearrangement that is connected with the corresponding
solution of the Gaussian isoperimetric problem (cf. \cite{bo}, \cite{sut},
\cite{erh}, \cite{carloss}, and the references therein, and also \cite{mmjfa}
for a more recent treatment).

More generally, to obtain a general version of the P\'{o}lya-Szeg\"{o}
principle valid on metric spaces, we divide the problem at hand in two. First,
we derive a general inequality that does not require us to make a specific
choice of rearrangements but involves the isoperimetric profile, namely (cf.
Theorem \ref{teomain} below)%
\[
\int_{0}^{t}((-f_{\mu}^{\ast})^{\prime}(\cdot)I(\cdot))^{\ast}(s)ds\preceq
\int_{0}^{t}\left|  \nabla f\right|  _{\mu}^{\ast}(s)ds,
\]
where the second rearrangement on the left hand side is with respect to the
Lebesgue measure on $(0,1)$. The second step requires the construction of a
suitable rearrangement. At this point we only know how to construct special
rearrangements for some model cases. For more on this see the discussion in
Section \ref{secc::p-s}, where we consider in detail three important model
examples: (a) measures on $\mathbb{R}^{n}$ which are products of measures of
the form
\[
\mu^{\Phi}=Z_{\Phi}^{-1}\exp\left(  -\Phi(\left|  x\right|  )\right)  dx,
\]
where $\Phi$ is convex and $\sqrt{\Phi}$ is concave and where $Z_{\Phi}^{-1}$
is a normalization constant chosen to ensure that $\mu^{\Phi}(\mathbb{R)=}1$;
(b) the $n-$sphere $\mathbb{S}^{n}$, and (c) the model spaces studied by
Barthe, Ros and others (cf. \cite{Ros} and the references quoted therein). In
each of these model cases we show that a suitable version of the
P\'{o}lya-Szeg\"{o} principle (\ref{int3}) holds.

In Section \ref{secc::po} we derive Poincar\'{e} inequalities and, using the
results of Section \ref{secc::p-s}, we show their sharpness in the model
cases. A typical result in this section gives the equivalence between
Poincar\'{e} inequalities of the form%
\[
\left\|  g-\int_{\Omega}gd\mu\right\|  _{Y}\preceq\left\|  \left|  \nabla
g\right|  \right\|  _{X}%
\]
and the boundedness of certain Hardy type operators associated with the
corresponding isoperimetric profiles (=``isoperimetric Hardy operators'') (cf.
Theorem \ref{opti00} below). These results led us to introduce the metric
probability spaces of ``isoperimetric Hardy type'' (cf. \cite{mmmazya}): these
are exactly the spaces where this characterization of Poincar\'{e}
inequalities holds. This concept turns out to have interesting applications.

Section \ref{secc:emanuel} was inspired by the remarkable recent results by E.
Milman (cf. \cite{MiE}, \cite{mie1}, \cite{mie2} and the references therein).
E. Milman showed that, for Riemannian manifolds satisfying suitable convexity
conditions (cf. Example \ref{ej:emanu} below), we have an equivalence between
isoperimetry, Poincar\'{e} inequalities and concentration. In this section we
show that E. Milman's equivalences hold for metric spaces\footnote{Note that
in this paper we assume that all isoperimetric profiles are concave.} of
isoperimetric Hardy type. We should stress that this result does not provide
us with a proof of E. Milman's results since the precise connection between
isoperimetric Hardy type and convexity conditions is still an open problem.

Isoperimetric Hardy type also plays a fundamental role in Section
\ref{secc::transference}, where we develop a simple transference principle
that allows us to transfer Poincar\'{e} inequalities from one metric space to
another, if we have a suitable majorization of the corresponding isoperimetric
profiles. More precisely, we show that if for two metric probability spaces we
have
\[
I_{(\Omega_{1},d_{1},\mu_{1})}(t)\geq cI_{(\Omega,d,\mu)}(t),\ t\in(0,1/2],
\]
and $(\Omega,d,\mu)$ is of isoperimetric Hardy type then any Poincar\'{e}
inequality of the form%
\[
\left\|  g-\int_{{\Omega}}gd\mu\right\|  _{Y(\Omega)}\leq c\left\|  \left|
\nabla g\right|  \right\|  _{X(\Omega)},\text{ for all }g\in Lip(\Omega),
\]
\ can be transferred to a corresponding Poincar\'{e} inequality for
$\Omega_{1}$ (cf. Theorem \ref{teosubordinado}),%

\[
\left\|  g-\int_{{\Omega}_{1}}gd\mu_{1}\right\|  _{Y(\Omega_{1})}\leq
c\left\|  \left|  \nabla g\right|  \right\|  _{X(\Omega_{1})},\ \text{for all
}g\in Lip(\Omega_{1}).
\]
This easy to formulate principle thus allows for the transference of
Poincar\'{e} inequalities from all the model cases discussed above. For
example, the Levi-Gromov isoperimetric inequality implies that Poincar\'{e}
inequalities for the $n-$sphere can be transferred to compact connected
manifolds with Ricci curvature bounded from below by $\rho>0$ (cf.\ Corollary
\ref{corosuboesferico}), extending earlier work in \cite{islas} for the
$L^{p}$ case). Likewise, Poincar\'{e} inequalities valid for $\mathbb{R}^{n}$
with Gaussian measure (cf. \cite{mmjfa}) can be transferred to Riemannian
manifolds $(M,g)$ with isoperimetric profile $I$ for which we have (cf.
Corollary \ref{corosubogauss})
\[
I(t)\geq ct\left(  \log\frac{1}{t}\right)  ^{1/2},\ t\in(0,1/2].
\]
In the same vein we can transfer Poincar\'{e} inequalities valid for
$(\mathbb{R}^{n},\mu_{p}^{\otimes n})$ with $\mu_{p}=Z_{p}^{-1}\exp\left(
-\left|  x\right|  ^{p}\right)  dx,1<p\leq2,$ this leads to simplifications to
recent results of \cite{BaKo} (cf. Corollary \ref{coco}).

When the first version of our manuscript was being typed we received a query
from Professor Hans Triebel concerning certain Sobolev inequalities with
dimension free constants. We give a brief answer to some of Prof. Triebel's
questions in Section \ref{secc:trie}.

In a different direction, in Section \ref{secc:semi} we extend E. Milman's
methods (based on the use of semigroup technique of Ledoux and Bakry and
Ledoux (cf. \cite{led1}, \cite{led2}, \cite{led3}, \cite{bakrled}, and the
references therein) to estimate isoperimetric profiles associated with
functional inequalities involving r.i. spaces.

In Section \ref{secc:ga}, motivated by the results and methods of Gallot
\cite{ga} (cf. also \cite{Ta} and \cite{Ban}), we extend our results and prove
inequalities for the Laplacian. For example, the corresponding extension of
(\ref{int4}) is given by
\begin{equation}
f_{\mu}^{\ast\ast}(t)-f_{\mu}^{\ast}(t)\leq\frac{1}{t}\int_{0}^{t}\left(
\frac{s}{I(s)}\right)  ^{2}\left|  \Delta f\right|  _{\mu}^{\ast\ast}(s)ds.
\label{int6}%
\end{equation}
When $I(t)$ is concave, a global standing assumption in this paper, then
(\ref{int6}) implies the more suggestive inequality (compare with
(\ref{int4}))%
\begin{equation}
f_{\mu}^{\ast\ast}(t)-f_{\mu}^{\ast}(t)\leq\left(  \frac{t}{I(t)}\right)
^{2}\frac{1}{t}\int_{0}^{t}\left|  \Delta f\right|  _{\mu}^{\ast\ast}(s)ds.
\label{cerca}%
\end{equation}
As a consequence we obtain higher order Sobolev-Poincar\'{e} inequalities of
the form%
\begin{equation}
\left\|  \left(  f_{\mu}^{\ast\ast}(t)-f_{\mu}^{\ast}(t)\right)  \left(
\frac{I(t)}{t}\right)  ^{2}\right\|  _{\bar{X}}\preceq\left\|  \left|  \Delta
f\right|  \right\|  _{X}. \label{cerca1}%
\end{equation}
Although we only consider second order inequalities in this paper, estimates
like (\ref{cerca}) and (\ref{cerca1}) are easy to iterate to inequalities
involving higher order derivatives (cf. \cite[Theorem 3.2]{mp}) leading to new
sharp higher order embeddings for Sobolev spaces based on r.i. spaces. Once
again the results are sharp and include sharpenings of the borderline cases.
Our results in this direction extend and unify earlier Euclidean results (cf.
\cite{cwp}, \cite{edmund}, \cite{Cia}, \cite{mp}, \cite{mmjmaa} and the
references therein), as well as $L^{p}$ and Orlicz-Gaussian results (cf.
\cite{fei}, \cite{bakme}, \cite{bakme2}, \cite{shi}).

Using variants of techniques developed by Maz'ya \cite{mazyatal}, and Talenti
and his school (cf. \cite{Ta}, \cite{tal}, \cite{tal1}, \cite{Ta1}, \cite{AFT}
and the references therein), the higher order results of Section \ref{secc:ga}
can be considerably extended in order to study the sharp integrability of
solutions of non-linear elliptic equations of the form%

\begin{equation}
\left\{
\begin{array}
[c]{ll}%
-div(a(x,u,\nabla u))=fw & \text{ in }G,\\
u=0 & \text{ on }\partial G,
\end{array}
\right.  \label{int21}%
\end{equation}
where $G$ is an open domain of $\mathbb{R}^{n}$ ($n\geq2),$ $w\ $is a
nonnegative measurable function on $\mathbb{R}^{n}$, such that the measure
$\mu=w(x)dx,$ is a probability measure, $a(x,\eta,\xi):G\times\mathbb{R}%
\times\mathbb{R}^{n}\rightarrow\mathbb{R}^{n}$ is a Carath\'{e}odory function
such that,%
\[
a(x,t,\xi).\xi\geq w(x)\left|  \xi\right|  ^{p},\ \ for\text{ }a.e.\ \ x\in
G,\ \forall\eta\in\mathbb{R},\ \forall\xi\in\mathbb{R}^{n}.
\]
This material is developed in Section \ref{secc:el} where we consider
\textit{a priori} estimates of entropy solutions of (\ref{int21}). For
example, for $p=2,$ we show that an entropic solution of (\ref{int21})
satisfies%
\[
\left\|  \left(  u_{\mu}^{\ast\ast}(t)-u_{\mu}^{\ast}(t)\right)  \left(
\frac{I(t)}{t}\right)  ^{2}\right\|  _{\bar{X}}\preceq\left\|  f_{\mu}%
^{\ast\ast}\right\|  _{\bar{X}},
\]
from where we can obtain sharp \textit{a priori} integrability results for
entropy solutions. Moreover, we also obtain estimates on the regularity of the
gradient. For example, extending results in \cite{AFT} we have (cf. Theorem
\ref{remderv} below)
\[
\left|  \nabla u\right|  _{\mu}^{\ast}(t)\leq\left(  \frac{2}{t}\int
_{t/2}^{\mu(G)}\left(  \frac{I(s)}{s}f_{\mu}^{\ast\ast}(s)\right)
^{2}ds\right)  ^{1/2}.
\]
These estimates can be used to obtain norm estimates under suitable
assumptions on $\bar{X}$ (cf. Theorem \ref{remderv} below):%
\[
\left\|  \frac{I(t)}{t}\left|  \nabla u\right|  _{\mu}^{\ast}(t)\right\|
_{\bar{X}}\preceq\left\|  f_{\mu}^{\ast\ast}\right\|  _{\bar{X}}.
\]
Again we point out that the isoperimetric profile determines the nature of the
correct integrability conditions.

In Section \ref{secc:ma} we discuss the connection between Maz'ya's capacitary
inequalities and the method of symmetrization by truncation. We conclude in
Section \ref{secc:appendix} by recording a few (and only a few)
bibliographical notes.

Finally a few words about the techniques. A common method to obtain
rearrangement inequalities is via interpolation or extrapolation (cf.
\cite{ca}, \cite{jm}) however these methods do not necessarily produce the
best possible end point results. Maz'ya \cite{maz'yabook} has shown that
Sobolev inequalities self improve using his technique of smooth cut-offs. In a
different direction, Maz'ya, and independently Federer and Fleming, (cf.
\cite{maz'yabook}, \cite{fed}), also showed the equivalence between
isoperimetry and Sobolev embeddings. It is easy to see that these ideas are
closely related. Indeed, consider the following three versions of the
classical Gagliardo-Nirenberg inequality in increasing order of precision, for
$f\in C_{0}^{\infty}(\mathbb{R}^{n}),$%
\begin{equation}
\left\|  f\right\|  _{L(n^{\prime},\infty)}\preceq\left\|  \left|  \nabla
f\right|  \right\|  _{L^{1}},\text{ weak type Gagliardo-Nirenberg} \label{gn1}%
\end{equation}%
\begin{equation}
\left\|  f\right\|  _{L^{n^{\prime}}}\preceq\left\|  \left|  \nabla f\right|
\right\|  _{L^{1}},\text{ classical Gagliardo-Nirenberg} \label{gn2}%
\end{equation}%
\begin{equation}
\left\|  f\right\|  _{L(n^{\prime},1)}\preceq\left\|  \left|  \nabla f\right|
\right\|  _{L^{1}},\text{ sharp Gagliardo-Nirenberg,} \label{gn3}%
\end{equation}
and note that for an approximating sequence $\{f_{n}\}_{n}\mapsto\chi_{A}$ the
left hand sides of (\ref{gn1}), (\ref{gn2}), (\ref{gn3}) all tend to $\left|
A\right|  ^{1/n^{\prime}}$, while the right hand sides are always a multiple
of $\mu^{+}(A),$ the perimeter of $A.$ Thus, disregarding constants, the
Maz'ya-Federer-Fleming equivalence theorem shows that (\ref{gn1})
automatically self improves to (\ref{gn3}).

Although in this paper we don't formally use interpolation/extrapolation
theory we borrow one basic idea from this field that originates in the work of
Calder\'{o}n \cite{ca} (cf. also \cite{BS}), in PDE's this idea also appears
in the work of Talenti (\cite{tal} and \cite{tal1}, see also Section
\ref{secc:sharp} below), and was somewhat later taken up in the extrapolation
theory of Jawerth and Milman \cite{jm}; namely that families of inequalities
can be characterized in terms of pointwise rearrangement inequalities. Indeed,
in Calder\'{o}n's program \cite{ca} families of inequalities for a given
operator are characterized in terms of pointwise rearrangement inequalities
from which each individual functional norm inequalities follows readily. The
point is that one norm inequality is not enough to effect this characterization.

Take the inequalities (\ref{gn1}), (\ref{gn2}), (\ref{gn3}), which as we have
argued above, are, in some sense, equivalent, in this case the ``correct'' way
to express this phenomenon is via the rearrangement inequality (\ref{int4}).
The technique to prove this equivalence uses systematically Maz'ya's smooth
truncations method as a tool to obtain rearrangement inequalities
(``symmetrization by truncation''). We notice parenthetically that truncations
are also a basic tool in interpolation/extrapolation theory (for more on this
see Section \ref{secc:trunc}).\newline \textbf{Acknowledgement.} \textit{We
are very grateful to the referee for providing us with valuable references and
many helpful suggestions to shorten and improve the quality of the paper}.

\section{Background}

We use for the most part a standard notation. For the discussion on metric
spaces it will simplify the discussion somewhat to consider only probability
spaces, a convention we keep for the rest of the paper.

We always consider connected metric spaces $\left(  \Omega,d,\mu\right)  $
equipped with a separable, non-atomic, Borel probability measure $\mu$. For
measurable functions $u:\Omega\rightarrow\mathbb{R},$ the distribution
function of $u$ is given by
\[
\mu_{u}(t)=\mu\{x\in{\Omega}:\left|  u(x)\right|  >t\}\text{ \ \ \ \ }(t>0).
\]
The \textbf{decreasing rearrangement} $u_{\mu}^{\ast}$ of $u$ is the
right-continuous non-increasing function from $[0,\infty)$ into $[0,\infty]$
which is equimeasurable with $u$. Namely,
\[
u_{\mu}^{\ast}(s)=\inf\{t\geq0:\mu_{u}(t)\leq s\}.
\]

It is easy to see that for any measurable set $E\subset\Omega$
\[
\int_{E}\left|  u(x)\right|  d\mu\leq\int_{0}^{\mu(E)}u_{\mu}^{\ast}(s)ds.
\]
In fact, the following stronger property holds (cf. \cite{BS}),%
\begin{equation}
\sup_{\mu(E)\leq t}\int_{E}\left|  u(x)\right|  d\mu=\int_{0}^{\mu(E)}u_{\mu
}^{\ast}(s)ds. \label{hp}%
\end{equation}
Since $u_{\mu}^{\ast}$ is decreasing, the function $u_{\mu}^{\ast\ast},$
defined by
\[
u_{\mu}^{\ast\ast}(t)=\frac{1}{t}\int_{0}^{t}u_{\mu}^{\ast}(s)ds,
\]
is also decreasing and, moreover,
\[
u_{\mu}^{\ast}\leq u_{\mu}^{\ast\ast}.
\]

On occasion, when rearrangements are taken with respect to the Lebesgue
measure or when the measure is clear from the context, we may omit the measure
and simply write $u^{\ast}$ and $u^{\ast\ast}$, etc.

For a Borel set $A\subset\Omega,$ the \textbf{perimeter} or \textbf{Minkowski
content} of $A$ is defined by
\[
\mu^{+}(A)=\lim\inf_{h\rightarrow0}\frac{\mu\left(  A_{h}\right)  -\mu\left(
A\right)  }{h},
\]
where $A_{h}=\left\{  x\in\Omega:d(x,A)<h\right\}  .$

The \textbf{isoperimetric profile} $I_{(\Omega,d,\mu)}$ is defined as the
pointwise maximal function $I_{(\Omega,d,\mu)}:[0,1]\rightarrow\left[
0,\infty\right)  $ such that%
\[
\mu^{+}(A)\geq I_{(\Omega,d,\mu)}\left(  \mu(A)\right)  ,
\]
holds for all Borel sets $A$. A set $A$ for which equality above is attained
will be called an \textbf{isoperimetric domain.}

\begin{example}
Let $(\Omega,d,\mu)$ be the metric measure space obtained from a $C^{\infty}$
complete oriented $n-$dimensional Riemannian manifold $(M,g)$, where $d$ is
the induced geodesic distance and $\mu$ is absolutely continuous with respect
to dvol$_{M}.$

(i) (cf. \cite[Proposition 1.5.1]{bayle}) $I_{(\Omega,d,\mu)}(t)$ is
continuous, and $I_{(\Omega,d,\mu)}(t)>0$ for $t\in(0,1).$

(ii) (cf. \cite[Proposition 1.2.2]{bayle})%
\[
I_{(\Omega,d,\mu)}(t)=I_{(\Omega,d,\mu)}(1-t),\forall t\in\lbrack0,1].
\]
\end{example}

\begin{example}
\label{ej:emanu}Suppose that $(\Omega,d,\mu)$ is as in the previous example.
We say that $(\Omega,d,\mu)$ satisfies E. Milman's convexity conditions if
$d\mu=e^{-\Psi}dvol_{M},$ where $\Psi$ is such that $\Psi\in C^{2}(M),$ and as
tensor fields Ric$_{g}+Hess_{g}(\Psi)\geq0$ on $M.$ Then it is known that
$I_{(\Omega,d,\mu)}$ is also concave (cf. \cite{mie1} and the extensive list
of references therein).
\end{example}

In view of the previous examples, and in order to balance generality with
power and simplicity, we will assume throughout the paper that our spaces
satisfy the following

\begin{condition}
\label{pedida}The metric probability spaces $(\Omega,d,\mu)$ considered in
this paper are assumed to have isoperimetric profiles $I_{(\Omega,d,\mu)}$
which are concave, continuous, increasing on $(0,1/2),$ symmetric about the
point $1/2$ and such that $I_{(\Omega,d,\mu)}(0)=0$.
\end{condition}

A continuous, concave function, $I:[0,1]\rightarrow\left[  0,\infty\right)  $,
increasing on $(0,1/2)$ and symmetric about the point $1/2,$ with $I(0)=0,$
and\ such that
\[
I_{(\Omega,d,\mu)}\geq I,
\]
will be called an \textbf{isoperimetric estimator} for $(\Omega,d,\mu).$

For a function $f$ on $\Omega$ which is Lipschitz in every ball (briefly $f\in
Lip(\Omega))$ we define, as usual, the \textbf{modulus of the gradient} by
\begin{equation}
|\nabla f(x)|=\limsup_{d(x,y)\rightarrow0}\frac{|f(x)-f(y)|}{d(x,y)}%
,\label{vista}%
\end{equation}
and zero at isolated points\footnote{For example on $\Omega=R^{n},$ the class
of Lipschitz functions on every ball coincides with the class of locally
Lipschitz functions (cf. \cite[pp. 184, 189]{BH} for more details).}.

\begin{condition}
For each $c\in R,|\nabla f(x)|=0,a.e.$ in the set $\{x:f(x)=c\}.$ This
condition is verified in all the classical cases (cf. [59] and also [62]).
\end{condition}

\subsection{Rearrangement invariant spaces\label{secc:ri}}

We recall briefly the basic definitions and conventions we use from the theory
of rearrangement-invariant (r.i.) spaces and refer the reader to \cite{BS},
\cite{KPS}, as well as \cite{pu}, \cite{pu1} and \cite{pu11}, for a complete
treatment. We say that a Banach function space $X=X({\Omega})$ on $({\Omega
},d,\mu)$ is rearrangement-invariant (r.i.) space, if $g\in X$ implies that
all $\mu-$measurable functions $f$ with the same rearrangement function with
respect to the measure $\mu$, i.e. such that $f_{\mu}^{\ast}=g_{\mu}^{\ast},$
also belong to $X,$ and, moreover, $\Vert f\Vert_{X}=\Vert g\Vert_{X}$.

Since $\mu(\Omega)=1,$ for any r.i. space $X({\Omega})$ we have%

\begin{equation}
L^{\infty}(\Omega)\subset X(\Omega)\subset L^{1}(\Omega), \label{nuevadeli}%
\end{equation}
with continuous embeddings.

An r.i. space $X({\Omega})$ can be represented by a r.i. space on the interval
$(0,1),$ with Lebesgue measure, $\bar{X}=\bar{X}(0,1),$ such that%
\[
\Vert f\Vert_{X}=\Vert f_{\mu}^{\ast}\Vert_{\bar{X}},
\]
for every $f\in X.$ A characterization of the norm $\Vert\cdot\Vert_{\bar{X}}$
is available (see \cite[Theorem 4.10 and subsequent remarks]{BS}). Typical
examples of r.i. spaces are the $L^{p}$-spaces, Lorentz spaces and Orlicz spaces.

A useful property of r.i. spaces states that if
\[
\int_{0}^{r}f_{\mu}^{\ast}(s)ds\leq\int_{0}^{r}g_{\mu}^{\ast}(s)ds,\text{
\ holds for all\ }r>0,
\]
then, for any r.i. space $X=X({\Omega}),$%
\[
\left\|  f\right\|  _{X}\leq\left\|  g\right\|  _{X}.
\]
The \textbf{associate space }of $X({\Omega})$\footnote{The associate space of
the associate space $X^{\prime}(\Omega)$ satisfies
\[
\left(  X^{\prime}(\Omega)\right)  ^{\prime}=X^{\prime\prime}(\Omega
)=X(\Omega).
\]
} is the r.i. space $X^{\prime}(\Omega)$ of all functions for which%
\begin{equation}
\left\|  h\right\|  _{X^{\prime}(\Omega)}=\sup_{g\neq0}\frac{\int_{\Omega
}\left|  g(x)h(x)\right|  d\mu}{\left\|  g\right\|  _{X(\Omega)}}<\infty.
\label{holhol}%
\end{equation}
Therefore the following generalized H\"{o}lder inequality holds%

\[
\int_{\Omega}\left|  g(x)h(x)\right|  d\mu\leq\left\|  g\right\|  _{X(\Omega
)}\left\|  h\right\|  _{X^{\prime}(\Omega)}.
\]

The \textbf{fundamental function\ }of $X$ is defined by
\[
\phi_{X}(s)=\left\|  \chi_{E}\right\|  _{X},
\]
where $E$ is any measurable subset of $\Omega$ with $\mu(E)=s.$ We can assume
without loss of generality that $\phi_{X}$ is concave. Moreover,%
\begin{equation}
\phi_{X^{\prime}}(s)\phi_{X}(s)=s. \label{dual}%
\end{equation}
\ For example, let $N$ be a Young's function, then the fundamental function of
the corresponding Orlicz space $L_{N}$ is given by
\begin{equation}
\phi_{L_{N}}(t)=1/N^{-1}(1/t). \label{quetiempos aquellos}%
\end{equation}

$\ $Associated with an r.i. space $X$ there are some useful Lorentz and
Marcinkiewicz spaces, namely the Lorentz and Marcinkiewicz spaces defined by
the quasi-norms
\[
\left\|  f\right\|  _{M(X)}=\sup_{t>0}f^{\ast}(t)\phi_{X}(t),\text{
\ \ }\left\|  f\right\|  _{\Lambda(X)}=\int_{0}^{1}f^{\ast}(t)d\phi_{X}(t).
\]
Notice that
\[
\phi_{M(X)}(t)=\phi_{\Lambda(X)}(t)=\phi_{X}(t),
\]
and that
\begin{equation}
\Lambda(X)\subset X\subset M(X). \label{tango}%
\end{equation}

Let $p>0$ and let $X$ be a r.i. space on $\Omega;$ the $p-$%
\textbf{convexification} $X^{(p)}$ of $X,$ (cf. \cite{lt}) is
defined\textbf{\ }by
\[
X^{(p)}=\{x:\left|  x\right|  ^{p}\in X\},\text{ \ \ }\left\|  x\right\|
_{X^{(p)}}=\left\|  \left|  x\right|  ^{p}\right\|  _{X}^{1/p}.
\]
We will say that $X$ is $p-$\textbf{convex} if and only if $X^{(1/p)}$ is a
Banach space.

Classically conditions on r.i. spaces are formulated in terms of the Hardy
operators defined by
\[
Pf(t)=\frac{1}{t}\int_{0}^{t}f(s)ds;\text{ \ \ \ }Q_{a}f(t)=\frac{1}{t^{a}%
}\int_{t}^{\infty}s^{a}f(s)\frac{ds}{s},\text{ \ \ }0\leq a<1,
\]
(if $a=0,$ we shall simply write $Q$ instead of $Q_{0}),$ the boundedness of
these operators on r.i. spaces can be simply described in terms of the so
called Boyd indices defined by
\[
\bar{\alpha}_{X}=\inf\limits_{s>1}\dfrac{\ln h_{X}(s)}{\ln s}\text{ \ \ and
\ \ }\underline{\alpha}_{X}=\sup\limits_{s<1}\dfrac{\ln h_{X}(s)}{\ln s},
\]
where $h_{X}(s)$ denotes the norm of the dilation operator $E_{s},$ $s>0,$ on
$\bar{X}$, defined by
\[
E_{s}f(t)=\left\{
\begin{array}
[c]{ll}%
f^{\ast}(\frac{t}{s}) & 0<t<s,\\
0 & s<t<1.
\end{array}
\right.
\]
The operator $E_{s}$ is bounded on $\bar{X}$ for every r.i. space $X(\Omega)$
and for every $s>0$. Moreover,
\begin{equation}
h_{X}(s)\leq\max\{1,s\}. \label{ccdd}%
\end{equation}
For example, if $X=L^{p}$, then $\overline{\alpha}_{L^{p}}=\underline{\alpha
}_{L^{p}}=\frac{1}{p}.$ It is well known that if $X$ is a r.i. space,
\begin{equation}%
\begin{array}
[c]{c}%
P\text{ is bounded on }\bar{X}\text{ }\Leftrightarrow\overline{\alpha}%
_{X}<1,\\
Q_{a}\text{ is bounded on }\bar{X}\text{ }\Leftrightarrow\underline{\alpha
}_{X}>a.
\end{array}
\label{alcance}%
\end{equation}
Finally, the following result will be useful in Section \ref{secc:el}

\begin{lemma}
\label{cota01}Let $Y$ be a r.i., space, let $q>0$ and let $w(s)$ be a monotone
function. Then
\[
\left\|  \left(  \frac{1}{t}\int_{t}^{1}\left(  w(s)f^{\ast}(s)\right)
^{q}ds\right)  ^{1/q}\right\|  _{Y}\leq c\left\|  wf\right\|  _{Y}\text{ \ if
\ }\underline{\alpha}_{Y}>1/q.
\]
\end{lemma}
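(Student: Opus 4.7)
The plan is to recognize the left-hand side as (the $q$th root of) the Hardy operator $Q_{1}$ applied to $\phi^{q}$, where $\phi(s):=w(s)f^{\ast}(s)$, and then to trade the $q$th root for a change of ambient r.i.\ space via the $p$-convexification construction: the Boyd-index hypothesis $\underline{\alpha}_{Y}>1/q$ is then exactly the boundedness condition from (\ref{alcance}) transported to the convexified space.

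Concretely, setting $\phi(s)=w(s)f^{\ast}(s)$ on $(0,1)$ and extending it by zero, the function
\[
g(t):=\left(\frac{1}{t}\int_{t}^{1}\phi^{q}(s)\,ds\right)^{1/q}
\]
satisfies $g^{q}=Q_{1}(\phi^{q})$, with $Q_{1}$ the operator in (\ref{alcance}) for $a=1$. A short differentiation shows that $g$ is itself non-increasing, so no rearrangement of the left-hand side is needed. I would then pass to the $(1/q)$-convexification $\bar{Y}^{(1/q)}$: since $\|h\|_{\bar{Y}^{(1/q)}}=\||h|^{1/q}\|_{\bar{Y}}^{\,q}$, raising the target inequality to the $q$th power rewrites it as $\|Q_{1}(\phi^{q})\|_{\bar{Y}^{(1/q)}}\leq c^{q}\|\phi^{q}\|_{\bar{Y}^{(1/q)}}$. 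Invoking (\ref{alcance}) on $\bar{Y}^{(1/q)}$ reduces matters to verifying $\underline{\alpha}_{Y^{(1/q)}}>1$. Using that $(E_{s}h)^{q}=E_{s}(h^{q})$ for the dilation operator $E_{s}$, the identity $h_{\bar{Y}^{(1/q)}}(s)=h_{\bar{Y}}(s)^{q}$ yields $\underline{\alpha}_{Y^{(1/q)}}=q\,\underline{\alpha}_{Y}$, and the hypothesis is exactly what is required. Notice that the monotonicity of $w$ never enters: the underlying Hardy estimate holds for arbitrary non-negative integrands, so $\phi^{q}$ need not be decreasing; the assumption is presumably only there to make the notation $\|wf\|_{Y}$ unambiguous in the intended applications.

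The one technical delicacy I anticipate is that when $q>1$ the convexification $\bar{Y}^{(1/q)}$ is only a quasi-Banach r.i.\ space, and so one must check that the Boyd-type characterization (\ref{alcance}) still applies in that setting. The cleanest way around this, if one wants to avoid appealing to the quasi-Banach version, is to prove the required Hardy estimate on $\bar{Y}^{(1/q)}$ directly via a dyadic decomposition of $(t,1]$ into intervals $(2^{-k-1},2^{-k}]$: each annular contribution is controlled by the norm of the dilation $E_{2^{-k}}$ on $\bar{Y}^{(1/q)}$, and the resulting geometric series converges precisely because $h_{\bar{Y}^{(1/q)}}(s)=h_{\bar{Y}}(s)^{q}$ together with $q\,\underline{\alpha}_{Y}>1$ forces exponential decay of the summands. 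This is the step I expect to require the most care in writing out.
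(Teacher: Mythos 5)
Your convexification reformulation is sound as far as it goes: rewriting the target as $\|Q_{1}(\phi^{q})\|_{\bar Y^{(1/q)}}\le c^{q}\|\phi^{q}\|_{\bar Y^{(1/q)}}$, observing that the left-hand side is already nonincreasing, and computing $\underline{\alpha}_{Y^{(1/q)}}=q\,\underline{\alpha}_{Y}$ are all correct, and a dyadic decomposition is the right engine. The gap is the remark that ``the monotonicity of $w$ never enters'' because ``the underlying Hardy estimate holds for arbitrary non-negative integrands''. It does not. When $q>1$ the space $\bar Y^{(1/q)}$ is only quasi-Banach, and on quasi-Banach r.i.\ spaces the operator $Q_{1}h(t)=\frac{1}{t}\int_{t}^{1}h$ is \emph{not} bounded on arbitrary nonnegative $h$, regardless of how large the lower Boyd index is: take $Y=L^{p}$ with $p<q$, so that $\bar Y^{(1/q)}=L^{p/q}$ and $q\,\underline\alpha_{Y}=q/p>1$, and set $h_{\varepsilon}=\varepsilon^{-1}\chi_{(1/2,1/2+\varepsilon)}$. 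Then $\|h_{\varepsilon}\|_{L^{p/q}}=\varepsilon^{q/p-1}\to 0$, while $Q_{1}h_{\varepsilon}(t)=1/t$ on $(0,1/2)$, so $\|Q_{1}h_{\varepsilon}\|_{L^{p/q}}$ stays bounded below. This is also why the criterion (\ref{alcance}) is only stated for $0\le a<1$; extrapolating it to $a=1$ on a quasi-Banach convexification is exactly the step that fails.

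What is true, and what your dyadic argument actually proves once you track the hypotheses, is that $Q_{1}$ is bounded on $\bar Y^{(1/q)}$ \emph{restricted to nonincreasing $h$} when $\underline\alpha_{Y}>1/q$: the estimate $\int_{2^{-k-1}}^{2^{-k}}h\le 2^{-k-1}h(2^{-k-1})$, which converts each annular contribution into a dilation of $h$ and makes the geometric series appear, requires $h$ nonincreasing. So the structure of $h=(wf^{*})^{q}$ is essential. For $w$ nonincreasing, $h$ is nonincreasing and the argument closes with a constant depending only on $Y$ and $q$. For $w$ nondecreasing --- the case actually invoked later in Theorem \ref{remderv}, where $w(s)=s/I(s)$ --- the product $wf^{*}$ need not be monotone; there one must use $f^{*}(s)\le f^{*}(2^{-k-1})$ together with the monotonicity of $w$ on each dyadic block separately, and the resulting constant genuinely depends on $w$ (it must: with the degenerate but monotone $w=\chi_{(a,1)}$ one can drive the ratio of the two sides to infinity). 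The hypothesis you dismiss as decorative is precisely what keeps the argument from running into the quasi-Banach counterexample above.
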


\begin{proof}
Is an elementary adaptation of the main result in \cite{MS}.
\end{proof}

\begin{remark}
In Section \ref{secc:hardy} and Section \ref{secc:el} we introduce new Hardy
operators that are associated with isoperimetric profiles and will play a role
in our theory.
\end{remark}

In \cite{mmjfa} and \cite{mmcomptes} we introduced the ``isoperimetric''
spaces $LS(X)$ defined by the condition%
\[
\left\|  f\right\|  _{LS(X)}:=\left\|  \left(  f_{\mu}^{\ast\ast}(t)-f_{\mu
}^{\ast}(t)\right)  \frac{I(t)}{t}\right\|  _{\bar{X}}<\infty.
\]
The inequality (\ref{int5'}) can be thus reformulated as
\begin{equation}
\left\|  f\right\|  _{LS(X)}\leq\left\|  P(\left|  \nabla f\right|  _{\mu
}^{\ast})\right\|  _{\bar{X}}. \label{corres}%
\end{equation}
The $LS(X)$ spaces not only give sharp embedding theorems that include
borderline cases but, due to the fact that their definition incorporates the
isoperimetric profile, they automatically ``select'' the optimal spaces
associated with a given geometry\footnote{In particular see the discussion
right after (\ref{int5'}) above. In the classical borderline cases these
isoperimetric spaces capture exponential integrability conditions and thus
seem to have a natural role in concentration inequalities (cf. Remark
\ref{remarkao1}, and \cite{ledouxbk}, \cite{mmjfa}).}.

The concept of median plays a role in the study of Poincar\'{e} inequalities
(cf. Section \ref{secc::po})

\begin{definition}
Let $f\ $be a measurable function, a real number $m_{e}$ will be called a
\textbf{median} of $f$ if
\[
\mu\left\{  f\geq m_{e}\right\}  \geq1/2\text{ \ and }\mu\left\{  f\leq
m_{e}\right\}  \geq1/2.
\]
\end{definition}

For most purposes to prove Poincar\'{e} inequalities (see (\ref{poin00})
below) it makes no difference if we work with a median $m_{e}$ or use the
``expectation'' $\int_{\Omega}fd\mu$. We record this fact in the next
lemma\footnote{Although the result is known we include a proof for the sake of completeness.}

\begin{lemma}
\label{media}Let $X$ be a r.i. space on $\Omega$. Then,%
\[
\frac{1}{2}\left\|  f-\int_{\Omega}fd\mu\right\|  _{X}\leq\left\|
f-m_{e}\right\|  _{X}\leq3\left\|  f-\int_{\Omega}fd\mu\right\|  _{X}.
\]
\end{lemma}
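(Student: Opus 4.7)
The plan is to reduce both inequalities to controlling a single scalar, namely $|c - m_{e}|$ where $c := \int_{\Omega} f\,d\mu$. Writing $g := f - c$ and $\widetilde{m} := m_{e} - c$, one checks immediately that $\widetilde{m}$ is a median of the centered function $g$, which has $\int g\,d\mu = 0$. Using the standard normalization $\|\chi_{\Omega}\|_{X} = \phi_{X}(1) = 1$ (compatible with (\ref{dual}) at $s = 1$), the triangle inequality yields
$$\|f - m_{e}\|_{X} \leq \|f - c\|_{X} + |c - m_{e}|, \qquad \|f - c\|_{X} \leq \|f - m_{e}\|_{X} + |c - m_{e}|,$$
so the task splits into bounding $|c - m_{e}|$ by $\|f - c\|_{X}$ in one direction and by $\|f - m_{e}\|_{X}$ in the other.

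The core of the argument will be a Chebyshev-type inequality $|c - m_{e}| \leq 2 \|f - c\|_{L^{1}(\mu)}$. After reducing to $\widetilde{m} \geq 0$ (replacing $f$ by $-f$ if necessary, noting $-m_{e}$ is then a median of $-f$), the defining property $\mu\{g \geq \widetilde{m}\} \geq 1/2$ combined with $\int g\,d\mu = 0$ gives the one-line Markov estimate
$$\|g\|_{L^{1}} \;\geq\; \int_{\{g \geq \widetilde{m}\}} g\,d\mu \;\geq\; \widetilde{m}\cdot\mu\{g \geq \widetilde{m}\} \;\geq\; \widetilde{m}/2.$$
The continuous embedding $X \hookrightarrow L^{1}(\mu)$ with norm $\phi_{X'}(1) = 1$ (H\"older against $\chi_{\Omega}$) then upgrades this to $|c - m_{e}| \leq 2\|f - c\|_{X}$. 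Substituting into the first triangle inequality above delivers the upper bound $\|f - m_{e}\|_{X} \leq 3\|f - c\|_{X}$.

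For the lower bound the estimate on $|c - m_{e}|$ is cheaper, since no sharp factor is required:
$$|c - m_{e}| \;=\; \Big|\int(f - m_{e})\,d\mu\Big| \;\leq\; \|f - m_{e}\|_{L^{1}} \;\leq\; \|f - m_{e}\|_{X},$$
so the second triangle inequality yields $\|f - c\|_{X} \leq 2\|f - m_{e}\|_{X}$, that is $\tfrac{1}{2}\|f - c\|_{X} \leq \|f - m_{e}\|_{X}$. I anticipate no real obstacle; the only non-mechanical ingredient is the Chebyshev bound, and its factor $1/2$, which depends crucially on the recentering $\int g\,d\mu = 0$, is precisely what yields the constant $3$ rather than anything worse on the right-hand side.
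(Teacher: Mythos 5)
Your proof is correct and follows essentially the same approach as the paper: you reduce both directions to bounding the scalar $|\int_{\Omega} f\,d\mu - m_e|$, obtain the key estimate $|m_e - \int f\,d\mu| \leq 2\|f - \int f\,d\mu\|_{L^1}$ via a Chebyshev/Markov argument using the median property together with the embedding $X \hookrightarrow L^1$, and conclude by the triangle inequality under the normalization $\|\chi_\Omega\|_X = 1$. One small inaccuracy in your closing commentary: the factor $1/2$ in the Markov step comes solely from $\mu\{g \geq \widetilde{m}\} \geq 1/2$ (the median property); the displayed chain of inequalities never actually invokes the recentering $\int g\,d\mu = 0$.
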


\begin{proof}
By (\ref{nuevadeli}) we have%
\[
\left|  \int_{\Omega}fd\mu-m_{e}\right|  \leq\int_{\Omega}\left|
f-m_{e}\right|  d\mu\leq\left\|  f-m_{e}\right\|  _{X},
\]
thus,%
\begin{align*}
\left\|  f-\int_{\Omega}fd\mu\right\|  _{X}  &  =\left\|  f-m_{e}+\int
_{\Omega}fd\mu+m_{e}\right\|  _{X}\\
&  \leq\left\|  f-m_{e}\right\|  _{X}+\left|  \int_{\Omega}fd\mu-m_{e}\right|
\\
&  \leq2\left\|  f-m_{e}\right\|  _{X}.
\end{align*}
To prove the converse we can assume that $m_{e}\geq\int_{\Omega}fd\mu$
(otherwise exchange $f$ by $-f$). Therefore, by Chebyshev's inequality, we
have%
\begin{align*}
1/2  &  \leq\mu\left\{  f\geq m_{e}\right\} \\
&  \leq\mu\left\{  \left|  f-\int_{\Omega}fd\mu\right|  \geq m_{e}%
-\int_{\Omega}fd\mu\right\} \\
&  \leq\frac{1}{\left(  m_{e}-\int_{\Omega}fd\mu\right)  }\int_{\Omega}\left|
f-\int_{\Omega}fd\mu\right|  .
\end{align*}
Consequently,%
\[
\left(  m_{e}-\int_{\Omega}fd\mu\right)  \leq2\left\|  f-\int_{\Omega}%
fd\mu\right\|  _{X},
\]
which implies%
\[
\left\|  m_{e}-\int_{\Omega}fd\mu\right\|  _{X}\leq2\left\|  f-\int_{\Omega
}fd\mu\right\|  _{X}.
\]
Therefore,%
\begin{align*}
\left\|  f-m_{e}\right\|  _{X}  &  =\left\|  f-\int_{\Omega}fd\mu-m_{e}%
+\int_{\Omega}fd\mu\right\|  _{X}\\
&  \leq3\left\|  f-\int_{\Omega}fd\mu\right\|  _{X}.
\end{align*}
\end{proof}

\section{Symmetrization using truncation and Isoperimetry \label{secc:trunc}}

The characterization of norm inequalities in terms of pointwise rearrangement
inequalities is a theme that seems to have originated in Interpolation theory.
In PDE's this idea appears prominently in the work of Talenti (cf. \cite{tal}
and \cite{tal1}) where it appears as a comparison principle. In interpolation
theory this method was developed in Calder\'{o}n's masterful paper \cite{ca}
(cf. also \cite{BS}), this idea is also important in the extrapolation theory
developed in \cite{jm}. Interestingly, while in our work we try to
characterize Sobolev norm inequalities in terms of rearrangement inequalities,
we generally don't use interpolation/extrapolation. In fact, the smooth
cut-off method, an idea apparently originating in the work of Maz'ya
\cite{maz'yabook} (cf. also \cite{bakr}, \cite{haj}, \cite{tar}, and the
references therein), shows that Sobolev inequalities have remarkable self
improving properties\footnote{In some sense this implies that a Sobolev
inequality carries the information of a family of Sobolev inequalities. If
this is combined with the chain rule one can see that one Sobolev inequality
also carries the ``reiteration'' property. Therefore, from our point of view,
Sobolev inequalities need not be interpolated but can be ``extrapolated''.}.
Combining these ideas with a basic technique of interpolation/extrapolation
(i.e. cutting off at levels dependent on the rearrangement of the function to
which we apply the cut-off itself!) we developed the technique of
``symmetrization by truncation''. The main result in this section is a natural
extension of similar, somewhat less general results, we obtained elsewhere
(cf. \cite{mmpjfa}, \cite{mmjfa}, see also \cite{BH} for the equivalence
between (\ref{isop}) and (\ref{ledo})).

\begin{theorem}
\label{teomain} Let $I$ $:[0,1]\rightarrow\left[  0,\infty\right)  $ be an
isoperimetric estimator on $(\Omega,d,\mu).$ The following statements hold and
are in fact equivalent:

\begin{enumerate}
\item  Isoperimetric inequality: for all Borel sets $A\subset\Omega,$%
\begin{equation}
\mu^{+}(A)\geq I(\mu(A)). \label{isop}%
\end{equation}

\item  Ledoux's inequality: for all functions $f$ $\in$ $Lip(\Omega),$%
\begin{equation}
\int_{0}^{\infty}I(\mu_{f}(s))ds\leq\int_{{\Omega}}\left|  \nabla f(x)\right|
d\mu. \label{ledo}%
\end{equation}

\item  Maz'ya's inequality\footnote{See Mazya \cite{mazyatal} and also Talenti
\cite{Ta}.}: for all functions $f$ $\in$ $Lip(\Omega),$%
\begin{equation}
(-f_{\mu}^{\ast})^{\prime}(s)I(s)\leq\frac{d}{ds}\int_{\{\left|  f\right|
>f_{\mu}^{\ast}(s)\}}\left|  \nabla f(x)\right|  d\mu. \label{dosa}%
\end{equation}

\item  P\'{o}lya-Szeg\"{o}'s inequality: for all functions $f$ $\in$
$Lip(\Omega),$%
\begin{equation}
\int_{0}^{t}((-f_{\mu}^{\ast})^{\prime}(.)I(.))^{\ast}(s)ds\leq\int_{0}%
^{t}\left|  \nabla f\right|  _{\mu}^{\ast}(s)ds. \label{provadas}%
\end{equation}
(The second rearrangement on the left hand side is with respect to the
Lebesgue measure).

\item  Oscillation inequality: for all functions $f$ $\in$ $Lip(\Omega),$%
\begin{equation}
(f_{\mu}^{\ast\ast}(t)-f_{\mu}^{\ast}(t))\leq\frac{t}{I(t)}\left|  \nabla
f\right|  _{\mu}^{\ast\ast}(t). \label{rea}%
\end{equation}
\end{enumerate}
\end{theorem}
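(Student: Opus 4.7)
My plan is to prove the five assertions via the cycle $(1)\Rightarrow(2)\Rightarrow(3)\Rightarrow(4)\Rightarrow(5)\Rightarrow(1)$. The implication $(1)\Rightarrow(2)$ is immediate from the co-area formula
\[
\int_{\Omega}|\nabla f|\,d\mu=\int_{0}^{\infty}\mu^{+}(\{|f|>s\})\,ds
\]
combined with the isoperimetric estimator hypothesis applied slice by slice. For the closing step $(5)\Rightarrow(1)$, I would apply (5) to the Lipschitz approximations $f_{\varepsilon}(x):=\max\{1-\tfrac{1}{\varepsilon}d(x,A),0\}$ of $\chi_{A}$, whose gradient integrals tend to $\mu^{+}(A)$ as $\varepsilon\downarrow 0$; at $t=\mu(A)$ the left hand side of (5) tends to $1$ and the right hand side to $\mu^{+}(A)/I(\mu(A))$, giving $\mu^{+}(A)\geq I(\mu(A))$.

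The heart of the matter is $(2)\Rightarrow(3)$, which I carry out by Maz'ya's device of smooth truncation. Given $0<t_{1}<t_{2}$, apply (2) to the truncated Lipschitz function
\[
f_{t_{1},t_{2}}(x):=\min\{(|f(x)|-f_{\mu}^{\ast}(t_{2}))_{+},\,f_{\mu}^{\ast}(t_{1})-f_{\mu}^{\ast}(t_{2})\}.
\]
Its distribution function is $\mu_{f_{t_{1},t_{2}}}(s)=\mu_{f}(f_{\mu}^{\ast}(t_{2})+s)$ on $(0,f_{\mu}^{\ast}(t_{1})-f_{\mu}^{\ast}(t_{2}))$, so the change of variable $u=f_{\mu}^{\ast}(v)$ converts the left hand side of (2) into $\int_{t_{1}}^{t_{2}}I(v)(-f_{\mu}^{\ast})^{\prime}(v)\,dv$, while the right hand side, using Condition 2 to see that $|\nabla f_{t_{1},t_{2}}|=|\nabla f|$ a.e.\ on $\{f_{\mu}^{\ast}(t_{2})<|f|\leq f_{\mu}^{\ast}(t_{1})\}$ and vanishes elsewhere, equals $\int_{\{f_{\mu}^{\ast}(t_{2})<|f|\leq f_{\mu}^{\ast}(t_{1})\}}|\nabla f|\,d\mu$. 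Letting $t_{1}\downarrow 0$ and differentiating at $t_{2}=s$ yields (3).

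For $(3)\Rightarrow(4)$, set $h(s):=(-f_{\mu}^{\ast})^{\prime}(s)I(s)$ and $G(s):=\int_{\{|f|>f_{\mu}^{\ast}(s)\}}|\nabla f|\,d\mu$; then (3) reads $h(s)\leq G^{\prime}(s)$ a.e., so $h^{\ast}\leq(G^{\prime})^{\ast}$ pointwise. Writing any measurable $E\subset(0,1)$ of Lebesgue measure at most $t$ as a countable disjoint union of intervals $\bigsqcup_{k}(a_{k},b_{k})$, the sets $\{f_{\mu}^{\ast}(b_{k})<|f|\leq f_{\mu}^{\ast}(a_{k})\}$ are pairwise disjoint with total $\mu$-measure at most $\sum_{k}(b_{k}-a_{k})\leq t$, so the Hardy--Littlewood maximal property (\ref{hp}) yields
\[
\int_{E}G^{\prime}(s)\,ds=\sum_{k}[G(b_{k})-G(a_{k})]\leq\int_{0}^{t}|\nabla f|_{\mu}^{\ast}(s)\,ds.
\]
Taking the supremum over such $E$ gives $\int_{0}^{t}(G^{\prime})^{\ast}(s)\,ds\leq\int_{0}^{t}|\nabla f|_{\mu}^{\ast}(s)\,ds$, which combined with $h^{\ast}\leq(G^{\prime})^{\ast}$ produces (4). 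The passage $(4)\Rightarrow(5)$ is then a brief computation: Fubini gives $t(f_{\mu}^{\ast\ast}(t)-f_{\mu}^{\ast}(t))=\int_{0}^{t}s(-f_{\mu}^{\ast})^{\prime}(s)\,ds=\int_{0}^{t}(s/I(s))h(s)\,ds$, and since $I$ is concave with $I(0)=0$ the weight $s/I(s)$ is non-decreasing, hence its decreasing rearrangement on $(0,t)$ is bounded by $t/I(t)$; the Hardy--Littlewood inequality followed by (4) then yields (5).

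The main technical obstacle is justifying the absolute continuity needed for the change of variables and differentiation in the Maz'ya truncation step, and interpreting $G^{\prime}$ as a genuine $L^{1}$ derivative in $(3)\Rightarrow(4)$. This is where Condition 2 does the essential work: it ensures the chain rule for the truncation, the absolute continuity of $G$, and the vanishing of the boundary terms as $t_{1}\downarrow 0$.
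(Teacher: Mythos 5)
Your cyclic route $(1)\Rightarrow(2)\Rightarrow(3)\Rightarrow(4)\Rightarrow(5)\Rightarrow(1)$ is organizationally cleaner than the paper's tree (the paper proves $(2)\Rightarrow(3)$ and $(2)\Rightarrow(4)$ separately, then $(3)\Rightarrow(5)$ and $(4)\Rightarrow(5)$). Your $(3)\Rightarrow(4)$ leg is a genuinely different and attractive step: passing from the pointwise bound $(-f_{\mu}^{\ast})'I\leq G'$ with $G(s)=\int_{\{|f|>f_{\mu}^{\ast}(s)\}}|\nabla f|\,d\mu$ to the rearranged bound via the sup-over-sets characterization (\ref{hp}) and monotonicity of $G$ avoids having to re-run the truncation identity that the paper uses to get $(2)\Rightarrow(4)$ directly, and I believe it works once you replace ``any measurable $E$ is a disjoint countable union of intervals'' (false) by an open cover with measure at most $t+\delta$, and replace the equality $\int_E G'=\sum_k[G(b_k)-G(a_k)]$ by the inequality $\leq$, which is all you need and holds for any increasing $G$ without absolute continuity. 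The $(4)\Rightarrow(5)$ computation via Hardy--Littlewood with the increasing weight $s/I(s)$ is essentially what the paper does.

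The genuine gap is the claim that Condition 2 supplies the absolute continuity of $f_{\mu}^{\ast}$. It does not. Condition 2 (vanishing of $|\nabla f|$ a.e.\ on level sets) lets you pass freely between strict and non-strict inequalities in sets of the form $\{f_{\mu}^{\ast}(b)<|f|\leq f_{\mu}^{\ast}(a)\}$, but it says nothing about the total variation of $f_{\mu}^{\ast}$ on sets of small Lebesgue measure, which is what local absolute continuity requires. That absolute continuity is the load-bearing fact both for the change of variable $u=f_{\mu}^{\ast}(v)$ in your $(2)\Rightarrow(3)$ and for the equality $t(f_{\mu}^{\ast\ast}(t)-f_{\mu}^{\ast}(t))=\int_{0}^{t}s(-f_{\mu}^{\ast})'(s)\,ds$ in $(4)\Rightarrow(5)$; without it both steps fail (for a merely decreasing $f_{\mu}^{\ast}$ the latter equality degrades to $\geq$, the wrong direction). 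The paper derives absolute continuity from the quantitative truncation estimate
\[
\bigl(f_{\mu}^{\ast}(s)-f_{\mu}^{\ast}(s+h)\bigr)\min\{I(s),I(s+h)\}\leq\int_{\{f_{\mu}^{\ast}(s+h)<|f|<f_{\mu}^{\ast}(s)\}}\bigl|\nabla|f|\bigr|\,d\mu,
\]
obtained by applying Ledoux's inequality to the truncation at levels $f_{\mu}^{\ast}(s+h),f_{\mu}^{\ast}(s)$, and then summing over finite disjoint families of intervals in $[a,b]\subset(0,1)$. You should make this step explicit in your $(2)\Rightarrow(3)$ rather than delegating it to Condition 2. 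Finally, in $(5)\Rightarrow(1)$ you cannot evaluate directly at $t=\mu(A)$: the rearrangements $(f_{\varepsilon})_{\mu}^{\ast}(t)\to(\chi_A)_{\mu}^{\ast}(t)$ only at points of continuity, and $t=\mu(A)$ is precisely the jump. Take $t>\mu(A)$ to obtain $\tfrac{\mu(A)}{t}I(t)\leq\mu^{+}(A)$, and then let $t\downarrow\mu(A)$ using continuity of $I$.
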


\begin{proof}
$\mathbf{(1)\Rightarrow(2).}$ Note that $f\in Lip(\Omega)$ implies that
$\left|  f\right|  \in Lip(\Omega),$ and, moreover, we have (cf.
(\ref{vista}))%
\[
\left|  \nabla f(x)\right|  \geq\left|  \nabla\left|  f\right|  (x)\right|  .
\]
By the co-area inequality applied to $\left|  f\right|  $ (cf. \cite[Lemma
3.1]{BH}), and the isoperimetric inequality (\ref{isop}), it follows that
\begin{align*}
\int_{\Omega}\left|  \nabla f(x)\right|  d\mu &  \geq\int_{\Omega}\left|
\nabla\left|  f\right|  (x)\right|  d\mu\geq\int_{0}^{\infty}\mu^{+}(\{\left|
f\right|  >s\})ds\\
&  \geq\int_{0}^{\infty}I(\mu_{f}(s))ds\text{ }.
\end{align*}
$\mathbf{(2)\Rightarrow(3).}$ Let $0<t_{1}<t_{2}<\infty.$ The smooth
truncations of $f$ are defined by
\[
f_{t_{1}}^{t_{2}}(x)=\left\{
\begin{array}
[c]{ll}%
t_{2}-t_{1} & \text{if }\left|  f(x)\right|  \geq t_{2},\\
\left|  f(x)\right|  -t_{1} & \text{if }t_{1}<\left|  f(x)\right|  <t_{2},\\
0 & \text{if }\left|  f(x)\right|  \leq t_{1}.
\end{array}
\right.
\]
Applying (\ref{ledo}) to $f_{t_{1}}^{t_{2}}$ we obtain,
\[
\int_{0}^{\infty}I(\mu_{f_{t_{1}}^{t_{2}}}(s))ds\leq\int_{\Omega}\left|
\nabla f_{t_{1}}^{t_{2}}(x)\right|  d\mu.
\]
We have (cf. \cite{haj})
\[
\left|  \nabla f_{t_{1}}^{t_{2}}\right|  =\left|  \nabla\left|  f_{t_{1}%
}^{t_{2}}\right|  \right|  =\left|  \nabla\left|  f\right|  \right|
\chi_{\left\{  t_{1}<\left|  f\right|  <t_{2}\right\}  },
\]
and, moreover,
\begin{equation}
\int_{0}^{\infty}I(\mu_{f_{t_{1}}^{t_{2}}}(s))ds=\int_{0}^{t_{2}-t_{1}}%
I(\mu_{f_{t_{1}}^{t_{2}}}(s))ds. \label{A1}%
\end{equation}
Observe that, for $0<s<t_{2}-t_{1}$,%
\[
\mu\left\{  \left|  f\right|  \geq t_{2}\right\}  \leq\mu_{f_{t_{1}}^{t_{2}}%
}(s)\leq\mu\left\{  \left|  f\right|  >t_{1}\right\}  .
\]
Consequently, by the properties of $I$, we have%
\[
\int_{0}^{t_{2}-t_{1}}I(\mu_{f_{t_{1}}^{t_{2}}}(s))ds\geq(t_{2}-t_{1}%
)\min\{I(\mu\left\{  \left|  f\right|  \geq t_{2}\right\}  ),I(\mu\left\{
\left|  f\right|  >t_{1}\right\}  )\}.
\]
Let us see that $f_{\mu}^{\ast}$ is locally absolutely continuous. Indeed, for
$s>0$ and $h>0,$ pick $t_{1}=f_{\mu}^{\ast}(s+h),$ $t_{2}=f_{\mu}^{\ast}(s),$
then
\begin{equation}
s\leq\mu\left\{  \left|  f(x)\right|  \geq f_{\mu}^{\ast}(s)\right\}  \leq
\mu_{f_{t_{1}}^{t_{2}}}(s)\leq\mu\left\{  \left|  f(x)\right|  >f_{\mu}^{\ast
}(s+h)\right\}  \leq s+h. \label{A2}%
\end{equation}
Combining (\ref{A1}) and (\ref{A2}) we have,
\begin{equation}
\left(  f_{\mu}^{\ast}(s)-f_{\mu}^{\ast}(s+h)\right)  \min\{I(s+h),I(s)\}\leq
\int_{\left\{  f_{\mu}^{\ast}(s+h)<\left|  f\right|  <f_{\mu}^{\ast
}(s)\right\}  }\left|  \nabla\left|  f\right|  (x)\right|  d\mu
\label{truncation}%
\end{equation}
which implies that $f_{\mu}^{\ast}$ is absolutely continuous in $[a,b]$
($0<a<b<1).$ Indeed, for any finite family of non-overlapping intervals
$\{\left(  a_{k},b_{k}\right)  \}_{k=1}^{r},$ with $\left(  a_{k}%
,b_{k}\right)  \subset\lbrack a,b],$ and, $\sum_{k=1}^{r}\left(  b_{k}%
-a_{k}\right)  \leq\delta,$ we have%
\[
\mu\left\{  \cup_{k=1}^{r}\left\{  f_{\mu}^{\ast}(b_{k})<\left|  f\right|
<f_{\mu}^{\ast}(a_{k})\right\}  \right\}  =\sum_{k=1}^{r}\mu\left\{  f_{\mu
}^{\ast}(b_{k})<\left|  f\right|  <f_{\mu}^{\ast}(a_{k})\right\}  \leq
\sum_{k=1}^{r}\left(  b_{k}-a_{k}\right)  \leq\delta.
\]
Therefore, combining this fact with (\ref{truncation}), we have%
\begin{align*}
\sum_{k=1}^{r}\left(  f_{\mu}^{\ast}(a_{k})-f_{\mu}^{\ast}(b_{k})\right)
\min\{I(a),I(b)\}  &  \leq\sum_{k=1}^{r}\left(  f_{\mu}^{\ast}(a_{k})-f_{\mu
}^{\ast}(b_{k})\right)  \min\{I(a_{k}),I(b_{k})\}\\
&  \leq\sum_{k=1}^{r}\int_{\left\{  f_{\mu}^{\ast}(b_{k})<\left|  f\right|
<f_{\mu}^{\ast}(a_{k})\right\}  }\left|  \nabla\left|  f\right|  (x)\right|
d\mu\\
&  =\int_{\cup_{k=1}^{r}\left\{  f_{\mu}^{\ast}(b_{k})<\left|  f\right|
<f_{\mu}^{\ast}(a_{k})\right\}  }\left|  \nabla\left|  f\right|  (x)\right|
d\mu\\
&  \leq\int_{0}^{\delta}\left|  \nabla\left|  f\right|  \right|  _{\mu}^{\ast
}(t)dt\\
&  \leq\int_{0}^{\delta}\left|  \nabla f\right|  _{\mu}^{\ast}(t)dt.
\end{align*}

The local absolute continuity follows.

Finally, using (\ref{truncation}) again we get,
\begin{align*}
\frac{\left(  f_{\mu}^{\ast}(s)-f_{\mu}^{\ast}(s+h)\right)  }{h}%
\min(I(s+h),I(s))  &  \leq\int_{\left\{  f_{\mu}^{\ast}(s+h)<\left|  f\right|
<f_{\mu}^{\ast}(s)\right\}  }\left|  \nabla\left|  f\right|  (x)\right|
d\mu\\
&  \leq\frac{1}{h}\int_{\left\{  f_{\mu}^{\ast}(s+h)<\left|  f\right|  \leq
f_{\mu}^{\ast}(s)\right\}  }\left|  \nabla\left|  f\right|  (x)\right|  d\mu\\
&  \leq\frac{1}{h}\int_{\left\{  f_{\mu}^{\ast}(s+h)<\left|  f\right|  \leq
f_{\mu}^{\ast}(s)\right\}  }\left|  \nabla f(x)\right|  d\mu.
\end{align*}
Letting $h\rightarrow0$ we obtain (\ref{dosa}).

$\mathbf{(2)\Rightarrow(4).}$ As before, the truncation argument shows that%
\[
\int_{0}^{t_{2}-t_{1}}I(\mu_{f_{t_{1}}^{t_{2}}}(s))ds\leq\int_{\left\{
t_{1}<\left|  f\right|  <t_{2}\right\}  }\left|  \nabla\left|  f\right|
\right|  \chi_{\left\{  t_{1}<\left|  f\right|  <t_{2}\right\}  }d\mu.
\]
Observe that for $0<s<t_{2}-t_{1}$
\[
\mu_{f_{t_{1}}^{t_{2}}}(s)=\mu\left\{  \left|  f\right|  >t_{1}+s\right\}
=\mu_{f}(t_{1}+s),
\]
thus%
\[
\int_{0}^{t_{2}-t_{1}}I(\mu_{f_{t_{1}}^{t_{2}}}(s))ds=\int_{t_{1}}^{t_{2}%
}I(\mu_{f}(s))ds.
\]
We have seen in the proof of $[(2)\Rightarrow(3)]$ that $f_{\mu}^{\ast}$ is
absolutely continuous.$\ $Therefore we get%
\begin{equation}
\int_{t_{1}}^{t_{2}}I(\mu_{f}(s))ds=\int_{\mu_{f}(t_{2})}^{\mu_{f}(t_{1}%
)}I(\mu_{f}(f_{\mu}^{\ast}(s)))\left(  -f_{\mu}^{\ast}\right)  ^{\prime}(s)ds.
\label{previo}%
\end{equation}
Let $m$ be the Lebesgue on $[0,\infty),$ then (see \cite[Lemma 1, pag. 84]%
{Cw})
\begin{equation}
s-m\left\{  r\in(0,\infty):f_{\mu}^{\ast}(r)=f_{\mu}^{\ast}(s)\right\}  \leq
m_{f_{\mu}^{\ast}}(f_{\mu}^{\ast}(s))\leq s. \label{cwi}%
\end{equation}
Recall that since $f$ and $f_{\mu}^{\ast}$ are equimeasurable,
\[
\mu_{f}(s)=m_{f_{\mu}^{\ast}}(s),\text{ for all }s\geq0.
\]
Inserting this in (\ref{cwi}) we find
\[
s-m\left\{  r\in(0,\infty):f_{\mu}^{\ast}(r)=f_{\mu}^{\ast}(s)\right\}
\leq\mu_{f}(f_{\mu}^{\ast}(s))\leq s.
\]
It follows that $\mu_{f}(f_{\mu}^{\ast}(s))=s,$ unless $s$ belongs to an
interval where $f_{\mu}^{\ast}$ is constant, in which case $\left(  f_{\mu
}^{\ast}\right)  ^{\prime}=0.$ Therefore, if we set $t_{1}=f_{\mu}^{\ast}(a)$
and $t_{2}=f_{\mu}^{\ast}(b)$ ($a<b)$ in (\ref{previo}), we obtain
\begin{align}
\int_{f_{\mu}^{\ast}(a)}^{f_{\mu}^{\ast}(b)}I(\mu_{f}(s))ds  &  =\int_{\mu
_{f}(f_{\mu}^{\ast}(a))}^{\mu_{f}(f_{\mu}^{\ast}(b))}I(\mu_{f}(f_{\mu}^{\ast
}(s)))\left(  -f_{\mu}^{\ast}\right)  ^{\prime}(s)ds\nonumber\\
&  =\int_{a}^{b}I(s)\left(  -f_{\mu}^{\ast}\right)  ^{\prime}(s)ds.
\label{salvado}%
\end{align}
Consider a finite family of intervals $\left(  a_{i},b_{i}\right)  ,$
$i=1,\ldots,k$, with $0<a_{1}<b_{1}\leq a_{2}<b_{2}\leq\cdots\leq a_{k}%
<b_{k}<1.$ Then,
\begin{align*}
\int_{\cup_{1\leq i\leq k}(a_{i},b_{i})}\left(  -f_{\mu}^{\ast}\right)
^{^{\prime}}(s)I(s)ds  &  =\sum_{i=1}^{k}\int_{f_{\mu}^{\ast}(a_{i})}^{f_{\mu
}^{\ast}(b_{i})}I(\mu_{f}(s))ds\text{ \ \ (by (\ref{salvado}))}\\
&  \leq\sum_{i=1}^{k}\int_{\left\{  f_{\mu}^{\ast}(b_{i})<\left|  f\right|
<f_{\mu}^{\ast}(a_{i})\right\}  }\left|  \nabla\left|  f\right|  (x)\right|
d\mu\\
&  =\int_{\cup_{1\leq i\leq k}\left\{  f_{\mu}^{\ast}(b_{i})<\left|  f\right|
<f_{\mu}^{\ast}(a_{i})\right\}  }\left|  \nabla\left|  f\right|  (x)\right|
d\mu\\
&  \leq\int_{0}^{\sum_{i=1}^{k}\left(  b_{i}-a_{i}\right)  }\left|
\nabla\left|  f\right|  \right|  _{\mu}^{\ast}(s)ds\\
&  \leq\int_{0}^{\sum_{i=1}^{k}\left(  b_{i}-a_{i}\right)  }\left|  \nabla
f\right|  _{\mu}^{\ast}(s)ds.
\end{align*}
Now, by a routine limiting process we can show that, for any measurable set
$E\subset$ $(0,1),$ we have
\[
\int_{E}(-f_{\mu}^{\ast})^{\prime}(s)I(s)ds\leq\int_{0}^{m(E)}\left|  \nabla
f\right|  _{\mu}^{\ast}(s)ds.
\]
Therefore,%
\begin{align*}
\sup_{m(E)\leq t}\int_{E}(-f_{\mu}^{\ast})^{\prime}(s)I(s)ds  &  \leq
\sup_{m(E)\leq t}\int_{0}^{m(E)}\left|  \nabla f\right|  _{\mu}^{\ast}(s)ds\\
&  =\int_{0}^{t}\left|  \nabla f\right|  _{\mu}^{\ast}(s)ds.
\end{align*}
Consequently by (\ref{hp}) we get%
\[
\int_{0}^{t}((-f_{\mu}^{\ast})^{\prime}(\cdot)I(\cdot))^{\ast}(s)ds\leq
\int_{0}^{t}\left|  \nabla f\right|  _{\mu}^{\ast}(s)ds.
\]

$\mathbf{(3)\Rightarrow(5).}$ We will integrate by parts. Let us note first
that using (\ref{truncation}) we have that, for $0<s<t,$
\begin{equation}
s\left(  f_{\mu}^{\ast}(s)-f_{\mu}^{\ast}(t\right)  )\leq\frac{s}%
{\min\{I(s),I(t)\}}\int_{0}^{t-s}\left|  \nabla\left|  f\right|  \right|
_{\mu}^{\ast}(s)ds. \label{boca}%
\end{equation}
Now, using (\ref{boca}) we see that $\lim_{s\rightarrow0}s\left(  f_{\mu
}^{\ast}(s)-f_{\mu}^{\ast}(t\right)  )<\infty.$ Therefore,%
\begin{align}
f_{\mu}^{\ast\ast}(t)-f_{\mu}^{\ast}(t)  &  =\frac{1}{t}\int_{0}^{t}\left(
f_{\mu}^{\ast}(s)-f_{\mu}^{\ast}(t)\right)  ds\nonumber\\
&  =\frac{1}{t}\left\{  \left[  s\left(  f_{\mu}^{\ast}(s)-f_{\mu}^{\ast
}(t)\right)  \right]  _{0}^{t}+\int_{0}^{t}s\left(  -f_{\mu}^{\ast}\right)
^{\prime}(s)ds\right\} \nonumber\\
&  \leq\frac{1}{t}\int_{0}^{t}s\left(  -f_{\mu}^{\ast}\right)  ^{\prime
}(s)ds\nonumber\\
&  =A(t).\nonumber
\end{align}
Since $s/I(s)$ is increasing on $0<s<1$, we get
\begin{align*}
A(t)  &  \leq\frac{1}{I(t)}\int_{0}^{t}I(s)\left(  -f_{\mu}^{\ast}\right)
^{\prime}(s)ds\\
&  \leq\frac{1}{I(t)}\int_{0}^{t}\left(  \frac{\partial}{\partial s}%
\int_{\left\{  \left|  f\right|  >f_{\mu}^{\ast}(s)\right\}  }\left|  \nabla
f(x)\right|  d\mu\right)  ds\text{ (by (\ref{dosa}))}\\
&  \leq\frac{1}{I(t)}\int_{\left\{  \left|  f\right|  >f_{\mu}^{\ast
}(t)\right\}  }\left|  \nabla f(x)\right|  d\mu\text{ }\\
&  \leq\frac{t}{I(t)}\left|  \nabla f\right|  _{\mu}^{\ast\ast}(t).
\end{align*}

$\mathbf{(4)\Rightarrow(5)}.$ Once again we use integration by parts. We now
show that under our current assumptions (\ref{boca}) still holds. Let $0<s<t.$
Since $I$ increases on $(0,1/2),$ and is symmetric about $1/2,$ we have%
\[
\left(  f_{\mu}^{\ast}(s)-f_{\mu}^{\ast}(t\right)  )\min\{I(t),I(s)\}\leq
\int_{s}^{t}(-f_{\mu}^{\ast})^{\prime}(z)I(z)dz.
\]
Therefore, by the basic properties of rearrangements,
\begin{align*}
\left(  f_{\mu}^{\ast}(s)-f_{\mu}^{\ast}(t\right)  )\min\{I(t),I(s)\}  &
\leq\int_{0}^{t-s}((-f_{\mu}^{\ast})^{\prime}(.)I(.))^{\ast}(z)dz\\
&  \leq\int_{0}^{t-s}\left|  \nabla f\right|  _{\mu}^{\ast}(z)dz.
\end{align*}
Thus, once again we have
\begin{equation}
s\left(  f_{\mu}^{\ast}(s)-f_{\mu}^{\ast}(t\right)  )\leq\frac{s}%
{\min\{I(t),I(s)\}}\int_{0}^{t-s}\left|  \nabla\left|  f\right|  \right|
_{\mu}^{\ast}(z)dz. \label{boca01}%
\end{equation}
Therefore proceeding as before we find%
\begin{align*}
f_{\mu}^{\ast\ast}(t)-f_{\mu}^{\ast}(t)  &  \leq\frac{1}{t}\int_{0}%
^{t}s\left(  -f_{\mu}^{\ast}\right)  ^{^{\prime}}(s)ds\\
&  \leq\frac{1}{I(t)}\int_{0}^{t}I(s)\left(  -f_{\mu}^{\ast}\right)  ^{\prime
}(s)ds\\
&  \leq\frac{1}{I(t)}\int_{0}^{t}(\left(  -f_{\mu}^{\ast}\right)  ^{\prime
}(.)I(.))^{\ast}(s)ds\text{,}%
\end{align*}
where in the last step we used a basic property of the decreasing
rearrangement. Combining the last estimate with (\ref{provadas}) we find that%
\[
f_{\mu}^{\ast\ast}(t)-f_{\mu}^{\ast}(t)\leq\frac{t}{I(t)}\left|  \nabla
f\right|  _{\mu}^{\ast\ast}(t),
\]
as we wished to show.

$\mathbf{(5)\Rightarrow(1).}$ Let $A$ be a Borel set with $0<\mu(A)<1.$ We may
assume, without loss, that $\mu^{+}(A)<\infty.$ By \cite[Lemma 3.7]{BH} we can
select a sequence $\{f_{n}\}_{n\in N}$ of Lip functions such that
$f_{n}\underset{L^{1}}{\rightarrow}\chi_{A}$, and%
\[
\mu^{+}(A)\geq\lim\sup_{n\rightarrow\infty}\left\|  \left|  \nabla
f_{n}\right|  \right\|  _{L^{1}}.
\]
Therefore,%
\begin{align}
\lim\sup_{n\rightarrow\infty}I(t)(\left(  f_{n}\right)  _{\mu}^{\ast\ast
}(t)-\left(  f_{n}\right)  _{\mu}^{\ast}(t))  &  \leq\lim\sup_{n\rightarrow
\infty}\int_{0}^{t}\left|  \nabla f_{n}(s)\right|  _{\mu}^{\ast}%
ds\label{bbb}\\
&  \leq\lim\sup_{n\rightarrow\infty}\int_{\Omega}\left|  \nabla f_{n}\right|
d\mu\nonumber\\
&  \leq\mu^{+}(A).\nonumber
\end{align}
As is well known, $f_{n}\underset{L^{1}}{\rightarrow}\chi_{A}$ implies that
(cf. \cite[Lemma 2.1]{gar}):
\begin{align*}
\left(  f_{n}\right)  _{\mu}^{\ast\ast}(t)\rightarrow\left(  \chi_{A}\right)
_{\mu}^{\ast\ast}(t)  &  ,\text{ uniformly for }t\in\lbrack0,1]\text{, and }\\
\left(  f_{n}\right)  _{\mu}^{\ast}(t)\rightarrow\left(  \chi_{A}\right)
_{\mu}^{\ast}(t)\text{ }  &  \text{at all points of continuity of }\left(
\chi_{A}\right)  _{\mu}^{\ast}.
\end{align*}
Let $r=\mu(A),$ and observe that $\left(  \chi_{A}\right)  _{\mu}^{\ast\ast
}(t)=\min\{1,\frac{r}{t}\},$ then, we deduce that for all $t>r,$ $\left(
f_{n}\right)  _{\mu}^{\ast\ast}(t)\rightarrow\frac{r}{t},$ and $\left(
f_{n}\right)  _{\mu}^{\ast}(t)\rightarrow\left(  \chi_{A}\right)  _{\mu}%
^{\ast}(t)=\chi_{(0,r)}(t)=0.$ Inserting this information back in (\ref{bbb}),
we get%
\[
\frac{r}{t}I(t)\leq\mu^{+}(A),\;\forall t>r.
\]
Now, since $I(t)$ is continuous, we may let $t\rightarrow r$ and we find that
\[
I(\mu(A))\leq\mu^{+}(A),
\]
as we wished to show.
\end{proof}

\begin{remark}
In connection with inequality (\ref{ledo}) see also Remark \ref{remarkao1} below.
\end{remark}

\begin{proposition}
\label{l1l1}Let $I:[0,1]\rightarrow\left[  0,\infty\right)  $ be an
isoperimetric estimator on $(\Omega,d,\mu).$ Suppose that there exists a
constant $c>0$ such that
\begin{equation}
\int_{t}^{1}\frac{I(s)}{s}\frac{ds}{s}\leq c\frac{I(t)}{t},\;\;t\in(0,1).
\label{concon}%
\end{equation}
Then, for all $f\in Lip(\Omega),$
\begin{equation}
\int_{0}^{t}\left(  \frac{I(\cdot)}{\left(  \cdot\right)  }[f_{\mu}^{\ast\ast
}(\cdot)-f_{\mu}^{\ast}(\cdot)]\right)  ^{\ast}ds\leq4c\int_{0}^{t}\left|
\nabla f\right|  _{\mu}^{\ast}(s)ds. \label{l1}%
\end{equation}
\end{proposition}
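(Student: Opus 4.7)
The plan is to prove the two-step chain
\[
\int_{0}^{t}g^{\ast}(s)\,ds\;\leq\;4c\int_{0}^{t}h^{\ast}(s)\,ds\;\leq\;4c\int_{0}^{t}|\nabla f|_{\mu}^{\ast}(s)\,ds,
\]
where $g(t):=\frac{I(t)}{t}\bigl(f_{\mu}^{\ast\ast}(t)-f_{\mu}^{\ast}(t)\bigr)$ and $h(t):=I(t)(-f_{\mu}^{\ast})^{\prime}(t)$. The second inequality is precisely the P\'{o}lya--Szeg\"{o} bound (\ref{provadas}) from Theorem \ref{teomain}, so the new work is entirely the first inequality. The crucial reformulation is the identity
\[
g(t)=\frac{I(t)}{t^{2}}\int_{0}^{t}\frac{s}{I(s)}\,h(s)\,ds,
\]
obtained by integration by parts starting from $t\bigl(f_{\mu}^{\ast\ast}(t)-f_{\mu}^{\ast}(t)\bigr)=\int_{0}^{t}s(-f_{\mu}^{\ast})^{\prime}(s)\,ds$; the absolute continuity of $f_{\mu}^{\ast}$ needed to justify this has already been established in the proof of Theorem \ref{teomain}.

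Next, I would proceed by duality: since $\int_{0}^{t}g^{\ast}(s)\,ds=\sup_{m(E)\leq t}\int_{E}g(r)\,dr$, it suffices to estimate $\int_{E}g(r)\,dr$ uniformly over measurable $E\subset(0,1)$ with $m(E)\leq t$. Fubini's theorem applied to the above identity gives
\[
\int_{E}g(r)\,dr=\int_{0}^{1}\alpha_{E}(s)\,h(s)\,ds,\qquad\alpha_{E}(s):=\frac{s}{I(s)}\int_{E\cap(s,1)}\frac{I(r)}{r^{2}}\,dr.
\]
Two properties of $\alpha_{E}$ are decisive: (i) $\alpha_{E}(s)\leq c$, which is immediate from the hypothesis (\ref{concon}); and (ii) $\int_{0}^{1}\alpha_{E}(s)\,ds\leq m(E)$, which follows from a second Fubini together with $\int_{0}^{r}\frac{s}{I(s)}\,ds\leq\frac{r^{2}}{I(r)}$ (valid because $s/I(s)$ is increasing, as $I$ is concave with $I(0)=0$). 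Applying the Hardy--Littlewood rearrangement inequality yields $\int\alpha_{E}h\leq\int\alpha_{E}^{\ast}h^{\ast}$, and the constraints on $\alpha_{E}^{\ast}$ (decreasing, bounded by $c$, total mass $\leq t$) together with $h^{\ast}$ decreasing produce $\int\alpha_{E}^{\ast}h^{\ast}\leq4c\int_{0}^{t}h^{\ast}(s)\,ds$ via comparison with the extremal profile $c\,\chi_{(0,\min(1,t/c))}$.

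The main obstacle I expect is the combination of (i) and (ii): using only $\alpha_{E}\leq c$ would give the useless bound $c\int_{0}^{1}h^{\ast}$, while using only $\int\alpha_{E}\leq m(E)$ would fail to localize to where $h^{\ast}$ is largest. The hypothesis (\ref{concon}) is precisely what forces the pointwise bound to mesh with the integral bound, so that the majorization step can concentrate $\alpha_{E}^{\ast}$ near the origin. The loose factor $4$ in $4c$ is a slack introduced by handling the cases $c\leq1$ and $c>1$ in one stroke (a more careful analysis gives $\max(c,1)$); once the bound $\int_{E}g\leq 4c\int_{0}^{t}h^{\ast}$ is established, supremum over $E$ and then (\ref{provadas}) complete the proof.
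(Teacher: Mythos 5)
Your argument is correct in outline and takes a genuinely different route from the paper's. The paper first proves the unrearranged inequality $\int_{0}^{t}(f_{\mu}^{\ast\ast}-f_{\mu}^{\ast})\frac{I}{s}\,ds\leq c\int_{0}^{t}|\nabla f|_{\mu}^{\ast}\,ds$ (via essentially the same Fubini exchange with the kernel $I(s)/s^{2}$ that you use) and then invokes an external result, Lemma 2 of \cite{mmpote}, together with (\ref{rea}), to upgrade it to the rearranged form (\ref{l1}); your proof attacks the rearranged statement directly by dualizing over sets of measure $\leq t$ via (\ref{hp}), controlling the auxiliary kernel $\alpha_{E}$ both pointwise (from (\ref{concon})) and in $L^{1}$ (from monotonicity of $s/I(s)$), and finishing with Hardy--Littlewood and a bathtub comparison. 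This has the advantage of being self-contained, replacing the appeal to \cite{mmpote} with an explicit majorization. Two small repairs are worth noting. First, the displayed ``identity'' $t\bigl(f_{\mu}^{\ast\ast}(t)-f_{\mu}^{\ast}(t)\bigr)=\int_{0}^{t}s(-f_{\mu}^{\ast})^{\prime}(s)\,ds$ is only an inequality ($\leq$): the boundary term $\lim_{s\to0}s\bigl(f_{\mu}^{\ast}(s)-f_{\mu}^{\ast}(t)\bigr)$ is nonnegative but need not vanish. The direction is still the one you want, so the argument survives, but state it as an inequality. Second, your careful analysis gives the bound $\max(c,1)\int_{0}^{t}h^{\ast}$, and for this to be $\leq 4c\int_{0}^{t}h^{\ast}$ when $c<1$ you need $c\geq 1/4$; this is in fact automatic (taking $t<1/4$ in (\ref{concon}) and using that $I$ increases on $(0,1/2)$ gives $\int_{t}^{2t}I(s)s^{-2}\,ds\geq I(t)/(2t)$, hence $c\geq 1/2$), but you should record the observation rather than dismiss the constant as pure slack.
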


\begin{proof}
We will first show that%
\begin{equation}
\int_{0}^{t}(f_{\mu}^{\ast\ast}(s)-f_{\mu}^{\ast}(s))\frac{I(s)}{s}ds\leq
c\int_{0}^{t}\left|  \nabla f\right|  _{\mu}^{\ast}(s)ds. \label{aaa}%
\end{equation}
As we have seen before
\[
t(f_{\mu}^{\ast\ast}(t)-f_{\mu}^{\ast}(t))\leq\int_{0}^{t}s\left(  -f_{\mu
}^{\ast}\right)  ^{\prime}(s)ds.
\]
Therefore, the left hand side of (\ref{aaa}) is controlled by
\[
B(t)=\int_{0}^{t}\left(  \int_{0}^{s}x\left(  -f_{\mu}^{\ast}\right)
^{\prime}(x)dx\right)  \frac{I(s)}{s^{2}}ds.
\]
Using our current assumptions and Fubini's theorem, we find%
\begin{align*}
B(t)  &  =\int_{0}^{t}x\left(  -f_{\mu}^{\ast}\right)  ^{\prime}(x)\int
_{x}^{t}\frac{I(s)}{s^{2}}dsdx\\
&  \leq\int_{0}^{t}x\left(  -f_{\mu}^{\ast}\right)  ^{\prime}(x)\int_{x}%
^{1}\frac{I(s)}{s^{2}}dsdx\\
&  \leq c\int_{0}^{t}x\left(  -f_{\mu}^{\ast}\right)  ^{\prime}(x)\frac
{I(x)}{x}dx\\
&  \leq c\int_{0}^{t}((-f_{\mu}^{\ast})^{\prime}(.)I(.))^{\ast}(s)ds\\
&  \leq c\int_{0}^{t}\left|  \nabla f\right|  _{\mu}^{\ast}(s)ds\text{ \ \ (by
(\ref{provadas})).}%
\end{align*}
The proof of (\ref{aaa}) is complete. By Theorem \ref{teomain} we also have
\[
(f_{\mu}^{\ast\ast}(t)-f_{\mu}^{\ast}(t))\leq\frac{t}{I(t)}\left|  \nabla
f\right|  _{\mu}^{\ast\ast}(t).
\]
Therefore, by Lemma 2 of \cite{mmpote}, we see that (\ref{l1}) holds.
\end{proof}

\begin{remark}
\label{re:concavo}Suppose that there exists $\alpha>1,$ such that the
isoperimetric estimator $I^{\alpha}$ is concave. Then, condition
(\ref{concon}) holds. In fact, since the function $I(s)/s^{1/\alpha}$ $\ $is
decreasing, it follows that%
\begin{align*}
\int_{t}^{1}\frac{I(s)}{s}\frac{ds}{s}  &  =\int_{t}^{1}\frac{I(s)}%
{s^{1/\alpha}}\frac{ds}{s^{2-1/\alpha}}\\
&  \leq\frac{I(t)}{t^{1/\alpha}}\int_{t}^{1}\frac{ds}{s^{2-1/\alpha}}\\
&  \leq\frac{\alpha}{\alpha+1}\frac{I(t)}{t}.
\end{align*}
\end{remark}

\begin{remark}
\label{nesta}We note for future use that if (\ref{concon}) holds then
Proposition \ref{l1} implies that for all r.i. spaces $X$ (cf. the discussion
in Section \ref{secc:ri} below) we have%
\[
\left\|  \left(  f_{\mu}^{\ast\ast}(t)-f_{\mu}^{\ast}(t)\right)  \frac
{I(t)}{t}\right\|  _{\bar{X}}\leq\left\|  \left|  \nabla f\right|  \right\|
_{X}.
\]
\end{remark}

\section{P\'{o}lya-Szeg\"{o}\label{secc::p-s}}

The theme of this section is that, under the presence of more symmetry, we can
chose a special rearrangement such that the general P\'{o}lya-Szeg\"{o}
inequality takes a more familiar form, to wit: ``there is a special
symmetrization that does not increase the norm of the gradient''. As an
application, in the next sections we shall show sharp Poincar\'{e}-Sobolev
inequalities for our model cases.

\subsection{Model Case 1: log concave measures\label{secc:logconcave}}

We consider product measures on $\mathbb{R}^{n}$ constructed using measures on
$\mathbb{R}$ defined by%
\[
\mu^{\Phi}=Z_{\Phi}^{-1}\exp\left(  -\Phi(\left|  x\right|  )\right)
dx=\varphi(x)dx,
\]
where $\Phi$ is convex, $\sqrt{\Phi}$ concave and where $Z_{\Phi}^{-1}$ is
chosen to ensure that $\mu^{\Phi}(\mathbb{R)=}1$. It is known that the
isoperimetric problem is solved by half-lines (cf. \cite{Bor} and \cite{Bob})
and the isoperimetric profile is given by
\[
I_{\mu^{\Phi}}(t)=\varphi\left(  H^{-1}(\min\{t,1-t\}\right)  =\varphi\left(
H^{-1}(t\right)  ),\text{ \ \ \ }t\in\lbrack0,1],
\]
where $H$ is the distribution function of $\mu^{\Phi}$, i.e. $H:\mathbb{R}%
\rightarrow(0,1)$ is the increasing function given by
\[
H(r)=\int_{-\infty}^{r}\varphi(x)dx.
\]

In what follows we will, furthermore, assume that $\Phi(0)=0,$ and that $\Phi$
is $\mathcal{C}^{2}$ on $[\Phi^{-1}(1),+\infty);$ then it is known (see
\cite{BCR}) that there exist constants $c_{1,},c_{2}$ such that, for all
$t\in\lbrack0,1]$,%
\begin{equation}
c_{1}L_{\Phi}(t)\leq I_{\mu^{\Phi}}(t)\leq c_{2}L_{\Phi}(t), \label{qq}%
\end{equation}
where%
\[
L_{\Phi}(t)=\min\{t,1-t\}\Phi^{\prime}\circ\Phi^{-1}\left(  \log\frac{1}%
{\min\{t,1-t\}}\right)  .
\]

We consider the product probability measures $\mu^{\Phi\otimes n}$ on
$\mathbb{R}^{n}.$ Their isoperimetric profiles $I_{\mu^{\Phi\otimes n}}$ are
dimension free (cf. \cite{BCR}): there exists a universal constant $c(\Phi)$
such that
\begin{equation}
I_{\mu^{\Phi}}(t)\geq\inf_{n\geq1}I_{\mu^{\Phi\otimes n}}(t)\geq c(\Phi
)I_{\mu^{\Phi}}(t). \label{tenso}%
\end{equation}
In what follows we shall write $\mu=\mu^{\Phi\otimes n}.$ For a measurable set
$\Omega\subset\mathbb{R}^{n},$ we let $\Omega^{\circ}$ be the half space
defined by
\[
\Omega^{\circ}=\{x=(x_{1},.....x_{n}):x_{1}<r\},\;\;r\in\mathbb{R},
\]
where $r\in\mathbb{R}$ is selected so that
\[
\mu(\Omega^{\circ})=\mu(\Omega),\text{ or more explicitly }r=H^{-1}(\mu
(\Omega)).
\]
It follows from (\ref{tenso}) that
\begin{align*}
\mu^{+}(\Omega)  &  \geq I_{\mu}(\mu(\Omega))\\
&  \geq c(\Phi)I_{\mu^{\Phi}}(\mu(\Omega^{\circ}))\\
&  =c(\Phi)\varphi\left(  H^{-1}(\mu(\Omega)\right)  )\\
&  =c(\Phi)\mu^{+}(\Omega^{\circ}).
\end{align*}

There is a natural rearrangement associated with the symmetrization operation
$\Omega\rightarrow\Omega^{\circ}.$ For $f:\mathbb{R}^{n}\rightarrow\mathbb{R}$
we let%
\[
f^{\circ}(x)=f_{\mu}^{\ast}(H(x_{1})).
\]

\begin{remark}
Note that, as in the Euclidean case, $f^{\circ}$ is equimeasurable with $f:$%
\begin{align*}
\mu_{f^{\circ}}(t)  &  =\mu\{x:f^{\circ}(x)>t\})=\mu\{x:f_{\mu}^{\ast}%
(H(x_{1}))>t\}\\
&  =\mu\{x:H(x_{1})\leq\mu_{f}(t)\}=\mu\{x:x_{1}\leq H^{-1}(\mu_{f}(t))\}\\
&  =\mu^{\Phi}\left\{  (-\infty,H^{-1}(\mu_{f}(t)))\right\} \\
&  =\mu_{f}(t).
\end{align*}
\end{remark}

We can now show the following generalization of the P\'{o}lya-Szeg\"{o} principle.

\begin{theorem}
\label{polya-szego} Consider the probability space $(\mathbb{R}^{n},\mu),$
with $\mu=\mu^{\Phi\otimes n}.$ The following P\'{o}lya-Szeg\"{o} inequality
holds: for all $f\in Lip(\mathbb{R}^{n}),$%
\begin{equation}
\int_{0}^{t}\left|  \nabla f^{\circ}\right|  _{\mu}^{\ast}(s)ds\leq\frac
{1}{c(\Phi)}\int_{0}^{t}\left|  \nabla f\right|  _{\mu}^{\ast}(s)ds.
\label{provadasLog}%
\end{equation}
In fact, (\ref{provadasLog}) is equivalent to all the inequalities listed in
Theorem \ref{teomain} above.
\end{theorem}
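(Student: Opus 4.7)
The plan is to exploit the fact that $f^{\circ}(x)=f_{\mu}^{\ast}(H(x_{1}))$ depends only on the first coordinate, compute $|\nabla f^{\circ}|$ and its $\mu$-decreasing rearrangement by hand, and then recognize (\ref{provadasLog}) as the P\'{o}lya--Szeg\"{o} inequality (\ref{provadas}) of \thmref{teomain} applied to the isoperimetric estimator $I(t)=c(\Phi)\,I_{\mu^{\Phi}}(t)$, whose admissibility on $(\mathbb{R}^{n},\mu)$ is furnished by (\ref{tenso}).

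First I would compute $|\nabla f^{\circ}|$ explicitly. Because $f_{\mu}^{\ast}$ is absolutely continuous (as shown in the proof of \thmref{teomain}) and $H'=\varphi>0$ is smooth, the chain rule yields, a.e.\ in $\mathbb{R}^{n}$,
\[
|\nabla f^{\circ}|(x)=-(f_{\mu}^{\ast})'(H(x_{1}))\,\varphi(x_{1}).
\]
Since this depends only on $x_{1}$, Fubini reduces its $\mu$-distribution to the $\mu^{\Phi}$-distribution on $\mathbb{R}$; the change of variable $s=H(x_{1})$ then pushes $\mu^{\Phi}$ forward to Lebesgue measure on $(0,1)$. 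The evenness of $\varphi$ (since $\Phi(|\cdot|)$ is even) gives $H(-r)=1-H(r)$, hence $H^{-1}(1-s)=-H^{-1}(s)$ and $\varphi(H^{-1}(s))=\varphi(H^{-1}(1-s))$; combined with $I_{\mu^{\Phi}}(t)=\varphi(H^{-1}(\min\{t,1-t\}))$, this promotes the obvious identity on $(0,1/2)$ to the global identity $\varphi(H^{-1}(s))=I_{\mu^{\Phi}}(s)$ for every $s\in(0,1)$. Consequently,
\[
(|\nabla f^{\circ}|)_{\mu}^{\ast}(t)=\bigl((-f_{\mu}^{\ast})'(\cdot)\,I_{\mu^{\Phi}}(\cdot)\bigr)^{\ast}(t),
\]
where the rearrangement on the right-hand side is taken with respect to Lebesgue measure on $(0,1)$.

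By (\ref{tenso}), $I(t):=c(\Phi)\,I_{\mu^{\Phi}}(t)$ is an isoperimetric estimator on $(\mathbb{R}^{n},\mu)$, so the P\'{o}lya--Szeg\"{o} assertion (\ref{provadas}) of \thmref{teomain} applied to this $I$ gives
\[
c(\Phi)\int_{0}^{t}\bigl((-f_{\mu}^{\ast})'(\cdot)\,I_{\mu^{\Phi}}(\cdot)\bigr)^{\ast}(s)\,ds\leq\int_{0}^{t}|\nabla f|_{\mu}^{\ast}(s)\,ds,
\]
and combining with the identification above yields (\ref{provadasLog}). Moreover, the same identification shows that (\ref{provadasLog}) coincides \emph{literally} with (\ref{provadas}) for the estimator $c(\Phi)\,I_{\mu^{\Phi}}$, so the equivalence with all remaining statements of \thmref{teomain} is inherited at once. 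The only delicate point I anticipate is the rearrangement computation: establishing the identity $\varphi(H^{-1}(s))=I_{\mu^{\Phi}}(s)$ globally on $(0,1)$ rather than only on $(0,1/2)$, which requires the symmetry of $\varphi$ to reconcile the $\min\{t,1-t\}$ appearing in the definition of $I_{\mu^{\Phi}}$. Once this is in place, the statement reduces to a direct invocation of \thmref{teomain}.
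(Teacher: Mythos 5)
Your argument is correct and reaches the same key identity as the paper, namely that $\left|\nabla f^{\circ}\right|$ on $(\mathbb{R}^{n},\mu)$ is equimeasurable with $(-f_{\mu}^{\ast})'(\cdot)\,I_{\mu^{\Phi}}(\cdot)$ on $((0,1),m)$, after which (\ref{provadas}) with the estimator $c(\Phi)I_{\mu^{\Phi}}$ (admissible by (\ref{tenso})) finishes the job. The only real difference is how equimeasurability is established: you invoke it directly from the fact that $H$ is a measure-preserving bijection $(\mathbb{R},\mu^{\Phi})\to((0,1),m)$ together with Fubini, whereas the paper establishes the same fact by testing against an arbitrary Young's function $N$ and appealing to the characterization of equimeasurability in terms of equality of $\int N(\cdot)$ for all convex $N$ (the two cited exercises in Bennett--Sharpley). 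Your route is a bit more economical and makes the push-forward structure transparent; the paper's route avoids discussing $H$ as a measure-preserving map explicitly and parallels the proof template it reuses later for the sphere and for model manifolds. The ``delicate point'' you flag about reconciling $\min\{t,1-t\}$ is already built into the paper's stated formula $I_{\mu^{\Phi}}(t)=\varphi(H^{-1}(\min\{t,1-t\}))=\varphi(H^{-1}(t))$, and your explanation via the evenness of $\varphi$ is exactly the right justification. In short: same proof, slightly different bookkeeping for the equimeasurability step.
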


\begin{proof}
Let $N$ be an arbitrary Young's function. Let $s=H(x_{1})$. Then,
\begin{align*}
\int_{0}^{1}N\left(  (-f_{\mu}^{\ast})^{\prime}(s)I_{\mu^{\Phi}}(s)\right)
ds  &  =\int_{\mathbb{R}}N(\left(  -f_{\mu}^{\ast}\right)  ^{\prime}%
(H(x_{1}))I_{\mu^{\Phi}}(H(x_{1}))\left|  H^{\prime}(x_{1})\right|  dx_{1}\\
&  =\int_{\mathbb{R}^{n}}N(\left(  -f_{\mu}^{\ast}\right)  ^{\prime}%
(H(x_{1}))I_{\mu^{\Phi}}(H(x_{1}))d\mu\\
&  =\int_{\mathbb{R}^{n}}N(\left|  \nabla f^{\circ}(x)\right|  )d\mu,
\end{align*}
where in the last step we have used the fact that
\[
\left|  \nabla f^{\circ}(x)\right|  =(f_{\mu}^{\ast})^{\prime}(H(x_{1}%
))H^{\prime}(x_{1})=(-f_{\mu}^{\ast})^{\prime}(H(x_{1}))I_{\mu^{\Phi}}%
(H(x_{1})).
\]
Since $N$ is increasing, then by \cite[exercise 3 pag. 88]{BS}, we have
\[
\int_{\mathbb{R}^{n}}N(\left|  \nabla f^{\circ}(x)\right|  )d\mu=\int_{0}%
^{1}N\left(  \left|  \nabla f^{\circ}\right|  _{\mu}^{\ast}(s)\right)  ds.
\]
Thus,
\[
\int_{0}^{1}N\left(  (-f_{\mu}^{\ast})^{\prime}(s)I_{\mu^{\Phi}}(s)\right)
ds=\int_{0}^{1}N\left(  \left|  \nabla f^{\circ}\right|  _{\mu}^{\ast
}(s)\right)  ds.
\]
Therefore, by \cite[exercise 5 pag. 88]{BS}, we have
\begin{equation}
\int_{0}^{t}((-f_{\mu}^{\ast})^{\prime}(\cdot)I_{\mu^{\Phi}}(\cdot))^{\ast
}(s)ds=\int_{0}^{t}\left|  \nabla f^{\circ}\right|  _{\mu}^{\ast}ds.
\label{psps}%
\end{equation}
Combining (\ref{psps}) with (\ref{tenso}) and (\ref{provadas}) we find
\begin{align*}
\int_{0}^{t}\left|  \nabla f^{\circ}\right|  _{\mu}^{\ast}ds  &  =\int_{0}%
^{t}((-f_{\mu}^{\ast})^{\prime}(\cdot)I_{\mu^{\Phi}}(\cdot))^{\ast}(s)ds\\
&  \leq\frac{1}{c(\Phi)}\int_{0}^{t}((-f_{\mu}^{\ast})^{\prime}(\cdot)I_{\mu
}(\cdot))^{\ast}(s)\\
&  \leq\frac{1}{c(\Phi)}\int_{0}^{t}\left|  \nabla f\right|  _{\mu}^{\ast
}(s)ds,
\end{align*}
as we wished to show.
\end{proof}

\begin{remark}
If $\mu^{\Phi}$ is the Gaussian measure, then $c(\Phi)=1$, and we recover the
classical Gaussian P\'{o}lya-Szeg\"{o} principle (see \cite{erh}).
\end{remark}

\subsection{Model Case 2: the $n-$sphere\label{secc:sphere}}

Let $\mathbb{S}^{n}$ be the unit sphere in $\mathbb{R}^{n+1},n\geq2.$ Let
$\omega_{n}=2\pi^{\frac{n+1}{2}}/\Gamma(\frac{n+1}{2})$ be the $n-$dimensional
Hausdorff measure of $\mathbb{S}^{n}.$ On $\mathbb{S}^{n}$ we consider the
geodesic distance $d$ and the uniform probability measure $\sigma_{n}$. For
$\theta\in\lbrack-\pi/2,\pi/2]$, let%

\[
\varphi_{n}(\theta)=\frac{\omega_{n-1}}{\omega_{n}}\cos^{n-1}\theta
\text{\ \ \ \ and \ \ \ \ \ }\Phi_{n}(\theta)=\int_{-\pi/2}^{\theta}%
\varphi_{n}(s)ds.
\]
The spherical cap
\[
C_{\theta}=\left\{  (\theta_{1},.....,\theta_{n})\in\mathbb{S}^{n}:\theta
_{1}<\theta\right\}
\]
has $\sigma_{n}-$measure $\Phi_{n}(\theta)$ and boundary measure $\varphi
_{n}(\theta)$. Thus, by the L\'{e}vy-Schmidt result, the isoperimetric
function of the sphere $I_{\mathbb{S}^{n}}$ coincides with $I_{n}=\varphi
_{n}\circ\Phi_{n}^{-1}$ $\ $(see \cite{bart})$.$ This function is continuous
on $[0,1]$ and symmetric with respect to $1/2,$ and $I_{n}(0)=I_{n}(1)=0.$
Moreover, $\left(  I_{n}\right)  ^{\frac{n}{n-1}}$ is concave.

Given a measurable set $\Omega\subset\mathbb{S}^{n},$ we let $\Omega^{\circ}$
be the spherical cap defined by
\[
\Omega^{\circ}=\{(\theta_{1},.....,\theta_{n})\in\mathbb{S}^{n}:\theta
_{1}<\theta\},
\]
where $\theta\in\lbrack-\pi/2,\pi/2]$ is selected so that
\[
\Phi_{n}(\theta)=\sigma_{n}(\Omega).
\]
In other words, $\theta$ is defined by
\[
\theta=\Phi^{-1}(\sigma_{n}(\Omega)).
\]
Since spherical caps are the subsets of $\mathbb{S}^{n}$ which yield the
equality in the isoperimetric inequality, we get
\[
\sigma_{n}^{+}(\Omega)\geq I_{n}(\sigma_{n}(\Omega))=\sigma_{n}^{+}%
(\Omega^{\circ}).
\]
Let $f:\mathbb{S}^{n}\rightarrow\mathbb{R}$, associated with the operation
$\Omega\rightarrow\Omega^{\circ}$ we define the rearrangement $f^{\circ}$ by
\[
f^{\circ}(\theta_{1},.....,\theta_{n})=f_{\sigma_{n}}^{\ast}(\Phi_{n}%
(\theta_{1})).
\]

\begin{theorem}
\label{polya-szego-esfe} Consider the space $(\mathbb{S}^{n},d,\sigma_{n}).$
The following P\'{o}lya-Szeg\"{o} inequality holds: for all f $\in
Lip(\mathbb{S}^{n}),$%
\begin{equation}
\int_{0}^{t}\left|  \nabla f^{\circ}\right|  _{\sigma_{n}}^{\ast}(s)ds\leq
\int_{0}^{t}\left|  \nabla f\right|  _{\sigma_{n}}^{\ast}(s)ds.
\label{provadasSn}%
\end{equation}
Moreover, (\ref{provadasSn}) is equivalent to any of the inequalities stated
in Theorem \ref{teomain} above.
\end{theorem}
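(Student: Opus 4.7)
My plan is to mirror the proof of Theorem~\ref{polya-szego} almost verbatim, with the sphere's isoperimetric profile $I_n = \varphi_n \circ \Phi_n^{-1}$ playing the role of $I_{\mu^\Phi}$, and with the simplification that no constant $c(\Phi)$ enters, since by the L\'evy--Schmidt theorem $I_{(\mathbb{S}^n,d,\sigma_n)}$ equals $I_n$ exactly rather than only up to a constant factor. The equivalence assertion at the end is then immediate from Theorem~\ref{teomain}, because inequality (\ref{provadasSn}) for $f^{\circ}$ will coincide with the P\'olya--Szeg\"o statement (\ref{provadas}) of Theorem~\ref{teomain} instantiated on the sphere.

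The key computation to carry out first is the pointwise identity
\[
|\nabla f^{\circ}(\theta_1,\ldots,\theta_n)| = (-f_{\sigma_n}^{\ast})'(\Phi_n(\theta_1))\,I_n(\Phi_n(\theta_1)).
\]
Since $f^{\circ}$ depends only on the first spherical coordinate $\theta_1$, and the sphere's Riemannian metric makes $\partial/\partial\theta_1$ a unit vector field on the loci where $\theta_1$ parametrizes geodesics, the chain rule gives $|\nabla f^{\circ}| = |(f_{\sigma_n}^{\ast})'(\Phi_n(\theta_1))\,\Phi_n'(\theta_1)|$; and $\Phi_n'(\theta_1) = \varphi_n(\theta_1) = I_n(\Phi_n(\theta_1))$ by definition of $I_n$.

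Next, for any Young's function $N$, I would perform the change of variable $s = \Phi_n(\theta_1)$ (noting $ds = \varphi_n(\theta_1)\,d\theta_1$) and integrate over the remaining coordinates (contributing the factor that makes $\sigma_n$ a probability measure) to obtain
\[
\int_0^1 N\!\left((-f_{\sigma_n}^{\ast})'(s)\,I_n(s)\right)ds = \int_{\mathbb{S}^n} N(|\nabla f^{\circ}(x)|)\,d\sigma_n = \int_0^1 N\!\left(|\nabla f^{\circ}|_{\sigma_n}^{\ast}(s)\right)ds,
\]
where the last equality is \cite[Exercise~3, p.~88]{BS}. Since this holds for every Young's function $N$, \cite[Exercise~5, p.~88]{BS} yields the Hardy--Littlewood type identity
\[
\int_0^t \bigl((-f_{\sigma_n}^{\ast})'(\cdot)\,I_n(\cdot)\bigr)^{\ast}(s)\,ds = \int_0^t |\nabla f^{\circ}|_{\sigma_n}^{\ast}(s)\,ds, \quad t\in(0,1).
\]
Combining this identity with the P\'olya--Szeg\"o inequality (\ref{provadas}) of Theorem~\ref{teomain}, which applies on $(\mathbb{S}^n,d,\sigma_n)$ with isoperimetric estimator $I_n$, yields (\ref{provadasSn}) at once.

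The main obstacle, and really the only nontrivial point, is the gradient identity for $f^{\circ}$: one must verify the computation intrinsically on the sphere rather than importing it from the Euclidean setting of the previous theorem. After that, the remaining manipulations are purely measure-theoretic and follow the log-concave template line by line. For the equivalence part, once (\ref{provadasSn}) is established, it trivially implies (\ref{provadas}) in Theorem~\ref{teomain} (via the identity just displayed), and (\ref{provadas}) in turn is equivalent to all the other statements in Theorem~\ref{teomain}; conversely, (\ref{provadas}) together with the identity implies (\ref{provadasSn}).
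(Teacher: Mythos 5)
Your proposal is correct and follows essentially the same line as the paper's proof: establish the pointwise gradient identity $|\nabla f^\circ| = (-f_{\sigma_n}^{\ast})'(\Phi_n(\theta_1))\,I_n(\Phi_n(\theta_1))$, use the change of variable $s=\Phi_n(\theta_1)$ together with the Young's-function criterion from \cite[Exercises 3 and 5, p.~88]{BS} to obtain the Hardy--Littlewood identity $\int_0^t((-f_{\sigma_n}^{\ast})'(\cdot)I_n(\cdot))^{\ast}\,ds = \int_0^t |\nabla f^\circ|_{\sigma_n}^{\ast}\,ds$, and then invoke (\ref{provadas}) from Theorem~\ref{teomain}. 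The observation that the constant $c(\Phi)$ disappears because $I_n = I_{\mathbb{S}^n}$ exactly, and the way you deduce the equivalence statement from this identity, both match the paper's intent.
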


\begin{proof}
The proof is almost identical to the proof of Theorem \ref{polya-szego}. Using
spherical coordinates we have
\[
\omega_{n}=\int_{(-\pi/2,\pi/2)^{n-1}\times(-\pi,\pi)}\prod_{i=1}^{n-1}%
\cos^{n-i}\theta_{i}d\theta_{1}\cdots d\theta_{n}=\int_{\mathbb{S}^{n}}%
s_{n}(\theta^{\otimes n})d\theta^{\otimes n}.
\]
Therefore,
\[
d\sigma_{n}=\frac{1}{\omega_{n}}s_{n}(\theta^{\otimes n})d\theta^{\otimes n}.
\]
Let $N$ be a Young's function, and let $s=\Phi_{n}(\theta_{1})$. For
notational convenience we let $I=\int_{0}^{1}N\left(  (-f_{\sigma_{n}}^{\ast
})^{\prime}(s)I_{n}(s)\right)  ds.$ Then,%
\begin{align*}
I &  =\int_{-\pi/2}^{\pi/2}N(\left(  -f_{\sigma_{n}}^{\ast}\right)  ^{\prime
}\Phi_{n}(\theta_{1}))I_{n}(\Phi_{n}(\theta_{1}))\left|  \Phi_{n}^{\prime
}(\theta_{1})\right|  d\theta_{1}\\
&  =\int_{-\pi/2}^{\pi/2}N(\left(  -f_{\sigma_{n}}^{\ast}\right)  ^{\prime
}\Phi_{n}(\theta_{1}))I_{n}(\Phi_{n}(\theta_{1}))\frac{\omega_{n-1}}%
{\omega_{n}}\cos^{n-1}\theta_{1}d\theta_{1}\\
&  =\int_{\mathbb{S}^{n-1}}s_{n-1}(\theta^{\otimes(n-1)})d\theta
^{\otimes(n-1)}\int_{-\pi/2}^{\pi/2}N(\left|  \nabla f^{\circ}(\theta
_{1},.....,\theta_{n})\right|  )\frac{1}{\omega_{n}}\cos^{n-1}\theta
_{1}d\theta_{1}\\
&  =\int_{\mathbb{S}^{n}}N(\left|  \nabla f^{\circ}(\theta_{1},.....,\theta
_{n})\right|  )\frac{1}{\omega_{n}}s_{n}(\theta^{\otimes n})d\theta^{\otimes
n}\\
&  =\int_{\mathbb{S}^{n}}N(\left|  \nabla f^{\circ}(\theta_{1},.....,\theta
_{n})\right|  )d\sigma_{n},
\end{align*}
where we have used the fact that
\begin{align*}
(-f_{\sigma_{n}}^{\ast})^{\prime}(\Phi_{n}(\theta_{1}))I_{n}(\Phi_{n}%
(\theta_{1})) &  =(f_{\sigma_{n}}^{\ast})^{\prime}(\Phi_{n}(\theta_{1}%
))\Phi_{n}^{\prime}(\theta_{1})\\
&  =\left|  \nabla f^{\circ}(\theta_{1},.....,\theta_{n})\right|  .
\end{align*}
At this point we proceed in the same way as in the proof of Theorem
\ref{polya-szego}.
\end{proof}

\begin{remark}
\label{preferida}Since $\left(  I_{n}\right)  ^{\frac{n}{n-1}}$ is concave,
then by Remark \ref{re:concavo} we have that for all $f\in Lip(\mathbb{S}%
^{n})$
\begin{equation}
\int_{0}^{t}\left(  \frac{I(\cdot)}{\left(  \cdot\right)  }[f_{\sigma_{n}%
}^{\ast\ast}(\cdot)-f_{\sigma_{n}}^{\ast}(\cdot)]\right)  ^{\ast}ds\leq
4c\int_{0}^{t}\left|  \nabla f\right|  _{\sigma_{n}}^{\ast}(s)ds.
\label{destinada}%
\end{equation}
Therefore, (\ref{destinada}) is equivalent to any of the inequalities stated
in Theorem \ref{teomain} above.$\ $We also have (cf. Remark \ref{nesta} above)%
\[
\left\|  \frac{I(t)}{t}[f_{\sigma_{n}}^{\ast\ast}(t)-f_{\sigma_{n}}^{\ast
}(t)]\right\|  _{\bar{X}}\leq\left\|  \left|  \nabla f\right|  \right\|
_{X},
\]
without any restrictions on the indices of $X.$
\end{remark}

\subsection{Model Case 3: Model Riemannian manifolds\label{secc:ros}}

The analysis in the previous sections can be extended to a general class of
model spaces described for example in Ros \cite{Ros}, and the references
therein. In this section we complete the analysis of model spaces by showing
that the P\'{o}lya-Szeg\"{o} inequality holds for Ros's spaces.

We recall briefly the construction and refer to \cite{Ros} and \cite{mmmazya}
for more details. Let $M_{0}$ be an $n_{0}$-dimensional Riemannian manifold
with geodesic distance $d$. A probability measure $\mu^{0}$ on $M$ that is
absolutely continuous with respect to the volume $dVol_{M}$ will be called a
\textbf{model measure}, if there exists a continuous family (in the sense of
the Hausdorff distance on compact subsets) $\mathcal{D}=\{D^{t}:0\leq
t\leq1\}$ of closed subsets of $M_{0}$ satisfying the following conditions:

\begin{enumerate}
\item $D^{s}\subset D^{t},$ for $0\leq s<t\leq$ and $\mu^{0}(D^{t})=t,$

\item $D^{t}$ is a smooth isoperimetric domain of $\mu^{0}$ and $I_{\mu^{0}%
}(t)=\mu^{0}(D^{t})$ is positive and smooth for $0<t<1$, where $I_{\mu^{0}}$
denotes the isoperimetric profile of $M_{0},$

\item  The $r$-enlargement of $D^{t}$, defined by $(D^{t})_{r}=\{x\in
M_{0}:d(x,D^{t})\leq r\}$ verifies $(D^{t})_{r}=D^{s}$ for some $s=s(t,r)$,
$0\leq t\leq1,$

\item $D^{1}=M_{0}$ and $D^{0}$ is either a point or the empty set.

\item  We shall also assume that the corresponding isoperimetric profile
$I_{\mu}$ satisfies our usual assumptions (cf. Condition \ref{pedida} above).
\end{enumerate}

Let $f:M_{0}\rightarrow\mathbb{R}$. The rearrangement $f^{\circ}%
:M_{0}\rightarrow\mathbb{R},$ is defined by%
\[
f^{\circ}(x)=f_{\mu^{0}}^{\ast}(p(x)),
\]
where
\[%
\begin{array}
[c]{c}%
p:M_{0}\rightarrow\lbrack0,1]\\
x\in\partial D^{t}\rightarrow t,
\end{array}
\]
($\partial D^{t}$ denotes the boundary of $D^{t}).$ Since $p$ is measure
preserving (cf. \cite{mmmazya}) it is easy to verify that $f^{\circ}$ is
equimeasurable with $f:$%
\begin{align*}
\mu_{f^{\circ}}^{0}(t)  &  =\mu^{0}\{x:f^{\circ}(x)>t\}\\
&  =\mu^{0}\{x:f_{\mu^{0}}^{\ast}(p(x))>t\}\\
&  =\mu^{0}\{x:p(x)\leq\mu_{f}^{0}(t)\}\\
&  =\mu^{0}\{x:p^{-1}(0,\mu_{f}^{0}(t))\}\\
&  =\mu_{f}^{0}(t).
\end{align*}
Moreover, from (cf. \cite{mmmazya})
\[
\left|  \nabla p(x)\right|  =\left|  I_{\mu^{0}}(p(x))\right|
\]
we see that
\begin{align*}
\left|  \nabla f^{\circ}(x)\right|   &  =(-f_{\mu^{0}}^{\ast})^{\prime
}(p(x))\left|  \nabla p(x)\right| \\
&  =\left|  (-f_{\mu^{0}}^{\ast})^{\prime}(p(x))I_{\mu^{0}}(p(x))\right|  .
\end{align*}
Therefore the analysis of Theorem \ref{polya-szego} can be repeated verbatim
and yields

\begin{theorem}
Let $\left(  M_{0},d\right)  $ be an $n_{0}$-dimensional Riemannian manifold
endowed with a model measure $\mu_{0}.$ Then, the following P\'{o}%
lya-Szeg\"{o} inequality holds: for all $f\in Lip(M_{0})$%
\[
\int_{0}^{t}\left|  \nabla f^{\circ}\right|  _{\mu^{0}}^{\ast}(s)ds\leq
\int_{0}^{t}\left|  \nabla f\right|  _{\mu^{0}}^{\ast}(s)ds.
\]
\end{theorem}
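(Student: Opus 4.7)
The plan is to mimic the proof of Theorem \ref{polya-szego} verbatim, using the two key facts already recorded just before the statement: the map $p:M_{0}\to[0,1]$, $x\in\partial D^{t}\mapsto t$ is measure preserving (sending $\mu^{0}$ to Lebesgue measure on $(0,1)$), and
\[
|\nabla f^{\circ}(x)| = (-f_{\mu^{0}}^{\ast})'(p(x))\,I_{\mu^{0}}(p(x)).
\]
With these in hand, the structure of the argument is identical to the Gaussian/log-concave and spherical cases; no new geometric input is needed.

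First, I would fix an arbitrary Young's function $N$ and use the measure-preservation of $p$ to change variables:
\[
\int_{0}^{1} N\bigl((-f_{\mu^{0}}^{\ast})'(s)\,I_{\mu^{0}}(s)\bigr)\,ds
= \int_{M_{0}} N\bigl((-f_{\mu^{0}}^{\ast})'(p(x))\,I_{\mu^{0}}(p(x))\bigr)\,d\mu^{0}
= \int_{M_{0}} N(|\nabla f^{\circ}(x)|)\,d\mu^{0},
\]
where the last equality is the identity for $|\nabla f^{\circ}|$ quoted above. Since the right-hand side equals $\int_{0}^{1} N(|\nabla f^{\circ}|_{\mu^{0}}^{\ast}(s))\,ds$ by \cite[exercise 3, p.~88]{BS}, we obtain
\[
\int_{0}^{1} N\bigl((-f_{\mu^{0}}^{\ast})'(s)\,I_{\mu^{0}}(s)\bigr)\,ds
= \int_{0}^{1} N\bigl(|\nabla f^{\circ}|_{\mu^{0}}^{\ast}(s)\bigr)\,ds
\]
for every Young's function $N$. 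By \cite[exercise 5, p.~88]{BS}, this forces equality of the Hardy-Littlewood maximal integrals:
\[
\int_{0}^{t} \bigl((-f_{\mu^{0}}^{\ast})'(\cdot)\,I_{\mu^{0}}(\cdot)\bigr)^{\ast}(s)\,ds
= \int_{0}^{t} |\nabla f^{\circ}|_{\mu^{0}}^{\ast}(s)\,ds,\qquad t\in(0,1).
\]

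Second, since the isoperimetric profile $I_{\mu^{0}}$ is an isoperimetric estimator for $(M_{0},d,\mu^{0})$ (assumption (5) in the model-measure definition), the P\'olya-Szeg\"o inequality (\ref{provadas}) of Theorem \ref{teomain} applies:
\[
\int_{0}^{t} \bigl((-f_{\mu^{0}}^{\ast})'(\cdot)\,I_{\mu^{0}}(\cdot)\bigr)^{\ast}(s)\,ds
\leq \int_{0}^{t} |\nabla f|_{\mu^{0}}^{\ast}(s)\,ds.
\]
Chaining the two displays yields the desired inequality.

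The only delicate point I foresee is ensuring the change-of-variables step is rigorously justified when $f_{\mu^{0}}^{\ast}$ has flat pieces or is only locally absolutely continuous; but this is exactly the setting handled in Theorem \ref{teomain}'s proof (see the argument around (\ref{cwi}) and (\ref{salvado})), and the measure-preservation of $p$ plus the identity for $|\nabla f^{\circ}|$ from \cite{mmmazya} take care of it. Everything else is a transcription of the Gaussian/spherical arguments.
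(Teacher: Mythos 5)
Your proposal is correct and follows the paper's intended argument exactly: the paper's own proof for this case is a single-line pointer ("the analysis of Theorem \ref{polya-szego} can be repeated verbatim"), and you have simply written that transcription out in full — change variables via the measure-preserving map $p$, invoke the identity $|\nabla f^{\circ}(x)|=(-f_{\mu^{0}}^{\ast})'(p(x))\,I_{\mu^{0}}(p(x))$, apply \cite[exercises 3 and 5, p.~88]{BS} to pass from equality of $N$-integrals to equality of level-function integrals, and finish with (\ref{provadas}). You also correctly note that, unlike the log-concave case, no dimension-free constant $c(\Phi)$ appears here since $I_{\mu^{0}}$ is the genuine isoperimetric profile of $(M_0,d,\mu^0)$.
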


\section{Poincar\'{e} Inequalities\label{secc::po}}

Let $\left(  \Omega,d,\mu\right)  $ be a metric probability space, and let $I$
be an isoperimetric estimator for $(\Omega,d,\mu).$

In this section we study Poincar\'{e} type inequalities of the form%
\begin{equation}
\left\|  g-\int_{\Omega}gd\mu\right\|  _{Y}\preceq\left\|  \left|  \nabla
g\right|  \right\|  _{X},\text{ \ \ }g\in Lip(\Omega), \label{poin00}%
\end{equation}
where $X,Y$ are rearrangement-invariant spaces on $\Omega.$

It is easy to see that, when $X=Y=L^{1}(\Omega),$ the inequality
(\ref{poin00}) follows readily from Ledoux's inequality (\ref{ledo}). Indeed,
using (\ref{ledo}) we can readily see that for all $f\in Lip(\Omega),$%
\begin{equation}
\int_{{\Omega}}\left|  f(x)-m_{e}\right|  d\mu\leq\frac{1}{2I(1/2)}%
\int_{{\Omega}}\left|  \nabla f(x)\right|  d\mu, \label{pr1}%
\end{equation}
where $m_{e}$ is a median of $f$. Indeed, set $f^{+}=\max(f-m_{e},0)$ and
$f^{-}=-\min(f-m_{e},0)$ so that $f-m_{e}=f^{+}-f^{-}.$ Then,%
\begin{align*}
\int_{{\Omega}}\left|  f-m_{e}\right|  d\mu &  =\int_{{\Omega}}f^{+}d\mu
+\int_{{\Omega}}f^{-}d\mu\\
&  =\int_{0}^{\infty}\mu_{f^{+}}(s)ds+\int_{0}^{\infty}\mu_{f^{-}}(s)ds\\
&  =(A),\text{ say.}%
\end{align*}
Each of these integrals can be estimated using the properties of the
isoperimetric estimator and Ledoux's inequality (\ref{ledo}). First we use the
fact that $\frac{I(s)}{s}$ is decreasing combined with the definition of
median, to find that%
\[
2\mu_{g}(s)I\left(  \frac{1}{2}\right)  \leq I(\mu_{g}(s)),\text{ where
}g=f^{+}\text{ or }g=f^{-}.
\]
Consequently,%
\begin{align*}
(A)  &  \leq\frac{1}{2I(\frac{1}{2})}\left(  \int_{0}^{\infty}I(\mu_{f^{+}%
}(s))ds+\int_{0}^{\infty}I(\mu_{f^{-}}(s))ds\right) \\
&  \leq\frac{1}{2I(\frac{1}{2})}\left(  \int_{{\Omega}}\left|  \nabla
f^{+}(x)\right|  d\mu+\int_{{\Omega}}\left|  \nabla f^{-}(x)\right|
d\mu\right)  \text{ (by (\ref{ledo}))}\\
&  =\frac{1}{2I(1/2)}\int_{{\Omega}}\left|  \nabla f(x)\right|  d\mu.
\end{align*}
Thus,%
\[
\int_{{\Omega}}\left|  f(x)-m_{e}\right|  d\mu\leq\frac{1}{2I(1/2)}%
\int_{{\Omega}}\left|  \nabla f(x)\right|  d\mu.
\]

The isoperimetric Hardy operator $Q_{I}$ is the operator defined on measurable
functions on $(0,1)$ by
\[
Q_{I}f(t)=\int_{t}^{1}f(s)\frac{ds}{I(s)},
\]
where $I$ is an isoperimetric estimator. We consider the possibility of
characterizing Poincar\'{e} inequalities of the form (\ref{poin00}) in terms
of the boundedness of $Q_{I}$ as an operator from $\bar{X}$ to $\bar{Y}.$

\begin{theorem}
\label{opti00}Let $X,Y$ be two r.i. spaces on $\Omega$. Suppose that there
exists an absolute constant $C$, such for every positive function $f\in\bar
{X},$ with supp$f\subset(0,1/2),$ we have
\begin{equation}
\left\|  Q_{I}f\right\|  _{\bar{Y}}\leq C\left\|  f\right\|  _{\bar{X}}.
\label{har}%
\end{equation}
Then, for all $g\in Lip(\Omega),$%
\begin{equation}
\left\|  g-\int_{\Omega}gd\mu\right\|  _{Y}\preceq\left\|  \left|  \nabla
g\right|  \right\|  _{X}. \label{revhad}%
\end{equation}
Moreover:

\begin{enumerate}
\item [(a)]Suppose that the operator $\tilde{Q}_{I}f(t)=\frac{I(t)}{t}\int
_{t}^{1/2}f(s)\frac{ds}{I(s)}$ is bounded on $\bar{X}.$ Then, for all $g\in
Lip(\Omega),$ we have
\[
\left\|  g-\int_{\Omega}gd\mu\right\|  _{Y}\preceq\left\|  \left(
g-\int_{\Omega}gd\mu\right)  _{\mu}^{\ast}(t)\frac{I(t)}{t}\right\|  _{\bar
{X}}\preceq\left\|  \left|  \nabla g\right|  \right\|  _{X}.
\]

\item[(b)] If $\overline{\alpha}_{X}<1$, or if the isoperimetric estimator $I$
satisfies (\ref{concon}), then, for all $g\in Lip(\Omega)$ we have,%
\begin{equation}
\left\|  g-\int_{\Omega}gd\mu\right\|  _{Y}\preceq\left\|  g-\int_{\Omega
}gd\mu\right\|  _{LS(X)}\preceq\left\|  \left|  \nabla g\right|  \right\|
_{X}. \label{perdida02}%
\end{equation}
\end{enumerate}
\end{theorem}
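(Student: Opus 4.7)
By Lemma~\ref{media} it suffices, up to universal constants, to estimate $\|g - m_e\|_Y$ for a median $m_e$ of $g$. I would then split $g - m_e = f_+ - f_-$, where $f_\pm = (g - m_e)^{\pm}$: the defining property of the median forces $\mu\{f_\pm > 0\} \leq 1/2$, so the rearrangements $f_\pm^{\ast}$ are supported in $(0, 1/2)$ and vanish at $t = 1/2$. Condition 2 also gives $|\nabla f_\pm| \leq |\nabla g|$ a.e. This splitting is what allows the isoperimetric Hardy operator $Q_I$ to enter.

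The key representation comes from the absolute continuity of $f_\pm^{\ast}$ established inside the proof of Theorem~\ref{teomain}: for $t \in (0, 1/2)$,
\[
f_\pm^{\ast}(t) \;=\; \int_t^{1/2} (-f_\pm^{\ast})'(s)\, ds \;=\; \int_t^{1/2} \frac{h_\pm(s)}{I(s)}\, ds \;=\; Q_I h_\pm(t),
\]
where $h_\pm(s) := I(s)(-f_\pm^{\ast})'(s) \geq 0$ is supported in $(0, 1/2)$. The P\'olya--Szeg\"o inequality of Theorem~\ref{teomain}(4), combined with the Hardy--Littlewood--P\'olya majorization principle on r.i.\ spaces, yields
\[
\|h_\pm\|_{\bar X} \;=\; \|h_\pm^{\ast}\|_{\bar X} \;\leq\; \||\nabla f_\pm|\|_X \;\leq\; \||\nabla g|\|_X.
\]
Applying the hypothesis (\ref{har}) to the non-negative function $h_\pm$ then gives $\|f_\pm\|_Y = \|Q_I h_\pm\|_{\bar Y} \leq C \|h_\pm\|_{\bar X} \leq C \||\nabla g|\|_X$. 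Adding the $\pm$ contributions via the triangle inequality in $Y$ and then invoking Lemma~\ref{media} establishes (\ref{revhad}).

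For part (a), the identity $f_\pm^{\ast}(t) \, I(t)/t = \tilde Q_I h_\pm(t)$ combined with the boundedness of $\tilde Q_I$ on $\bar X$ yields
\[
\bigl\| f_\pm^{\ast}(t) \, I(t)/t \bigr\|_{\bar X} \;=\; \|\tilde Q_I h_\pm\|_{\bar X} \;\leq\; C\|h_\pm\|_{\bar X} \;\leq\; C\||\nabla g|\|_X;
\]
the rearrangement subadditivity $|g - m_e|^{\ast}(t) \leq f_+^{\ast}(t/2) + f_-^{\ast}(t/2)$ together with the monotonicity of $I(t)/t$ and the dilation invariance of $\bar X$ (cf.\ (\ref{ccdd})) reassembles the $\pm$ pieces to deliver the right-hand $\preceq$ of (a). For (b), the right-hand $\preceq$ follows directly from the oscillation inequality $(f^{\ast\ast}(t) - f^{\ast}(t))\, I(t)/t \leq |\nabla f|^{\ast\ast}(t)$ of Theorem~\ref{teomain}(5), combined either with the boundedness of the Hardy operator $P$ on $\bar X$ (guaranteed by $\overline{\alpha}_X < 1$ via (\ref{alcance})) or, under (\ref{concon}), directly with Remark~\ref{nesta}. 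In both (a) and (b) the left-hand $\preceq$ is extracted from the same representation $f_\pm^{\ast}(t) = Q_I h_\pm(t)$, feeding the middle quantity rather than $\|h_\pm\|_{\bar X}$ into (\ref{har}).

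\textbf{Main obstacle.} The hardest piece of bookkeeping is the reassembly of the $\pm$ pieces into an estimate for the full function $|g - m_e|$, as the decreasing rearrangement is only subadditive up to a dilation factor; this is resolved by the boundedness of the dilation operator on $\bar X$ from (\ref{ccdd}) and the fact that $I(t)/t$ is decreasing. All other ingredients are packaged applications of the machinery already assembled in Theorem~\ref{teomain} together with the new isoperimetric Hardy identity $f_\pm^{\ast} = Q_I h_\pm$.
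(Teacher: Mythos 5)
Your argument for the main inequality (\ref{revhad}) is correct but follows a genuinely different route from the paper. You decompose $g - m_e = f_+ - f_-$ so that each $f_\pm^{\ast}$ vanishes at $1/2$; the representation $f_\pm^{\ast}(t) = Q_I h_\pm(t)$ with $h_\pm = I\,(-f_\pm^{\ast})'$ then holds exactly on $(0,1/2)$ with no boundary term, and combined with the Calder\'on majorization principle applied to (\ref{provadas}), the subadditivity $(f_+ + f_-)^{\ast}(t)\leq f_+^{\ast}(t/2)+f_-^{\ast}(t/2)$, the dilation estimate (\ref{ccdd}), and Lemma \ref{media}, this closes. The paper instead applies the fundamental theorem of calculus directly to $g_\mu^{\ast}$, writing $g_\mu^{\ast}(t) = \int_t^{1/2}(-g_\mu^{\ast})'(s)\,ds + g_\mu^{\ast}(1/2)$, and absorbs the boundary constant $g_\mu^{\ast}(1/2)\leq 2\|g\|_{L^1}$ into the right side via (\ref{pr1}) and (\ref{nuevadeli}). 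Your decomposition trades the $L^1$ correction for the reassembly step; both are valid, and neither is obviously preferable.

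There is, however, a genuine gap in your plan for parts (a) and (b). You claim the left-hand $\preceq$'s are "extracted from the same representation $f_\pm^{\ast} = Q_I h_\pm$, feeding the middle quantity rather than $\|h_\pm\|_{\bar X}$ into (\ref{har})". But that representation gives $\|f_\pm\|_Y\preceq\|h_\pm\|_{\bar X}$; it does not relate $\|f_\pm\|_Y$ to the weighted norm $\|f_\pm^{\ast}(t)\,I(t)/t\|_{\bar X}$, and there is no general comparison between $\|h_\pm\|_{\bar X}$ and $\|f_\pm^{\ast}(t)\,I(t)/t\|_{\bar X}$ (or between $\|h_\pm\|_{\bar X}$ and $\|f_\pm\|_{LS(X)}$). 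The left-hand inequalities in (a) and (b) are generic embeddings of $\bar X_I$ (resp.\ of $LS(X)$ modulo an $L^1$ term) into $\bar Y$, valid for arbitrary functions and independent of your decomposition. The paper proves them — (\ref{perdida}) and (\ref{harhar}) respectively — by feeding a different function into $Q_I$ and invoking a dyadic averaging step, e.g.\ $f_\mu^{\ast}(t)\ln 2\leq\int_{t/2}^{t}f_\mu^{\ast}(s)\,\frac{ds}{s}\leq Q_I\!\left[f_\mu^{\ast}(\cdot)\,\tfrac{I(\cdot)}{\cdot}\,\chi_{(0,1/2)}\right]\!(t/2)$ for (a), and the analogous identity $f_\mu^{\ast\ast}(t)=f_\mu^{\ast\ast}(1/2)+\int_t^{1/2}\bigl(f_\mu^{\ast\ast}(s)-f_\mu^{\ast}(s)\bigr)\frac{ds}{s}$ for (b). Your outline omits this step, so as written it does not establish (a) or (b).
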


\begin{proof}
Let $g$ $\in Lip(\Omega)$. Write
\[
g_{\mu}^{\ast}(t)=\int_{t}^{1/2}\left(  -g_{\mu}^{\ast}\right)  ^{\prime
}(s)ds+g_{\mu}^{\ast}(1/2),\text{ }t\in(0,1/2].
\]
Thus,
\begin{align*}
\left\|  g\right\|  _{Y}  &  =\left\|  g_{\mu}^{\ast}\right\|  _{\bar{Y}%
}\preceq\left\|  g_{\mu}^{\ast}\chi_{\lbrack0,1/2]}\right\|  _{\bar{Y}}\\
&  \preceq\left\|  \int_{t}^{1/2}\left(  -g_{\mu}^{\ast}\right)  ^{\prime
}(s)ds\right\|  _{\bar{Y}}+g_{\mu}^{\ast}(1/2)\left\|  1\right\|  _{\bar{Y}}\\
&  \leq\left\|  \int_{t}^{1/2}\left(  -g_{\mu}^{\ast}\right)  ^{\prime
}(s)I(s)\frac{ds}{I(s)}\right\|  _{\bar{Y}}+2\left\|  1\right\|  _{\bar{Y}%
}\left\|  g\right\|  _{L^{1}}\\
&  \preceq\left\|  \left(  -g_{\mu}^{\ast}\right)  ^{\prime}(s)I(s)\right\|
_{\bar{X}}+\left\|  g\right\|  _{L_{1}}\text{ \ \ (by (\ref{har}))}\\
&  \preceq\left\|  \left|  \nabla g\right|  \right\|  _{X}\text{ }+\left\|
g\right\|  _{L^{1}}\text{(by (\ref{provadas})).}%
\end{align*}

Therefore,
\begin{align*}
\left\|  g-\int_{\Omega}gd\mu\right\|  _{Y}  &  \preceq\left\|  \left|  \nabla
g\right|  \right\|  _{X}\text{ }+\left\|  g-\int_{\Omega}gd\mu\right\|
_{L^{1}}\\
&  \preceq\left\|  \left|  \nabla g\right|  \right\|  _{X}+\left\|  \left|
\nabla g\right|  \right\|  _{L^{1}}\text{ (by (\ref{pr1}))}\\
&  \preceq\left\|  \left|  \nabla g\right|  \right\|  _{X}\text{ (by
(\ref{nuevadeli})).}%
\end{align*}

\textbf{Part (a)} It will be convenient to let $\bar{X}_{I}$ be the r.i. space
on $(0,1)$ defined by the condition
\[
\left\|  h\right\|  _{\bar{X}_{I}}=\left\|  h(t)\frac{I(t)}{t}\right\|
_{\bar{X}}<\infty.
\]
We start by proving that
\begin{equation}
\left\|  f\right\|  _{\bar{Y}}\preceq\left\|  f_{\mu}^{\ast}\right\|
_{\bar{X}_{I}}. \label{perdida}%
\end{equation}
Indeed, let $0<t<1/2$. From
\[
f_{\mu}^{\ast}(t)\ln2\leq\int_{t/2}^{t}f_{\mu}^{\ast}(s)\frac{ds}{s}\leq
\int_{t/2}^{1/2}f_{\mu}^{\ast}(s)\frac{I(s)}{s}\frac{ds}{I(s)},
\]
we see that for $t\in(0,1/2),$%
\[
f_{\mu}^{\ast}(t)\preceq\int_{t/2}^{1/2}f_{\mu}^{\ast}(s)\frac{I(s)}{s}%
\frac{ds}{I(s)}+f_{\mu}^{\ast}(1/2).
\]
Consequently,
\begin{align*}
\left\|  f_{\mu}^{\ast}(t)\chi_{(0,1/2)}(t)\right\|  _{\bar{Y}}  &
\preceq\left\|  \int_{t/2}^{1}\left(  f_{\mu}^{\ast}(s)\frac{I(s)}{s}\right)
\chi_{(0,1/2)}(s)\frac{ds}{I(s)}\right\|  _{\bar{Y}}+\left\|  f\right\|
_{L^{1}}\\
&  \leq2\left\|  Q_{I}\left(  f_{\mu}^{\ast}(s)\frac{I(s)}{s}\chi
_{(0,1/2)}(s)\right)  \right\|  _{\bar{Y}}+\left\|  f\right\|  _{L^{1}}\text{
\ (by (\ref{ccdd}))}\\
&  \preceq\left\|  f_{\mu}^{\ast}(t)\frac{I(t)}{t}\right\|  _{\bar{X}%
}+\left\|  f\right\|  _{L^{1}}\\
&  \preceq\left\|  f_{\mu}^{\ast}\right\|  _{\bar{X}_{I}},
\end{align*}
where in the last step we estimated $\left\|  f\right\|  _{L^{1}}$ as follows
\begin{align*}
\left\|  f\right\|  _{L^{1}}  &  =\int_{0}^{1}f_{\mu}^{\ast}(t)dt\leq2\int
_{0}^{1/2}f_{\mu}^{\ast}(t)dt\\
&  =\int_{0}^{1/2}f_{\mu}^{\ast}(t)\frac{I(t)}{t}\frac{t}{I(t)}dt\\
&  \leq\frac{2}{I(1/2)}\int_{0}^{1}f_{\mu}^{\ast}(t)\frac{I(t)}{t}dt\\
&  \preceq\left\|  f_{\mu}^{\ast}(t)\frac{I(t)}{t}\right\|  _{\bar{X}}\text{
(by (\ref{nuevadeli})).}%
\end{align*}
From the previous discussion we see that
\begin{align*}
\left\|  f\right\|  _{\bar{Y}}  &  \preceq\left\|  f_{\mu}^{\ast}%
(t)\chi_{(0,1/2)}(t)\right\|  _{\bar{Y}}\\
&  \preceq\left\|  f_{\mu}^{\ast}(t)\frac{I(t)}{t}\right\|  _{\bar{X}}\\
&  =\left\|  f_{\mu}^{\ast}\right\|  _{\bar{X}_{I}}.
\end{align*}
Now, we show that for all $f\in\bar{X},$ with supp$f\subset(0,1/2),$%
\[
\left\|  Q_{I}f\right\|  _{\bar{X}_{I}}\preceq\left\|  f\right\|  _{\bar{X}}.
\]
Indeed, this is equivalent to the boundedness of the operator $\tilde{Q}_{I}$:%
\begin{align*}
\left\|  Q_{I}f\right\|  _{\bar{X}_{I}}  &  =\left\|  \int_{t}^{1}%
f(s)\frac{ds}{I(s)}\right\|  _{\bar{X}_{I}}\\
&  =\left\|  \frac{I(t)}{t}\int_{t}^{1}f(s)\frac{ds}{I(s)}\right\|  _{\bar{X}%
}\\
&  =\left\|  \tilde{Q}_{I}f\right\|  _{\bar{X}}\\
&  \preceq\left\|  f\right\|  _{\bar{X}}.
\end{align*}
Consequently, by the first part of the theorem we have that for all $g\in
Lip(\Omega)$%
\begin{equation}
\left\|  \left(  g-\int_{\Omega}gd\mu\right)  _{\mu}^{\ast}(t)\frac{I(t)}%
{t}\right\|  _{\bar{X}}=\left\|  \left(  g-\int_{\Omega}gd\mu\right)  _{\mu
}^{\ast}(t)\right\|  _{\bar{X}_{I}}\preceq\left\|  \left|  \nabla g\right|
\right\|  _{X}. \label{PPPPP}%
\end{equation}
Finally, combining (\ref{PPPPP}) and (\ref{perdida}) we obtain
\begin{align*}
\left\|  g-\int_{\Omega}gd\mu\right\|  _{Y}  &  =\left\|  \left(
g-\int_{\Omega}gd\mu\right)  _{\mu}^{\ast}(t)\right\|  _{\bar{Y}}\\
&  \preceq\left\|  \left(  g-\int_{\Omega}gd\mu\right)  _{\mu}^{\ast}%
(t)\frac{I(t)}{t}\right\|  _{\bar{X}}\\
&  \leq\left\|  \left|  \nabla g\right|  \right\|  _{X}.
\end{align*}

\textbf{Part (b)} We first show that%
\begin{equation}
\left\|  f\right\|  _{Y}\preceq\left\|  f\right\|  _{LS(X)}+\left\|
f\right\|  _{L^{1}}. \label{harhar}%
\end{equation}
Since $\left(  f_{\mu}^{\ast\ast}\right)  ^{\prime}(t)=-\frac{1}{t}\left(
f_{\mu}^{\ast\ast}(t)-f_{\mu}^{\ast}(t)\right)  ,$ using the fundamental
theorem of Calculus yields
\[
f_{\mu}^{\ast\ast}(t)=\int_{t}^{1/2}\left(  f_{\mu}^{\ast\ast}(s)-f_{\mu
}^{\ast}(s)\right)  \frac{ds}{s}+f_{\mu}^{\ast\ast}(1/2),\text{ \ \ }%
0<t\leq1/2.
\]
Therefore,%
\begin{align*}
\left\|  f_{\mu}^{\ast}(t)\chi_{(0,1/2)}(t)\right\|  _{\bar{Y}}  &
\leq\left\|  \int_{t}^{1/2}\left(  f_{\mu}^{\ast\ast}(s)-f_{\mu}^{\ast
}(s)\right)  \frac{ds}{s}\right\|  _{\bar{Y}}+f_{\mu}^{\ast\ast}(1/2)\left\|
1\right\|  _{\bar{Y}}\\
&  \preceq\left\|  \int_{t}^{1}\frac{I(s)}{s}\left(  f_{\mu}^{\ast\ast
}(s)-f_{\mu}^{\ast}(s)\right)  \chi_{(0,1/2)}(s)\frac{ds}{I(s)}\right\|
_{\bar{Y}}+\left\|  f\right\|  _{L^{1}}\\
&  \preceq\left\|  (f_{\mu}^{\ast\ast}(t)-f_{\mu}^{\ast}(t))\chi
_{(0,1/2)}(t)\frac{I(t)}{t}\right\|  _{\bar{X}}+\left\|  f\right\|  _{L^{1}}\\
&  \preceq\left\|  (f_{\mu}^{\ast\ast}(t)-f_{\mu}^{\ast}(t))\frac{I(t)}%
{t}\right\|  _{\bar{X}}\text{ }+\left\|  f\right\|  _{L^{1}}.
\end{align*}
Consequently,
\begin{align*}
\left\|  f_{\mu}^{\ast}\right\|  _{\bar{Y}}  &  \preceq\left\|  f_{\mu}^{\ast
}(t)\chi_{(0,1/2)}(t)\right\|  _{\bar{Y}}\\
&  \preceq\left\|  (f_{\mu}^{\ast\ast}(t)-f_{\mu}^{\ast}(t))\frac{I(t)}%
{t}\right\|  _{\bar{X}}+\left\|  f\right\|  _{L^{1}}\\
&  =\left\|  f\right\|  _{LS(X)}+\left\|  f\right\|  _{L^{1}}.
\end{align*}
Assume that $\overline{\alpha}_{X}<1$. We are going to prove (\ref{perdida02}%
)$.$ Let $g\in Lip({\Omega)}$. Applying successively (\ref{harhar}),
(\ref{corres}), (\ref{pr1}), (\ref{nuevadeli}), and the fact that $P$ is a
bounded operator on $\bar{X},$ we have
\begin{align*}
\left\|  g-\int_{\Omega}gd\mu\right\|  _{Y}  &  =\left\|  \left(
g-\int_{\Omega}gd\mu\right)  _{\mu}^{\ast}\right\|  _{\bar{Y}}\\
&  \preceq\left\|  g-\int_{\Omega}gd\mu\right\|  _{LS(X)}+\left\|
g-\int_{\Omega}gd\mu\right\|  _{L^{1}}\\
&  \preceq\left\|  P\left(  \left|  \nabla\left(  g-\int_{\Omega}gd\mu\right)
\right|  _{\mu}^{\ast}\right)  \right\|  _{\bar{X}}+\left\|  \left|  \nabla
g\right|  \right\|  _{L^{1}}\\
&  \preceq\left\|  P\left(  \left|  \nabla g\right|  _{\mu}^{\ast}\right)
\right\|  _{\bar{X}}+\left\|  \left|  \nabla g\right|  \right\|  _{\bar{X}}\\
&  \preceq\left\|  \left|  \nabla g\right|  \right\|  _{X}.
\end{align*}
Finally, suppose that $I$ satisfies (\ref{concon}). Then, by Remark
\ref{preferida},%
\[
\left\|  g\right\|  _{LS(X)}\preceq\left\|  \left|  \nabla g\right|  \right\|
_{X},
\]
as we wished to show.
\end{proof}

\subsection{Poincar\'{e} inequalities for the model cases}

In this section we show the equivalence of Poincar\'{e} inequalities and the
boundedness of the isoperimetric Hardy operator $Q_{I}$ for all the model
cases considered in the previous section.

Let $(\Gamma,\varrho)$ denote any of the following probability metric spaces:

\begin{enumerate}
\item  Log concave measures $(\mathbb{R}^{n},d\mu^{\Phi\otimes n})$ (cf.
Section \ref{secc:logconcave}).

\item  The $n-$sphere $(\mathbb{S}^{n},d,\sigma_{n})$ (cf. Section
\ref{secc:sphere}).

\item  An $n_{0}$-dimensional Riemannian Model manifold $\left(
M_{0},d\right)  $ endowed with a model measure $\mu_{0}$. (cf. Section
\ref{secc:ros}).
\end{enumerate}

\begin{theorem}
\label{optimal} Consider the probability space $(\Gamma,\varrho).$ Let
$X=X(\Gamma)$, $Y=Y(\Gamma)$ be r.i. spaces. Then, the following statements
are equivalent

\begin{enumerate}
\item
\begin{equation}
\left\|  f-\int_{\Gamma}fd\varrho\right\|  _{Y}\preceq\left\|  \left|  \nabla
f\right|  \right\|  _{X},\text{ for all }f\in Lip(\Gamma). \label{poin}%
\end{equation}

\item
\begin{equation}
\left\|  \int_{t}^{1}f(s)\frac{ds}{I_{\varrho}(s)}\right\|  _{\bar{Y}}%
\preceq\left\|  f\right\|  _{\bar{X}},\text{ \ \ for all positive f}\in\bar
{X},\text{ with }supp(f)\subset(0,1/2). \label{poinpoin}%
\end{equation}
\end{enumerate}
\end{theorem}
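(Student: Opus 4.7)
The plan is to establish each direction separately. For $(2)\Rightarrow(1)$, I would simply observe that (\ref{poinpoin}) is exactly the hypothesis (\ref{har}) of Theorem \ref{opti00} applied with the isoperimetric estimator $I_\varrho$, so the conclusion (\ref{poin}) is immediate.

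For the converse $(1)\Rightarrow(2)$, the strategy is to manufacture, for each positive $f\in\bar X$ with $\operatorname{supp} f\subset(0,1/2)$, a concrete test function $g\in Lip(\Gamma)$ whose rearrangement equals $Q_{I_\varrho}f$ and whose gradient rearranges to $f$. Without loss I may assume $f$ is non-increasing (since $\|f\|_{\bar X}=\|f^*\|_{\bar X}$ and $\operatorname{supp} f^*\subset(0,1/2)$ is preserved); by a standard truncation argument I may further take $f$ bounded and supported in a compact subinterval of $(0,1/2)$, recovering the general case afterwards by monotone convergence.

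The key input is that each of our three model spaces carries a measure-preserving map $p:\Gamma\to[0,1]$, namely $p(x)=H(x_1)$ in the log-concave setting, $p(\theta_1,\ldots,\theta_n)=\Phi_n(\theta_1)$ on the sphere, and the map $p$ of Section \ref{secc:ros} for Ros' manifolds. The calculations underlying Theorems \ref{polya-szego}, \ref{polya-szego-esfe} and their Ros analogue show that $|\nabla p(x)|=I(p(x))$ a.e., where $I=I_{\mu^\Phi}$ in the log-concave case and $I=I_\varrho$ in the other two; by (\ref{tenso}) we have $I\simeq I_\varrho$ throughout. I would then set $h(t)=\int_t^1 f(s)\,ds/I(s)$ and $g(x)=h(p(x))$. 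Because $h$ is locally Lipschitz, non-increasing, and vanishes on $[1/2,1]$, the chain rule gives, a.e.\ on $\Gamma$,
\[
|\nabla g(x)|=|h'(p(x))|\,|\nabla p(x)|=\frac{f(p(x))}{I(p(x))}\cdot I(p(x))=f(p(x)),
\]
while the measure-preserving property of $p$ yields $g^*_\varrho=h=Q_I f$ and $|\nabla g|^*_\varrho=f$.

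To conclude, note that $g\equiv 0$ on $\{p\geq 1/2\}$, which has $\varrho$-measure $1/2$, so zero is a median of $g$. Lemma \ref{media} together with the assumed Poincar\'e inequality (\ref{poin}) then gives
\[
\|Q_I f\|_{\bar Y}=\|g\|_Y=\|g-0\|_Y\leq 3\Bigl\|g-\int_\Gamma g\,d\varrho\Bigr\|_Y\preceq\||\nabla g|\|_X=\|f\|_{\bar X},
\]
which is the desired bound (replacing $I$ by $I_\varrho$ costs only a multiplicative constant via (\ref{tenso})). The main technical point is ensuring that the a.e.\ identity $|\nabla g|=f\circ p$ genuinely holds under only mild regularity on $f$; the truncation reduction plus the explicit smoothness of $p$ on each model space handles this cleanly, so I expect no serious obstacle.
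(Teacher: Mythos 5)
Your proposal follows the same strategy as the paper: use Theorem~\ref{opti00} for $(2)\Rightarrow(1)$, and for $(1)\Rightarrow(2)$ build the explicit test function $g=h\circ p$ with $h=Q_I f$ and the measure-preserving map $p$ satisfying $|\nabla p|=I\circ p$, then note $0$ is a median and invoke Lemma~\ref{media}. You present the three model cases in a unified way where the paper writes them out separately, but the construction, the key identity for $|\nabla p|$, and the use of (\ref{tenso}) are all identical to the paper's.

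One small remark: the reduction ``WLOG $f$ is non-increasing'' is neither needed nor properly justified as stated. Replacing $f$ by $f^*$ preserves $\|f\|_{\bar X}$ and the support condition, but it does not a priori control $\|Q_I f\|_{\bar Y}$ by $\|Q_I f^*\|_{\bar Y}$ (the kernel of $Q_I$ is not non-increasing in $s$ on all of $(0,1)$ because of the cutoff at $s=t$). Fortunately the argument never actually uses this: $h(t)=\int_t^1 f(s)\,ds/I(s)$ is automatically non-increasing for any positive $f$, so $g^*_\varrho=h$ holds without assumption, and $\||\nabla g|\|_X=\|(f\circ p)^*\|_{\bar X}=\|f^*\|_{\bar X}=\|f\|_{\bar X}$ needs only equimeasurability, not $f=f^*$. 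Your truncation to bounded, compactly supported $f$ followed by Fatou/monotone convergence is a legitimate added precaution (it guarantees $g\in Lip(\Gamma)$, a point the paper handles implicitly), and it should be kept; the non-increasing reduction can simply be dropped.
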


\begin{proof}
$\mathbf{(2)\rightarrow(1)}$ was proved in Theorem \ref{opti00}.

We naturally divide the proof of the implications $\mathbf{(1)\rightarrow(2)}$
in three cases as follows:

\textbf{Case a)} Log concave measures.

Given a\ positive measurable function $f\ $with $suppf$ $\subset(0,1/2),$
consider
\[
F(t)=\int_{t}^{1}f(s)\frac{ds}{I_{\mu^{\Phi}}(s)},\text{ \ \ \ }t\in(0,1),
\]
and define
\[
u(x)=F(H(x_{1})),\text{ \ \ \ \ \ }x\in\mathbb{R}^{n}.
\]
Then,%
\[
\left|  \nabla u(x)\right|  =\left|  \frac{\partial}{\partial x_{1}%
}u(x)\right|  =\left|  -f(H(x_{1}))\frac{H^{\prime}(x_{1})}{I_{\mu^{\Phi}%
}(H(x_{1}))}\right|  =f(H(x_{1})).
\]
Let $N$ be a Young's function and let $s=H(x_{1})$. Then,
\begin{align*}
\int_{\mathbb{R}^{n}}N(f(H(x_{1})))d\mu &  =\int_{\mathbb{R}}N(f(H(x_{1}%
)))d\mu^{\Phi}\\
&  =\int_{0}^{1}N(f(s))ds.
\end{align*}
Therefore,
\begin{equation}
\left|  \nabla u\right|  _{\mu}^{\ast}(t)=f^{\ast}(t), \label{grany}%
\end{equation}
and
\begin{equation}
u_{\mu}^{\ast}(t)=\int_{t}^{1}f(s)\frac{ds}{I_{\mu^{\Phi}}(s)}. \label{grany1}%
\end{equation}
By Lemma \ref{media}, (\ref{poin}) is equivalent to
\[
\left\|  u-m_{e}\right\|  _{Y}\preceq\left\|  \left|  \nabla u\right|
\right\|  _{X},
\]
where $m_{e}$ is a median of $u.$ Since$\ \mu\left\{  u=0\right\}  \geq1/2,$
it follows that $0$ is a median of $u$. Consequently,%
\begin{equation}
\left\|  u\right\|  _{Y}\preceq\left\|  \left|  \nabla u\right|  \right\|
_{X}. \label{grany2}%
\end{equation}
From (\ref{grany}) and (\ref{grany1}) it follows that
\[
\left\|  u\right\|  _{Y}=\left\|  u_{\mu}^{\ast}\right\|  _{\bar{Y}}\text{ and
}\left\|  \left|  \nabla u\right|  \right\|  _{X}=\left\|  \left|  \nabla
u\right|  _{\mu}^{\ast}\right\|  _{\bar{X}}=\left\|  f\right\|  _{\bar{X}},
\]
therefore, inserting this information back in (\ref{grany2}), and since (see
Section \ref{tenso})
\[
I_{\varrho}\simeq I_{\mu^{\Phi}}%
\]
we obtain (\ref{poinpoin}).

\textbf{Case b)} The $n-$sphere $(\mathbb{S}^{n},d,\sigma_{n})$.

The argument given in case a) can be repeated verbatim with the following
changes: Given a\ positive measurable function $f$ $\ $with $suppf$
$\subset(0,1/2),$ let
\[
F(t)=\int_{t}^{1}f(s)\frac{ds}{I_{\sigma_{n}}(s)},\text{ \ \ \ }t\in(0,1),
\]
and define $u$ (in spherical coordinates) by
\[
u(\theta_{1},.....,\theta_{n})=F(\Phi(\theta_{1})),\text{ \ \ \ \ \ }%
(\theta_{1},.....,\theta_{n})\in\mathbb{S}^{n}.
\]

\textbf{Case c) }An $n_{0}$-dimensional Riemannian Model manifold $\left(
M_{0},d\right)  $ endowed with a model measure $\mu_{0}.$

This case was proved in \cite{mmmazya}, but we include a brief sketch of its
proof for the sake of completeness. As in Section \ref{secc:ros}, we consider%
\[%
\begin{array}
[c]{c}%
p:M_{0}\rightarrow\lbrack0,1]\\
x\in\partial D^{t}\rightarrow t.
\end{array}
\]
Then (see \cite{mmmazya} for the details) $p\in Lip(M_{0})$ with $\left|
\nabla p(x)\right|  =I_{\mu_{0}}(p(x))$ and the map $p:\left(  M_{0},\mu
_{0}\right)  \rightarrow\left(  \lbrack0,1],ds\right)  $ is a
measure-preserving transformation.

Let $f\in\bar{X}$ be a positive function, with supp$f\subset(0,1/2),$ and
define
\[
F(x)=\int_{p(x)}^{1}f(s)\frac{ds}{I_{\mu_{0}}(s)}.
\]
$F\in Lip(M_{0}),$ and
\[
\left|  \nabla F(x)\right|  =f(p(x))\frac{1}{I_{\mu_{0}}(p(x))}\left|  \nabla
p(x)\right|  =f(p(x)).
\]
Moreover, since $p$ is a measure-preserving transformation, we have
\[
\left|  F\right|  _{\mu_{0}}^{\ast}(s)=\int_{t}^{1}f(s)\frac{ds}{I_{\mu_{0}%
}(s)}\text{ \ \ and \ \ }\left|  \nabla F\right|  _{\mu_{0}}^{\ast}%
(s)=f^{\ast}(s)\text{.}%
\]
Now since$\ \mu_{0}\left\{  F=0\right\}  \geq1/2,$ $0$ is a median of $F$.
Therefore, from
\[
\left\|  F-0\right\|  _{Y}\preceq\left\|  \left|  \nabla F\right|  \right\|
_{X},
\]
we obtain
\[
\left\|  \int_{t}^{1}f(s)\frac{ds}{I_{\mu_{0}}(s)}\right\|  _{\bar{Y}}%
\preceq\left\|  f\right\|  _{\bar{X}}.
\]
\end{proof}

\begin{example}
\label{exex}Let $\alpha\geq0,$ \ $p\in\lbrack1,2]$, $\gamma=\exp
(2\alpha/(2-p)),$ and consider the family of log concave measures
\[
\mu_{p,\alpha}=Z_{p,\alpha}^{-1}\exp\left(  -\left|  x\right|  ^{p}%
(\log(\gamma+\left|  x\right|  )^{\alpha}\right)  dx.
\]
Using estimate (\ref{qq}) (see \cite{BCR} and \cite{BCR1}) we get
\begin{equation}
I_{\mu_{p,\alpha}^{\otimes n}}(s)\simeq s\left(  \log\frac{1}{s}\right)
^{1-\frac{1}{p}}\left(  \log\log\left(  e+\frac{1}{s}\right)  \right)
^{\frac{\alpha}{p}}=s\beta_{p,\alpha}(s),\text{ \ \ \ \ \ }0<s\leq1/2,
\label{asim}%
\end{equation}
moreover the constants that appear in equivalence (\ref{asim}) are independent
of $n.$ The corresponding operators $Q_{\mu_{p,\alpha}^{\otimes n}}$ and
$\tilde{Q}_{\mu_{p,\alpha}^{\otimes n}}$ associated with $\mu_{p,\alpha
}^{\otimes n}$ are given by%
\[
Q_{I_{\mu_{p,\alpha}^{\otimes n}}}f(t)\simeq\int_{t}^{1/2}f(s)\frac{ds}%
{s\beta_{p,\alpha}(s)}\text{ \ and \ }\tilde{Q}_{I_{\mu_{p,\alpha}^{\otimes
n}}}f(t)\simeq\beta_{p,\alpha}(t)\int_{t}^{1/2}f(s)\frac{ds}{s\beta_{p,\alpha
}(s)}.
\]
Given $X$ a r.i. space such that $\underline{\alpha}_{X}>0,$ then the operator
\ $\tilde{Q}_{I_{\mu_{p,\alpha}^{\otimes n}}}$ is bounded on $X.$ Indeed, pick
$\underline{\alpha}_{X}>a>0,$ then since $t^{a}\beta_{p,\alpha}(t)$ is
increasing near zero, we get
\[
\tilde{Q}_{I_{\mu_{p,\alpha}^{\otimes n}}}f(t)\simeq\frac{t^{a}\beta
_{p,\alpha}(t)}{t^{a}}\int_{t}^{1/2}f(s)\frac{ds}{s\beta_{p,\alpha}(s)}%
\preceq\frac{1}{t^{a}}\int_{t}^{1/2}s^{a}f(s)\frac{ds}{s}=Q_{a}f(t).
\]
We conclude noting that $Q_{a}$ is bounded on $X$ on account of the fact that
$\underline{\alpha}_{X}>a$ (see Remark \ref{alcance}).
\end{example}

\begin{example}
In the case of the sphere, the operators $Q_{I_{\sigma_{n}}}$ and $\tilde
{Q}_{I_{\sigma_{n}}}$ associated with $\sigma_{n}$ are given by%
\[
Q_{I_{\sigma_{n}}}f(t)\simeq\int_{t}^{1/2}f(s)s^{1/n}\frac{ds}{s}\text{ \ and
\ }\tilde{Q}_{I_{\sigma_{n}}}f(t)\simeq t^{1-1/n}\int_{t}^{1/2}f(s)s^{1/n}%
\frac{ds}{s}.
\]
Given $X$ a r.i. the operator \ $\tilde{Q}_{I_{\sigma_{n}}}$ is bounded on $X$
if and only if $\underline{\alpha}_{X}>1/n.$
\end{example}

\section{Poincar\'{e} Inequalities and Cheeger's
inequality\label{secc:emanuel}}

\subsection{Poincar\'{e} inequalities and Hardy operators\label{secc:hardy}}

The study of the model cases suggests the possibility of characterizing sharp
Poincar\'{e} inequalities in terms of the boundedness of the Hardy operators
$Q_{I}.$ However, for general metric spaces this is not possible. In fact (cf.
\cite{mmmazya} for the details), for a given $0<\beta<1/2,$ consider%
\[
I(s)=s^{1-\beta},\text{ \ \ \ }0\leq s\leq1/2.
\]
Let $\Omega$ be a $2(1-\beta)-$John domain on $\mathbb{R}^{2},$ $(\left|
\Omega\right|  =1)$. The isoperimetric profile $I_{\Omega}(s)$ of $\Omega$
satisfies (cf. \cite{HK})
\[
I_{\Omega}(s)\simeq I(s),\text{ \ \ \ }0\leq s\leq1/2,
\]
and (cf. \cite{KM1})
\[
\left\|  g-\int_{\Omega}g\right\|  _{L^{\frac{4}{1-2\beta}}}\preceq\left\|
\left|  \nabla g\right|  \right\|  _{L^{2}}.
\]
However, the operator%
\[
Q_{I_{\Omega}}f(t)=\int_{t}^{1/2}f(u)\frac{du}{I_{\Omega}(u)}%
\]
is not bounded from $L^{2}\ $to $L^{\frac{4}{1-2\beta}}.$ In fact, the extra
properties required on the metric spaces are not related with the form of the
isoperimetric profile. Indeed, it is possible to build a compact surface of
revolution $M$ such that there exists a constant $c$ depending only of $I$
such that%
\[
cI(s)\leq I_{M}(s)\leq I(s),\text{ \ \ \ }0\leq s\leq1/2,
\]
and, such that for any pair of r.i. spaces $X,Y$ on $M,$ the Poincar\'{e}
inequality
\[
\left\|  g-\int_{M}gdVol_{M}\right\|  _{Y}\preceq\left\|  \left|  \nabla
g\right|  \right\|  _{X},\ \ g\in Lip(M).
\]
is equivalent to
\[
Q_{I_{M}}:\bar{X}\rightarrow\bar{Y}\text{ is bounded.}%
\]
The present discussion motivated the developments in the next sections.

\subsection{Isoperimetric Hardy type\label{secc:isohar}}

We single out probability metric spaces that are suitable for our analysis.

\begin{definition}
\label{def:isohar}We shall say that a probability metric space $(\Omega
,d,\mu)$ is of isoperimetric Hardy type if for any given isoperimetric
estimator $I,$ the following are equivalent for all r.i. spaces $X=X(\Omega)$,
$Y=Y(\Omega)$.

\begin{enumerate}
\item  There exists a constant $c=c(X,Y)$ such that for all $f\in
Lip(\Omega)$
\[
\left\|  f-\int_{\Omega}fd\mu\right\|  _{Y}\leq c\left\|  \left|  \nabla
f\right|  \right\|  _{X}.
\]

\item  There exists a constant $c_{1}=c_{1}(X,Y)>0$ such that for all positive
functions $f\in\bar{X},$ with $supp(f)\subset(0,1/2)$ we have
\[
\left\|  Q_{I}f\right\|  _{\bar{Y}}\leq c_{1}\left\|  f\right\|  _{\bar{X}},
\]
where $Q_{I}$ is the isoperimetric Hardy operator
\begin{equation}
Q_{I}f(t)=\int_{t}^{1}f(s)\frac{ds}{I(s)}. \label{olvidada}%
\end{equation}
\end{enumerate}
\end{definition}

\begin{example}
\label{examarkao}By Theorem \ref{optimal} all the model spaces are of Hardy
isoperimetric type.
\end{example}

Our first application was motivated by the remarkable recent work of E. Milman
(cf. \cite{MiE}, \cite{mie2}, \cite{miemie}) on the equivalence of Cheeger's
inequality, Poincar\'{e}'s inequality and concentration, under suitable
convexity conditions. More precisely, E. Milman has shown that\footnote{We
refer to E. Milman's papers for an account of the history of the problem.}

\begin{theorem}
\label{teoem1}(E. Milman) Let $(\Omega,d,\mu)$ be a space satisfying E.
Milman's convexity conditions (cf. Example \ref{ej:emanu} above). Then
following statements are equivalent

\noindent(E1) Cheeger's inequality: there exists a positive constant $C$ such
that%
\[
I_{(\Omega,d,\mu)}\geq Ct,\ \ \ t\in(0,1/2].
\]
(E2) Poincar\'{e}'s inequality: there exists a positive constant $P$ such that
for all $f\in Lip(\Omega),$%
\[
\left\|  f-m_{e}\right\|  _{L^{2}(\Omega)}\leq P\left\|  \left|  \nabla
f\right|  \right\|  _{L^{2}(\Omega)}.
\]
(E3) Exponential concentration: there exist positive constants $c_{1},c_{2}$
such that for all $f\in Lip(\Omega)$ with $\left\|  f\right\|  _{Lip(\Omega
)}\leq1,$%
\[
\mu\{\left|  f-m_{e}\right|  >t\}\leq c_{1}e^{-c_{2}t},\text{ \ }t\in(0,1).
\]
(E4) First moment inequality: there exists a positive constant $F$ such that
for all $f\in Lip(\Omega)$ with $\left\|  f\right\|  _{Lip(\Omega)}\leq1,$%
\[
\left\|  f-m_{e}\right\|  _{L^{1}(\Omega)}\leq F.
\]
\end{theorem}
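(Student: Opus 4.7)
My plan is to prove the cyclic chain $(E1)\Rightarrow(E2)\Rightarrow(E3)\Rightarrow(E4)\Rightarrow(E1)$, in which only the last implication invokes E.~Milman's convexity hypothesis essentially.

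For $(E1)\Rightarrow(E2)$ I would apply Theorem \ref{opti00} with $X=Y=L^{2}$: the Cheeger bound $I(s)\geq Cs$ on $(0,1/2]$ makes the isoperimetric Hardy operator $Q_{I}$ of \eqref{olvidada} pointwise dominated by $(1/C)Q$, which is bounded on $L^{2}$ since $\underline{\alpha}_{L^{2}}=1/2>0$ (cf.~\eqref{alcance}); the resulting mean-form Poincar\'{e} inequality is converted to the median form (E2) by Lemma \ref{media}. The implication $(E3)\Rightarrow(E4)$ is immediate on a probability space:
\[
\int_{\Omega}|f-m_{e}|\,d\mu=\int_{0}^{\infty}\mu\{|f-m_{e}|>t\}\,dt\leq\int_{0}^{\infty}c_{1}e^{-c_{2}t}\,dt=c_{1}/c_{2}.
\]

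For $(E2)\Rightarrow(E3)$ I would run the classical Gromov--Milman truncation argument. Fix a $1$-Lipschitz $f$ with median $0$, let $F(t)=\mu\{f>t\}$, and for $t\geq 0$, $s>0$ set $g=((f-t)_{+})\wedge s$. Then $g$ has Lipschitz constant $\leq 1$, median $0$ (since $F(t)\leq 1/2$), and $|\nabla g|$ is supported in $\{t<f<t+s\}$; applying (E2) to $g$,
\[
s^{2}F(t+s)\leq\|g\|_{L^{2}}^{2}\leq P^{2}\|\nabla g\|_{L^{2}}^{2}\leq P^{2}F(t).
\]
Choosing $s=2P$ gives $F(t+2P)\leq F(t)/4$; iterating from $t=0$ yields $F(t)\preceq e^{-(\ln 2)t/P}$, and the symmetric argument applied to $-f$ controls the lower tail, giving (E3).

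The main obstacle is $(E4)\Rightarrow(E1)$, and this is precisely where the convexity hypothesis becomes indispensable. My plan is to follow E.~Milman's semigroup strategy: under $\mathrm{Ric}_{g}+\mathrm{Hess}_{g}(\Psi)\geq 0$ the diffusion semigroup $P_{t}$ generated by the weighted Laplacian $\Delta_{\mu}$ satisfies a Bakry--\'{E}mery gradient contraction, which produces Lipschitz smoothings of characteristic functions with quantitative control of $\|\nabla P_{t}\chi_{A}\|_{L^{\infty}}$ and $\|P_{t}\chi_{A}-\mu(A)\|_{L^{1}}$ in terms of $t$ and $\mu^{+}(A)$. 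For a Borel set $A$ with $\mu(A)\in(0,1/2]$, inserting the renormalized smoothing $P_{t}\chi_{A}/\|\nabla P_{t}\chi_{A}\|_{L^{\infty}}$ into (E4) and optimizing in $t$ forces $\mu^{+}(A)\geq c\,\mu(A)$, i.e.\ Cheeger. The delicate technical step is producing these matching sharp semigroup/perimeter estimates, which is the true heart of E.~Milman's argument; without the convexity input, (E4) corresponds via Theorem \ref{opti00} only to the integrability condition $\int_{0}^{1/2}s/I(s)\,ds<\infty$, which is strictly weaker than Cheeger and explains why the curvature hypothesis cannot be dispensed with.
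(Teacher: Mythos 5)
Your proposal addresses a theorem that the paper cites but does not itself prove: the authors attribute Theorem~\ref{teoem1} to E.~Milman and instead prove the Hardy-type analog, Theorem~\ref{teoema1}, where the convexity hypothesis is replaced by the abstract assumption that $(\Omega,d,\mu)$ is of isoperimetric Hardy type. So any genuine comparison must be with that theorem and with the semigroup material of Section~\ref{secc:semi}, which supplies the missing geometric input. With that caveat, your cyclic chain $(E1)\Rightarrow(E2)\Rightarrow(E3)\Rightarrow(E4)\Rightarrow(E1)$ is a reasonable architecture, and the first three implications are correct and essentially self-contained: $(E1)\Rightarrow(E2)$ via pointwise domination $Q_{I}\preceq C^{-1}Q$ and Theorem~\ref{opti00} plus Lemma~\ref{media} is exactly the mechanism the paper uses in Theorem~\ref{teoema1}; your $(E2)\Rightarrow(E3)$ is the classical Gromov--Milman truncation, which is different from the paper (which goes $(E1)\Leftrightarrow(E3)$ directly through $Q_{I}:L^{\infty}\to\exp L$ and the isoperimetric Hardy property) but is correct and has the advantage of working on any probability metric space without any Hardy-type hypothesis; $(E3)\Rightarrow(E4)$ by tail integration is the same as the paper's.

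The genuine gap is $(E4)\Rightarrow(E1)$. You correctly identify it as the crux where convexity enters, but you only sketch the semigroup strategy and leave unstated the quantitative estimates (on $\lVert\nabla P_{t}\chi_{A}\rVert_{L^{\infty}}$ and $\lVert P_{t}\chi_{A}-\mu(A)\rVert_{L^{1}}$) that would have to be produced. Note that the paper does contain the machinery you would need: the theorem in Section~\ref{secc:semi} shows, under Bakry--\'{E}mery $(\ref{be})$, that a $(Y,X)$-Poincar\'{e} inequality with $\bar{\alpha}_{X}<1/2$ yields $I(t)\geq c_{1}t(1-t)\,\phi_{Y}(t(1-t))/\phi_{X}(t(1-t))$. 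Taking $X=L^{\infty}$, $Y=L^{1}$ (which is exactly $(E4)$ after Lemma~\ref{media}) gives $I(t)\succeq t^{2}$, and then the concavity bootstrap quoted at the very end of the paper's proof of Theorem~\ref{teoema1} ($I(t)/t$ decreasing plus symmetry about $1/2$) upgrades this to $I(t)\succeq t$. Assembling those two pieces would close your gap. Finally, your concluding remark that under Theorem~\ref{opti00} alone $(E4)$ ``corresponds only to $\int_{0}^{1/2}s/I(s)\,ds<\infty$, which is strictly weaker than Cheeger'' is misleading on two counts: the converse implication in Theorem~\ref{opti00} requires the isoperimetric Hardy hypothesis, and more importantly, under the paper's standing Condition~\ref{pedida} (concavity, $I(0)=0$, symmetry) that integrability condition is in fact \emph{not} strictly weaker --- the concavity argument already promotes it to Cheeger, which is precisely how the paper closes $(E4)\Rightarrow(E1)$ in Theorem~\ref{teoema1}.
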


Moreover, E. Milman also shows

\begin{theorem}
\label{teoem2}Let $(\Omega,d,\mu)$ be a space satisfying E. Milman's convexity
conditions. Let $1\leq q<\infty,$ and let $N$ be a Young's function such that
$\frac{N(t)^{1/q}}{t}$ is non-decreasing, and there exists $\alpha>\max
\{\frac{1}{q}-\frac{1}{2},0\}$ such that $\frac{N(t^{\alpha})}{t}$
non-increasing. Then, the following statements are equivalent:

\noindent(E5) $(L_{N},L^{q})$ Poincar\'{e} inequality holds: there exists a
positive constant $P$ such that for all $f\in Lip(\Omega)$%
\[
\left\|  f-m_{e}\right\|  _{L_{N}(\Omega)}\leq P\left\|  \left|  \nabla
f\right|  \right\|  _{L^{q}(\Omega)}.
\]
(E6) Any isoperimetric profile estimator $I$ satisfies: there exists a
constant $c>0$ such that%
\[
I(t)\geq c\frac{t^{1-1/q}}{N^{-1}(1/t)},\text{ \ }t\in(0,1/2].
\]
\end{theorem}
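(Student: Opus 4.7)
The plan is to reduce both directions of the equivalence to properties of the isoperimetric Hardy operator $Q_I f(t) = \int_t^1 f(s)\, ds/I(s)$ developed in Section~\ref{secc::po}. Under E.\ Milman's convexity conditions the profile $I = I_{(\Omega,d,\mu)}$ is concave (Example~\ref{ej:emanu}), so Condition~\ref{pedida} holds and the machinery of Theorem~\ref{opti00} is available.

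For the direction $(E6) \Rightarrow (E5)$ I would apply Theorem~\ref{opti00} with $X = L^{q}$ and $Y = L_N$. It suffices to verify its hypothesis~(\ref{har}), namely $\|Q_I f\|_{L_N} \preceq \|f\|_{L^q}$ for positive $f$ supported in $(0,1/2)$. Using $(E6)$ in the form $1/I(s) \leq c^{-1} s^{1/q-1} N^{-1}(1/s)$, this reduces to the weighted Hardy-Orlicz inequality
$$
\Bigl\| \int_t^{1} f(s)\, s^{1/q-1} N^{-1}(1/s)\, ds \Bigr\|_{L_N(0,1)} \preceq \|f\|_{L^q(0,1)}.
$$
The two hypotheses on $N$ are calibrated precisely for this: the condition that $N(t)^{1/q}/t$ be non-decreasing encodes the local embedding $L_N \hookrightarrow L^q$, while the condition that $N(t^{\alpha})/t$ be non-increasing with $\alpha > \max\{1/q-1/2,0\}$ bounds the growth of $N$ (hence the decay of $N^{-1}(1/s)$ as $s\to 0$), placing the upper Boyd-type index of $L_N$ in the range where Bliss-type weighted Hardy-Orlicz inequalities are known to hold.

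For $(E5) \Rightarrow (E6)$ I would test the Poincar\'e inequality on Lipschitz functions adapted to isoperimetric sets. Given a Borel set $A$ with $\mu(A) = t \leq 1/2$, the naive cutoff $(1-d(\cdot,A)/\epsilon)_+$ works only when $q=1$, because its $L^q$-gradient norm is $\epsilon^{1/q-1}\mu(A_\epsilon \setminus A)^{1/q}$, which blows up for $q>1$. I would use instead a power profile $f_\epsilon(x) = (1 - d(x,A)/\epsilon)_+^{1/\alpha}$, with $\alpha$ taken from the hypothesis on $N$. Since $f_\epsilon \geq \chi_A$ and $0$ is a median of $f_\epsilon$ for $\epsilon$ small, the fundamental function bound gives $\|f_\epsilon - m_e\|_{L_N} \geq \phi_{L_N}(t) = 1/N^{-1}(1/t)$. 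A co-area estimate then controls $\||\nabla f_\epsilon|\|_{L^q}$ in terms of $\mu^+(A)^{1/q}$, the $\alpha$-condition being exactly what is needed for the boundary-layer integral to converge. Assembling the two bounds via $(E5)$ and sending $\epsilon \to 0$ yields $\mu^+(A) \geq c\, t^{1-1/q}/N^{-1}(1/t)$, which is $(E6)$.

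The principal obstacle lies in the $(E5) \Rightarrow (E6)$ direction: the parameters of the power profile and the target Orlicz space must align so that the upper bound on $\||\nabla f_\epsilon|\|_{L^q}$ and the lower bound on $\|f_\epsilon - m_e\|_{L_N}$ conspire to reproduce the scale $t^{1-1/q}/N^{-1}(1/t)$ exactly. The exponent $\alpha > \max\{1/q-1/2,0\}$ is the minimal regularity for which this alignment succeeds, the shift by $1/2$ for $q<2$ arising from a Cauchy-Schwarz step in the gradient estimate. A cleaner route is to assume $(\Omega,d,\mu)$ is of isoperimetric Hardy type: then $(E5)$ immediately forces $Q_I : L^q \to L_N$ to be bounded, and testing this boundedness on $f=\chi_{(t/2,t)}$ together with the fundamental function bound on $\phi_{L_N}$ gives $(E6)$ directly. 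As noted in the introduction, whether E.\ Milman's convexity conditions entail isoperimetric Hardy type remains an open problem.
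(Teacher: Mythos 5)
Your ``cleaner route'' reproduces essentially verbatim what the paper actually proves, but for a different theorem: the unlabeled theorem following Theorem~\ref{teoema1} establishes $(E5)\Leftrightarrow(E6)$ under the additional hypothesis that $(\Omega,d,\mu)$ is of isoperimetric Hardy type, proving $(E5)\Rightarrow(E6)$ by testing the resulting boundedness of $Q_I\colon\bar X\to\bar Y$ on $f=\chi_{(0,r)}$ and comparing with the fundamental function $\phi_{L_N}$, and proving $(E6)\Rightarrow(E5)$ through the chain $Q_If(t)\preceq Q\bigl(f(s)s^{1/q}/\phi_{L_N}(s)\bigr)(t)$, a Maz'ya--Muckenhoupt weighted Hardy inequality, and the embedding $L_N\hookrightarrow\Lambda(\phi_{L_N},q)$ valid because $N(t)/t^q$ is non-decreasing. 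You are also right that this does not settle Theorem~\ref{teoem2} as stated: the theorem assumes only E.~Milman's convexity conditions, and the paper attributes that version to E.~Milman rather than deriving it from the Hardy operator.

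Your primary proposed route for $(E5)\Rightarrow(E6)$, the power-profile cut-off $f_\epsilon=(1-d(\cdot,A)/\epsilon)_+^{1/\alpha}$, does not work for $q>1$. The gradient of $f_\epsilon$ is supported on the layer $A_\epsilon\setminus A$, whose $\mu$-measure is of order $\epsilon\,\mu^+(A)$ as $\epsilon\to0$, and its magnitude there is of order $1/\epsilon$; the co-area formula then gives $\||\nabla f_\epsilon|\|_{L^q}\simeq c(\alpha,q)\,\epsilon^{1/q-1}\mu^+(A)^{1/q}$. This diverges as $\epsilon\to0$ for every $q>1$, no matter how $\alpha$ is chosen, so letting $\epsilon\to0$ in $(E5)$ yields no lower bound on $\mu^+(A)$. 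The exponent $\alpha>\max\{1/q-1/2,0\}$ is not a cut-off regularity threshold, and the $1/2$ shift does not come from a Cauchy--Schwarz step in a boundary-layer estimate; in the paper's argument the $\alpha$-condition is used exclusively in the $(E6)\Rightarrow(E5)$ direction, to verify the Muckenhoupt condition~(\ref{cota}) for the weighted Hardy inequality.

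The tool the paper actually offers for $(E5)\Rightarrow(E6)$ under E.~Milman's convexity alone is the semigroup argument of Section~\ref{secc:semi}: test the Poincar\'e inequality on $P_t(\chi_A-\mu(A))$, use the Bakry--Ledoux bound $2t|\nabla P_tf|^2\leq P_tf^2-(P_tf)^2$ together with boundedness of $P_t$ on a convexification of $X$ (or the Hardy estimate when $\bar{\alpha}_X<1/2$), and optimize in $t$. That theorem gives $I(t)\geq c_1\,t(1-t)\,\phi_Y(t(1-t))/\phi_X(t(1-t))$; with $X=L^q$, $Y=L_N$, $\phi_X(s)=s^{1/q}$, $\phi_Y(s)=1/N^{-1}(1/s)$ and $s=t(1-t)\simeq t$ for $t\le1/2$, this is precisely $(E6)$. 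If you want a proof of Theorem~\ref{teoem2} as stated, rather than of its isoperimetric-Hardy analogue, the semigroup route is the one to develop; the cut-off approach is a dead end for $q>1$.
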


Milman approaches these results using a variety of different tools including
the semigroup approach of Ledoux (\cite{led1}, \cite{led2}, \cite{led3}).

We shall show a simple proof that these equivalences hold for probability
metric spaces of Hardy type. On the other hand at this writing the precise
connection between isoperimetric Hardy type and convexity remains an open problem.

\begin{theorem}
\label{teoema1}Suppose that $(\Omega,d,\mu)$ is a metric probability space of
isoperimetric Hardy type. Then
\[
(E1)\Leftrightarrow(E2)\Leftrightarrow(E3)\Leftrightarrow(E4).
\]
\end{theorem}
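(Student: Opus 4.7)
The plan is to leverage the isoperimetric Hardy type hypothesis of Definition~\ref{def:isohar} to reinterpret the Poincar\'e conditions (E2) and (E4) as boundedness statements for the isoperimetric Hardy operator $Q_I$ on concrete r.i.\ spaces over $(0,1)$ (with $I$ the isoperimetric profile of $(\Omega,d,\mu)$), and then to close the equivalence through operator-theoretic and elementary arguments. Applying Definition~\ref{def:isohar} with the pairs $(X,Y)=(L^2,L^2)$ and $(X,Y)=(L^\infty,L^1)$, and using Lemma~\ref{media} to pass freely between mean and median, one obtains
\begin{align*}
\text{(E2)} &\iff Q_I:L^2\to L^2\text{ is bounded on functions supported in }(0,1/2),\\
\text{(E4)} &\iff Q_I:L^\infty\to L^1\text{ is bounded on functions supported in }(0,1/2).
\end{align*}
A direct Fubini computation (with the extremal choice $f=\chi_{(0,1/2)}$) shows that the second operator bound reduces to the single scalar condition $\int_0^{1/2} s/I(s)\,ds<\infty$.

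I would then close the cycle (E1)$\Rightarrow$(E2)$\Rightarrow$(E4)$\Rightarrow$(E1) as follows. For (E1)$\Rightarrow$(E2), Cheeger's inequality $I(s)\ge Cs$ on $(0,1/2]$ yields the pointwise majorization $Q_If(t)\le C^{-1}\int_t^1 f(s)\,ds/s = C^{-1}Qf(t)$, and $Q$ is bounded on $L^2$ by (\ref{alcance}) since $\underline{\alpha}_{L^2}=1/2>0$. For (E2)$\Rightarrow$(E4), the inclusions $L^\infty\subset L^2\subset L^1$ (valid because $\mu$ is a probability) combined with (E2) give $\|f-m_e\|_{L^1}\le\|f-m_e\|_{L^2}\le P\||\nabla f|\|_{L^2}\le P$ for any $f$ with $\|f\|_{Lip(\Omega)}\le 1$. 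For (E4)$\Rightarrow$(E1), concavity of $I$ together with $I(0)=0$ forces $s\mapsto I(s)/s$ to be non-increasing on $(0,1/2]$, so $I(s)/s\ge 2I(1/2)$ there and Cheeger reduces to $I(1/2)>0$; but if $I(1/2)=0$ then $I\equiv 0$ on $[0,1/2]$ (by monotonicity and $I(0)=0$), contradicting the finiteness of $\int_0^{1/2} s/I(s)\,ds$ delivered by (E4).

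It remains to weave (E3) into the loop. The implication (E3)$\Rightarrow$(E4) is obtained by direct integration, namely $\|f-m_e\|_{L^1}=\int_0^\infty \mu\{|f-m_e|>t\}\,dt\le c_1/c_2$. For (E2)$\Rightarrow$(E3) I would invoke the classical Gromov--Milman argument: apply (E2) to $g=e^{\lambda f/2}$ for a $1$-Lipschitz $f$ and iterate the resulting variance bound to secure $\int e^{\lambda f}\,d\mu<\infty$ for some $\lambda>0$, then conclude exponential concentration by Markov's inequality. This last step is the one ingredient not flowing directly from the isoperimetric Hardy type translation, and I expect it to be the main technical hurdle; every other implication reduces to (i) an application of Definition~\ref{def:isohar}, (ii) a Fubini calculation, and (iii) the concavity of~$I$ forcing $I(s)/s$ to be monotone.
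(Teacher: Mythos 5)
Your proof plan is essentially correct and closes the logical cycle, but it diverges from the paper's argument in a way that is worth flagging. The paper treats \emph{all four} statements uniformly inside the isoperimetric Hardy framework: each of (E1)--(E4) is matched with a pair of r.i.\ spaces and the corresponding boundedness of $Q_I$. The key observation you miss is that \emph{(E3) itself is a Poincar\'e inequality}: exponential concentration for $1$-Lipschitz functions is the same (via Markov and exponential integrability) as $\left\|  f-m_{e}\right\|  _{\exp(L)}\preceq\left\|  \left|  \nabla f\right|  \right\|  _{L^{\infty}}$, so (E3) corresponds to the pair $(X,Y)=(L^\infty,\exp L)$. Consequently the paper proves $(E1)\Rightarrow(E3)$ by the one-line operator estimate $\left\| \int_t^1 f(s)\,ds/s\right\|_{\exp L} \le \|f\|_{L^\infty}$ (so Cheeger makes $Q_I:L^\infty\to\exp L$ bounded) and $(E3)\Rightarrow(E1)$ by testing that bound on $f=\chi_{(0,1/2)}$; both steps are instances of Definition~\ref{def:isohar}, no semigroup or iteration is needed. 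Your plan instead routes the hard leg through $(E2)\Rightarrow(E3)$ via a Gromov--Milman exponential iteration of the $L^2$ spectral gap. That argument is classical and is expected to go through in the metric setting (one needs the chain-rule inequality $|\nabla(\phi\circ f)|\le|\phi'(f)||\nabla f|$ for the upper gradient (\ref{vista}), and care in summing the geometric iteration), but it is a genuinely external tool, and as you yourself note it is where all the technical work would be concentrated. By comparison, the paper's route keeps the entire proof inside the Hardy-operator calculus, which is precisely the point the theorem is designed to illustrate.

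The remaining implications in your plan are fine. $(E1)\Rightarrow(E2)$ matches the paper. $(E2)\Rightarrow(E4)$ via the trivial inclusions $L^\infty\subset L^2\subset L^1$ on a probability space is a valid shortcut; the paper instead passes through $(E2)\Rightarrow(E1)$ and $(E1)\Rightarrow(E3)\Rightarrow(E4)$, but your route is shorter. For $(E4)\Rightarrow(E1)$, your argument and the paper's land in the same place but travel differently: the paper tests $Q_I:L^\infty\to L^1$ on $f=\chi_{(0,r)}$ to get $I(r)\succeq r^2$, then extracts $I(1/2)>0$; you go through $\int_0^{1/2}s/I(s)\,ds<\infty$ and a contradiction argument to reach $I(1/2)>0$. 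Both then invoke the same convexity step, namely that $I(s)/s$ is non-increasing (from concavity and $I(0)=0$) to conclude $I(s)\ge 2I(1/2)\,s$ on $(0,1/2]$. Your contradiction step is correct but note that it relies on the monotone-increase assumption in Condition~\ref{pedida} as well as concavity; without monotonicity the conclusion ``$I(1/2)=0\Rightarrow I\equiv 0$'' could fail.
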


\begin{proof}
Suppose that Cheeger's inequality (E1) holds, $I(s)\succeq s,$ $s\in(0,1/2).$
Therefore, for all $f\geq0,$ with $supp(f)\subset(0,1/2),$ we have%
\begin{equation}
Q_{I}f(t)=\int_{t}^{1}f(s)\frac{ds}{I(s)}\preceq Qf(t)=\int_{t}^{1}%
f(s)\frac{ds}{s}. \label{valida}%
\end{equation}
In particular, since $Q:L^{2}(0,1)\rightarrow L^{2}(0,1),$ we see that%
\[
\left\|  Q_{I}f\right\|  _{L^{2}}\leq C\left\|  f\right\|  _{L^{2}},\text{ for
all }f\text{ }\geq0,\text{ such that }supp(f)\subset(0,1/2).
\]
Consequently, by the isoperimetric Hardy property, the $(L^{2},L^{2})$
Poincar\'{e} inequality (E2) holds. Conversely, if the $(L^{2},L^{2})$
Poincar\'{e} inequality holds, then
\[
\left\|  Q_{I}f\right\|  _{L^{2}}\leq C\left\|  f\right\|  _{L^{2}},\text{for
all }f\text{ such that }supp(f)\subset(0,1/2).
\]
Moreover, since $L^{2}\subset L(2,\infty),$ we have%
\[
\left\|  Q_{I}f\right\|  _{L(2,\infty)}\leq C\left\|  f\right\|  _{L^{2}%
}\text{ for all }f\geq0\text{ such that }supp(f)\subset(0,1/2).
\]
Let $f=\chi_{(0,r)},$ with $r\leq1/2.$ Then, the previous inequality readily
gives%
\[
\sup_{t}t^{1/2}\int_{t}^{r}\frac{ds}{I(s)}\leq Cr^{1/2},
\]
and, since $I(t)$ increases on $(0,1/2),$ we get%
\[
\frac{1}{I(r)}\sup_{t}t^{1/2}(r-t)\preceq Cr^{1/2}.
\]
Moreover, since on the other hand%
\[
\sup_{t<r}t^{1/2}(r-t)\geq\left(  \frac{r}{2}\right)  ^{1/2}\frac{r}{2}%
\]
we see that%
\[
I(t)\succeq t,\text{ \ }t\in(0,1/2].
\]

It is also elementary to see that the operator $Q$ defined above is a bounded
operator $Q:L^{\infty}\mapsto\exp L.$ Indeed, using an equivalent norm for
$\exp L$ (cf. \cite{jm}) we compute%
\[
\left\|  \int_{t}^{1}f(s)\frac{ds}{s}\right\|  _{\exp(L)}=\sup_{0<t<1}%
\frac{\int_{t}^{1}f(s)\frac{ds}{s}}{1+\log\frac{1}{t}}\leq\left\|  f\right\|
_{L^{\infty}}.
\]
Therefore, if (E1) holds then by (\ref{valida}),%
\[
Q_{I}:L^{\infty}\rightarrow\exp(L),
\]
and therefore, by the isoperimetric Hardy property, we see that for all $f\in
Lip(\Omega)$ we have%
\begin{equation}
\left\|  f-m_{e}\right\|  _{\exp(L)}\preceq\left\|  \left|  \nabla f\right|
\right\|  _{L^{\infty}}. \label{valida1}%
\end{equation}
In other words, the exponential concentration inequality (E3) holds.
Conversely, suppose that (\ref{valida1}) holds. Then, by the isoperimetric
Hardy property, we have,%
\begin{equation}
\sup_{t}\frac{\int_{t}^{1/2}f(s)\frac{ds}{I(s)}}{1+\log\frac{1}{t}}%
\preceq\left\|  f\right\|  _{L^{\infty}}. \label{valida3}%
\end{equation}
Insert the function $f(s)=\chi_{(0,1/2)}(s)\in L^{\infty}$ in (\ref{valida3});
then, using the fact that $s/I(s)$ increases, we see that for all
$t\in(0,1/2)$ we have
\begin{align*}
c  &  \succeq\sup_{t<1/2}\frac{\int_{t}^{1/2}\frac{s}{s}\frac{ds}{I(s)}%
}{1+\log\frac{1}{t}}\\
&  \succeq\frac{t}{I(t)}\frac{\int_{t}^{1/2}\frac{ds}{s}}{1+\log\frac{1}{t}}\\
&  \succeq\frac{t}{I(t)}\frac{\log\frac{1}{t}+\log\frac{1}{2}}{1+\log\frac
{1}{t}}\\
&  \succeq\frac{t}{I(t)}.
\end{align*}
Therefore Cheeger's inequality (E1) holds. Finally, (E3) combined with the
trivial embedding%
\[
\left\|  f-m_{e}\right\|  _{L^{1}}\leq c\left\|  f-m_{e}\right\|  _{\exp(L)}%
\]
implies
\[
\left\|  f-m_{e}\right\|  _{L^{1}}\preceq\left\|  \nabla f\right\|
_{L^{\infty}}.
\]
Therefore (E4) holds. Conversely, if (E4) holds then%
\[
\left\|  Q_{I}f\right\|  _{L^{1}}\leq C\left\|  f\right\|  _{L^{\infty}}\text{
for all }f\geq0\text{ such that }supp(f)\subset(0,1/2).
\]
A familiar calculation using $f=\chi_{(0,r)},$ with $r\leq1/2,$ gives%
\[
I(t)\succeq t^{2},\text{ }t\in(0,1/2].
\]
However (here we use an argument by E. Milman \cite{MiE}), we know that
$I(t)/t$ is decreasing and $I(t)$ is symmetric about $1/2$ so by a convexity
argument we can deduce that
\[
I(t)\succeq t,\text{ }t\in(0,1/2]
\]
concluding the proof.
\end{proof}

We shall now consider the equivalence between $(E5)$ and $(E6)$ in the setting
of metric probability spaces. We start the discussion observing that given a
r.i. space it is, in general, not possible to improve on (\ref{tango}) unless
we have more information about $X.$ On the other hand, when dealing with
Orlicz spaces, and we assume, moreover, some extra growth properties on the
Young's functions we can improve upon (\ref{tango}). More specifically,
suppose that $N$ is a Young's function such that$\frac{N(t)}{t^{q}}$ is
increasing, then%
\begin{equation}
\left\|  f\right\|  _{L_{N}}\preceq\left\|  f\right\|  _{\Lambda(\phi_{L_{N}%
},q)}=\left\{  \int_{0}^{1}\left[  f^{\ast}(s)\phi_{L_{N}}(s)\right]
^{q}\frac{ds}{s}\right\}  ^{1/q}, \label{merida}%
\end{equation}
while the opposite inequality holds if $\frac{N(t)}{t^{q}}$ decreases (cf.
\cite[pag 43]{merida}).

\begin{theorem}
Suppose that $(\Omega,d,\mu)$ is a metric probability space of isoperimetric
Hardy type. Let $1\leq q<\infty,$ and let $N$ be a Young's function such that
$\frac{N(t)^{1/q}}{t}$ is non-decreasing, and there exists $\alpha>\max
\{\frac{1}{q}-\frac{1}{2},0\}$ such that $\frac{N(t^{\alpha})}{t}$
non-increasing. Then $(E5)\Leftrightarrow(E6).$ In fact, $(E6)\Rightarrow(E5)$
is true without the assumption that $(\Omega,d,\mu)$ is of isoperimetric Hardy type.
\end{theorem}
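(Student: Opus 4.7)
The plan is to use the isoperimetric Hardy type assumption to recast (E5) as a boundedness statement for $Q_I:\overline{L^{q}}\to\overline{L_N}$ on nonnegative functions supported in $(0,1/2)$, and to treat the two implications separately. Only (E5)$\Rightarrow$(E6) will actually use the Hardy type hypothesis; the reverse direction is proved by a direct argument modelled on Theorem~\ref{opti00}.

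For (E5)$\Rightarrow$(E6): Since $(\Omega,d,\mu)$ is of isoperimetric Hardy type, (E5) is equivalent to $\|Q_I f\|_{\overline{L_N}}\preceq\|f\|_{\overline{L^{q}}}$ for positive $f$ with supp$f\subset(0,1/2)$. Test with $f=\chi_{(0,r)}$ for $0<r\leq 1/2$. Then $\|f\|_{\overline{L^{q}}}=r^{1/q}$, and since $I$ is increasing on $(0,1/2)$, for $0<t<r$ we have $Q_I f(t)=\int_{t}^{r}ds/I(s)\geq (r-t)/I(r)$, hence $Q_I f(t)\geq r/(2I(r))$ on $(0,r/2)$. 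Therefore
\[
\|Q_I f\|_{\overline{L_N}}\;\geq\;\frac{r}{2I(r)}\,\phi_{L_N}(r/2)\;\succeq\;\frac{r\,\phi_{L_N}(r)}{I(r)},
\]
using concavity of $\phi_{L_N}$ (with $\phi_{L_N}(0)=0$) to compare $\phi_{L_N}(r/2)$ with $\phi_{L_N}(r)$. Combining with $\|Q_I f\|_{\overline{L_N}}\leq C r^{1/q}$ yields $I(r)\succeq r^{1-1/q}\phi_{L_N}(r)=r^{1-1/q}/N^{-1}(1/r)$, which is (E6). Note that this direction uses neither growth hypothesis on $N$.

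For (E6)$\Rightarrow$(E5) (without Hardy type): By Lemma~\ref{media}, it suffices to estimate $\|g-\int_\Omega g\,d\mu\|_{L_N}$. Imitating the proof of Theorem~\ref{opti00}, write $h=g-\int_\Omega g\,d\mu$ and expand
\[
h_\mu^{\ast}(t)\chi_{(0,1/2)}(t)=\int_{t}^{1/2}(-h_\mu^{\ast})^{\prime}(s)\,ds+h_\mu^{\ast}(1/2)\chi_{(0,1/2)}(t),
\]
with $h_\mu^{\ast}(1/2)\preceq\|h\|_{L^1}$. This leads to
\[
\|h\|_{L_N}\;\preceq\;\bigl\|Q_I\bigl((-h_\mu^{\ast})^{\prime}(\cdot)I(\cdot)\bigr)\bigr\|_{\overline{L_N}}\;+\;\|h\|_{L^1}.
\]
The $L^{1}$ term is dominated by $\||\nabla g|\|_{L^{q}}$ via (\ref{pr1}) and $L^{q}\hookrightarrow L^{1}$. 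By Theorem~\ref{teomain}(4), $\|(-h_\mu^{\ast})^{\prime}(\cdot)I(\cdot)\|_{\overline{L^{q}}}\leq\||\nabla g|\|_{L^{q}}$, so the whole estimate reduces to the Hardy-type bound
\begin{equation}
\|Q_I \varphi\|_{\overline{L_N}}\;\preceq\;\|\varphi\|_{\overline{L^{q}}}\qquad(\ast)
\end{equation}
for positive $\varphi$ supported in $(0,1/2)$.

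To prove $(\ast)$, use the condition that $N(t)/t^{q}$ is non-decreasing together with (\ref{merida}) to embed the Orlicz space into the appropriate Lorentz space: $\|Q_I\varphi\|_{L_N}\preceq\|Q_I\varphi\|_{\Lambda(\phi_{L_N},q)}$. Since $Q_I\varphi$ is already decreasing, and using (E6) in the form $Q_I\varphi(s)\preceq\int_{s}^{1}\varphi(r)/[r^{1-1/q}\phi_{L_N}(r)]\,dr$, the problem reduces to the weighted Hardy inequality
\[
\left(\int_{0}^{1}\left[\int_{s}^{1}\frac{\varphi(r)\,dr}{r^{1-1/q}\phi_{L_N}(r)}\right]^{q}\frac{\phi_{L_N}(s)^{q}}{s}\,ds\right)^{\!1/q}\;\preceq\;\|\varphi\|_{L^{q}}.
\]
This is to be checked by the classical Muckenhoupt criterion. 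The second hypothesis on $N$, namely that $N(t^{\alpha})/t$ is non-increasing for some $\alpha>\max\{1/q-1/2,0\}$, controls the growth of $\phi_{L_N}$ and is exactly what is needed to make the Muckenhoupt constant finite. The main obstacle is this last verification: balancing the one-sided growth conditions $q\leq 1/\alpha$ (implicit in $N(t)/t^q$ non-decreasing and $N(u)/u^{1/\alpha}$ non-increasing) against the exponents appearing in (E6) is the delicate quantitative heart of the argument; the constraint $\alpha>\max\{1/q-1/2,0\}$ is precisely what makes the test integrals converge.
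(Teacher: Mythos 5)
Your overall structure matches the paper's: (E5)$\Rightarrow$(E6) by testing the Hardy bound $Q_I:\overline{L^q}\to\overline{L_N}$ against $\chi_{(0,r)}$, and (E6)$\Rightarrow$(E5) by embedding $\Lambda(\phi_{L_N},q)\hookrightarrow L_N$ via (\ref{merida}) and reducing to a weighted Hardy inequality with weight $\phi_{L_N}(t)^q/t$. Your (E5)$\Rightarrow$(E6) argument is complete and correct (the paper phrases it by weakening through $M(L_N)$ and $\Lambda(L^q)=L(q,1)$, you work with the Orlicz norm directly, but these are the same thing), and your observation that this direction does not use either growth hypothesis on $N$ is accurate.

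The gap is in (E6)$\Rightarrow$(E5): you reduce correctly to the weighted Hardy inequality but then simply invoke ``the classical Muckenhoupt criterion'' and assert that the hypotheses on $N$ make the constant finite. That is precisely the nontrivial half of the paper's proof. One must show
\[
\sup_{0<r<1}\left(\int_{0}^{r}\frac{\phi_{L_N}(t)^{q}}{t}\,dt\right)^{1/q}\left(\int_{r}^{1}\left(\frac{\phi_{L_N}(t)^{q}}{t}\right)^{-\frac{1}{q-1}}\frac{dt}{t^{\frac{q}{q-1}}}\right)^{\frac{q-1}{q}}<\infty,
\]
and this does not follow generically: the paper first translates the two growth hypotheses on $N$ into the monotonicity statements that $\phi_{L_N}(t)^{q}/t$ is decreasing and $\phi_{L_N}(t)/t^{\alpha}$ is increasing, then uses the second to prove $\frac{1}{r}\int_0^r\phi_{L_N}(t)^q\,dt/t\leq\frac{1}{\alpha}\phi_{L_N}(r)^q/r$, and finally bounds the second factor by a slightly delicate substitution $w(t)=\phi_{L_N}(t)^q/t$ together with that same inequality. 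None of this is automatic, and you have not supplied it. Also, your closing remark that $\alpha>\max\{\tfrac{1}{q}-\tfrac{1}{2},0\}$ is ``precisely what makes the test integrals converge'' overstates things: in this direction only $\alpha>0$ enters the paper's verification (the Hardy constant comes out as $1/\alpha$); the sharper lower bound on $\alpha$ plays no role in $(E6)\Rightarrow(E5)$. So the right conclusion is that you have the correct skeleton but the quantitative verification, which is where the assumptions on $N$ actually do their work, is missing.
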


\begin{proof}
If (E5) holds then, in view of (\ref{tango}), and the fact that $\Lambda
(L^{q})=L(q,1)$, we have
\[
\left\|  Q_{I}f\right\|  _{M(L_{N}(\Omega))}\preceq\left\|  f\right\|
_{L(q,1)}.
\]
Therefore, there exists a constant $C>0$ such that for $f=\chi_{(0,r)},$
$0<r<1/2,$ we have%
\[
\sup_{t<r}\left\{  \phi_{L_{N}}(t)\int_{t}^{r}\frac{ds}{I(s)}\right\}  \leq
Cr^{1/q}.
\]
Thus,%
\begin{align*}
\sup_{t<r}\phi_{L_{N}}(t)\frac{1}{I(r)}(r-t)  &  \geq\frac{1}{2}\phi_{L_{N}%
}(r/2)\frac{r}{I(r)}\\
&  \geq\frac{1}{4}\phi_{L_{N}}(r)\frac{r}{I(r)}\text{ (since }\phi_{L_{N}%
}(t)/t\text{ decreases).}%
\end{align*}
Summarizing, we have%
\[
I(r)\succeq r^{1-1/q}\phi_{L_{N}}(r),\;0<r<1/2.
\]
Consequently, recalling (\ref{quetiempos aquellos}) we obtain (E6).

Suppose now that (E6) holds. We will show below that%
\begin{equation}
\left\|  Q_{I}f\right\|  _{\Lambda(\phi_{L_{N}},q)}\preceq\left\|  f\right\|
_{L^{q}}. \label{ahora}%
\end{equation}
This given, and in view of (\ref{merida}), we see that
\[
\left\|  Q_{I}f\right\|  _{L_{N}}\preceq\left\|  f\right\|  _{L^{q}}.
\]
Therefore (E5) follows by the isoperimetric Hardy property. To prove
(\ref{ahora}) we use (E6) in order to estimate $Q_{I}$ by%
\[
Q_{I}f(t)\preceq\int_{t}^{1/2}\frac{f(s)s^{1/q-1}}{\phi_{L_{N}}(s)}s\frac
{ds}{s}\leq Q\left(  \frac{f(s)s^{1/q-1}}{\phi_{L_{N}}(s)}s\right)  (t).
\]
Thus, since $Q\left(  \frac{f(s)s^{1/q-1}}{\phi_{L_{N}}(s)}s\right)  (t)$ is
decreasing, using a suitable version of Hardy's inequality (cf. (\ref{cota})
below) we get%
\begin{align*}
\left\|  Q_{I}f\right\|  _{\Lambda(\phi_{L_{N}},q)}  &  \preceq\left\{
\int_{0}^{1}\left(  \int_{t}^{1}\frac{f(s)s^{1/q-1}}{\phi_{L_{N}}(s)}%
s\frac{ds}{s}\right)  ^{q}\left(  \phi_{L_{N}}(t)\right)  ^{q}\frac{dt}%
{t}\right\}  ^{1/q}\\
&  \preceq\left\{  \int_{0}^{1}\left(  \frac{f(t)t^{1/q}}{\phi_{L_{N}}%
(t)}t\frac{1}{t}\right)  ^{q}\left(  \phi_{L_{N}}(t)\right)  ^{q}\frac{dt}%
{t}\right\}  ^{1/q}\\
&  =\left\|  f\right\|  _{L^{q}},
\end{align*}
as we wished to show. To justify the application of Hardy's inequality we need
to verify (see \cite[Page 45]{maz'yabook}) that%
\begin{equation}
\sup_{0<r<1}\left(  \int_{0}^{r}\left(  \phi_{L_{N}}(t)\right)  ^{q}\frac
{dt}{t}\right)  ^{1/q}\left(  \int_{r}^{1}\left(  \frac{\left(  \phi_{L_{N}%
}(t)\right)  ^{q}}{t}\right)  ^{\frac{-1}{q-1}}\frac{dt}{t^{\frac{q}{q-1}}%
}\right)  ^{\frac{q-1}{q}}\leq c. \label{cota}%
\end{equation}
To this end observe that, under our current assumptions on the growth of $N,$
we have%
\[
\frac{N(t)^{1/q}}{t}\text{ increasing}\Rightarrow\frac{\lbrack\phi_{L_{N}%
}(t)]^{q}}{t}\text{decreasing,}%
\]%
\[
\frac{N(t^{\alpha})}{t}\text{ decreasing}\Rightarrow\frac{\left(  \phi_{L_{N}%
}(t)\right)  ^{^{1/\alpha}}}{t}\text{ increasing}\Rightarrow\frac{\phi_{L_{N}%
}(t)}{t^{\alpha}}\text{ increasing.}%
\]
Therefore,
\begin{align}
\frac{1}{r}\int_{0}^{r}\left(  \phi_{L_{N}}(t)\right)  ^{q}\frac{dt}{t}  &
=\frac{1}{r}\int_{0}^{r}\left(  \phi_{L_{N}}(t)\right)  ^{q-1}\frac
{\phi_{L_{N}}(t)}{t^{\alpha}}\frac{t^{\alpha}dt}{t}\nonumber\\
&  \leq\frac{\phi_{L_{N}}(r)}{r^{\alpha}}\left(  \phi_{L_{N}}(t)\right)
^{q-1}\frac{1}{r}\int_{0}^{r}\frac{t^{\alpha}dt}{t}\nonumber\\
&  =\frac{\phi_{L_{N}}(r)}{r^{\alpha}}\left(  \phi_{L_{N}}(t)\right)
^{q-1}\frac{1}{r}\frac{r^{\alpha}}{\alpha}\nonumber\\
&  =\frac{1}{\alpha}\frac{\left(  \phi_{L_{N}}(r)\right)  ^{q}}{r}. \label{ok}%
\end{align}
To estimate the second integral in (\ref{cota}) let $w(s)=\frac{\left(
\phi_{L_{N}}(t)\right)  ^{q}}{t},$ then
\begin{align*}
\int_{r}^{1}\left(  w(t)\right)  ^{\frac{-1}{q-1}}\frac{dt}{t^{\frac{q}{q-1}%
}}  &  =\int_{r}^{1}\frac{w(t)}{\left(  tw(t)\right)  ^{\frac{q}{q-1}}}dt\\
&  \leq\frac{1}{\alpha}\int_{r}^{1}\frac{w(t)}{\left(  \int_{0}^{t}%
w(s)ds\right)  ^{\frac{q}{q-1}}}dt\text{ \ \ (by (\ref{ok}))}\\
&  \leq\frac{1}{\alpha}\frac{1}{\left(  \int_{0}^{r}w(s)ds\right)  ^{\frac
{q}{q-1}}}\int_{r}^{1}w(t)dt\\
&  =\frac{1}{\alpha}\left(  \int_{0}^{r}w(s)ds\right)  ^{\frac{-1}{q-1}}.
\end{align*}
Thus,%
\[
\left(  \int_{0}^{r}\left(  \phi_{L_{N}}(t)\right)  ^{q}\frac{dt}{t}\right)
^{1/q}\left(  \int_{r}^{1}\left(  \frac{\left(  \phi_{L_{N}}(t)\right)  ^{q}%
}{t}\right)  ^{\frac{-1}{q-1}}\frac{dt}{t^{\frac{q}{q-1}}}\right)
^{\frac{q-1}{q}}\leq\frac{1}{\alpha},
\]
and (\ref{cota}) holds.
\end{proof}

\begin{remark}
In the particular case when $L_{N}(\Omega)=L^{p}$ ($p\geq q),$ then we have
$\Lambda(\phi_{L_{N}},q)=L(p,q),$ and therefore we obtain%
\[
\left\|  f-\int_{\Omega}fd\mu\right\|  _{L(p,\infty)}\preceq\left\|  \left|
\nabla f\right|  \right\|  _{L^{q}}\Rightarrow\left\|  f-\int_{\Omega}%
fd\mu\right\|  _{L(p,q)}\preceq\left\|  \left|  \nabla f\right|  \right\|
_{L^{q}}.
\]
For more on this type of self improvement for Poincar\'{e} inequalities see
\cite{mmpote}.
\end{remark}

\begin{remark}
\label{remarkao}The fact that Cheeger's inequality implies concentration also
follows readily from (\ref{rea}). To see this observe that if $I(t)\succeq t,$
and $f$ is $1-Lip(\Omega)$ then from (\ref{rea}) we get%
\[
f^{\ast\ast}(t)-f^{\ast}(t)\preceq c,
\]
in other words $f\in L(\infty,\infty),$ the weak class of Bennett, De Vore and
Sharpley \cite{bds}. Since it is known (cf. \cite{BS}) that $L(\infty
,\infty)\subset e^{L}$ (cf. also \cite{mmjfa} for more general results) we see
that Cheeger's inequality indeed implies
\[
f\in Lip(\Omega)\Rightarrow f\in e^{L},
\]
i.e. Cheeger's inequality $\Rightarrow$concentration.
\end{remark}

\section{Transference Principle\label{secc::transference}}

A very useful property of symmetrization methods is to reduce complicated
problems to simpler model problems where symmetry can be used to find a
solution. In this section we show how to use symmetrization to transfer
inequalities\footnote{This circle of ideas of course is well known in the
theory of semigroups, and one can use the symmetrization inequalities in this
context as well (cf \cite{Bor2}, \cite{ledouxbk}). We hope to return to this
point elsewhere.} from one metric space to another. As we shall see the
isoperimetric Hardy property plays an important role in this process.

\begin{theorem}
\label{teosubordinado}Let $(\Omega,d,\mu)$ be a metric probability space of
isoperimetric Hardy type. Suppose that $(\Omega_{1},d_{1},\mu_{1})$ is a
probability metric space such that there exists $c>0$ such that%
\begin{equation}
I_{(\Omega_{1},d_{1},\mu_{1})}(t)\geq cI_{(\Omega,d,\mu)}(t),\text{ \ }%
t\in(0,1/2]. \label{envista}%
\end{equation}
Let $X(\Omega),Y(\Omega)$ be r.i. spaces\ for which there exists a constant
$c>0$ such that the following Poincar\'{e} inequality holds
\begin{equation}
\left\|  g-\int_{{\Omega}}gd\mu\right\|  _{Y(\Omega)}\leq c\left\|  \left|
\nabla g\right|  \right\|  _{X(\Omega)},\text{ \ for all }g\in Lip(\Omega).
\label{envista1}%
\end{equation}
Then, there exists a constant $c_{1}>0$ such that%
\[
\left\|  g-\int_{{\Omega}_{1}}gd\mu_{1}\right\|  _{Y(\Omega_{1})}\leq
c\left\|  \left|  \nabla g\right|  \right\|  _{X(\Omega_{1})},\text{ \ for all
}g\in Lip(\Omega_{1}).
\]
\end{theorem}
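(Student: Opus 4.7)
The plan is to chain two applications of Theorem~\ref{opti00} via the isoperimetric Hardy property, using the hypothesis~\eqref{envista} to transfer the boundedness of the appropriate Hardy operator from one probability space to the other.

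First I would extract the Hardy information on $\Omega$. Let $I_{0}=I_{(\Omega,d,\mu)}$. Since $(\Omega,d,\mu)$ is of isoperimetric Hardy type, the validity of the Poincar\'e inequality~\eqref{envista1} is equivalent, by Definition~\ref{def:isohar}, to the existence of a constant $C$ such that
\[
\left\|Q_{I_{0}}f\right\|_{\bar{Y}}\leq C\left\|f\right\|_{\bar{X}}
\]
for every nonnegative $f\in\bar{X}$ with $\operatorname{supp}f\subset(0,1/2)$, where $Q_{I_{0}}$ is the isoperimetric Hardy operator~\eqref{olvidada} associated with $I_{0}$.

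Next I would promote $\tilde{I}:=cI_{0}$ to an isoperimetric estimator for $(\Omega_{1},d_{1},\mu_{1})$. Since $I_{0}$ is, by Condition~\ref{pedida}, concave, continuous, increasing on $(0,1/2)$, symmetric about $1/2$, and vanishes at $0$, the same is true of $\tilde I$. The bound $I_{(\Omega_{1},d_{1},\mu_{1})}(t)\geq\tilde{I}(t)$ holds on $(0,1/2]$ by hypothesis~\eqref{envista}, and extends to $[1/2,1]$ because both sides are symmetric about $1/2$ (again by Condition~\ref{pedida}). Thus $\tilde I$ is an isoperimetric estimator for $(\Omega_{1},d_{1},\mu_{1})$ in the sense of the definition following Condition~\ref{pedida}.

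Then I would transfer the Hardy boundedness. Trivially
\[
Q_{\tilde{I}}f(t)=\int_{t}^{1}\frac{f(s)}{cI_{0}(s)}\,ds=\frac{1}{c}\,Q_{I_{0}}f(t),
\]
so for any nonnegative $f\in\bar{X}$ with $\operatorname{supp}f\subset(0,1/2)$ we obtain
\[
\left\|Q_{\tilde{I}}f\right\|_{\bar{Y}}\leq\frac{C}{c}\left\|f\right\|_{\bar{X}}.
\]

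Finally I would invoke the forward (non-equivalence) direction of Theorem~\ref{opti00} on $(\Omega_{1},d_{1},\mu_{1})$ with the isoperimetric estimator $\tilde I$: the display above is exactly hypothesis~\eqref{har} of that theorem, so its conclusion~\eqref{revhad} yields
\[
\left\|g-\int_{\Omega_{1}}g\,d\mu_{1}\right\|_{Y(\Omega_{1})}\leq c_{1}\left\|\,|\nabla g|\,\right\|_{X(\Omega_{1})},\qquad g\in\operatorname{Lip}(\Omega_{1}),
\]
as required. The mild obstacle in this argument is the symmetry/concavity verification of step two (in particular that the one-sided hypothesis on $(0,1/2]$ suffices to produce a genuine isoperimetric estimator on all of $[0,1]$); note that $(\Omega_{1},d_{1},\mu_{1})$ need \emph{not} be of isoperimetric Hardy type, since only the forward implication of Theorem~\ref{opti00} is used on that side.
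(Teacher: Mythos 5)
Your proposal is correct and follows essentially the same route as the paper: convert the Poincar\'e inequality on $\Omega$ to boundedness of the isoperimetric Hardy operator via the isoperimetric Hardy property, transfer that boundedness through the pointwise domination of the kernels coming from \eqref{envista}, and then invoke the one-directional implication in Theorem~\ref{opti00} on $\Omega_1$. The only cosmetic difference is that the paper works directly with the operator $Q_{I_{(\Omega_1,d_1,\mu_1)}}$ built from the actual profile of $\Omega_1$, while you substitute the estimator $\tilde I=cI_{(\Omega,d,\mu)}$; both are legitimate inputs to Theorem~\ref{opti00}, and your explicit verification that $\tilde I$ qualifies as an isoperimetric estimator (and your remark that $\Omega_1$ need not itself be of isoperimetric Hardy type) are exactly the points the paper leaves implicit.
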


\begin{proof}
Since $(\Omega,d,\mu)$ is of isoperimetric Hardy type the Poincar\'{e}
inequality (\ref{envista1}) implies the existence of a constant $\tilde{c}>0$
such that%
\begin{equation}
\left\|  Q_{I_{(\Omega,d,\mu)}}f\right\|  _{\bar{Y}(0,1)}\leq\tilde{c}\left\|
f\right\|  _{\bar{X}(0,1)},\text{ for all }f\geq0,\text{ with supp}%
f\subset(0,1/2). \label{envista2}%
\end{equation}
In view of (\ref{envista}) we have
\[
\int_{t}^{1}f(s)\frac{ds}{I_{(\Omega_{1},d_{1},\mu_{1})}(s)}\preceq\int
_{t}^{1}f(s)\frac{ds}{I_{(\Omega,d,\mu)}(s)},\text{ for all }f\geq0,\text{
with supp}f\subset(0,1/2).
\]
Therefore, (\ref{envista2}) can be lifted to%
\[
\left\|  Q_{I_{(\Omega_{1},d_{1},\mu_{1})}}f\right\|  _{\bar{Y}(0,1)}%
\preceq\left\|  f\right\|  _{\bar{X}(0,1)},\text{ for all }f\geq0,\text{ with
supp}f\subset(0,1/2).
\]
Therefore we conclude by Theorem \ref{opti00}.
\end{proof}

\begin{corollary}
\label{corosuboesferico}Let $M$ be a (compact) connected Riemannian manifold
of dimension $n\geq2$, with Ricci curvature bounded from below by $\rho>0.$
Let $\sigma$ be the normalized volume on $M$. Let $\bar{X}(0,1),$ $\bar
{Y}(0,1)$ be two r.i. spaces$\ $\ for which the following Poincar\'{e}
inequality holds in the probability space $(\mathbb{S}^{n},d,\sigma_{n})$
\[
\left\|  g-\int_{\mathbb{S}^{n}}gd\sigma_{n}\right\|  _{Y(\mathbb{S}^{n}%
)}\preceq\left\|  \left|  \nabla g\right|  \right\|  _{X(\mathbb{S}^{n}%
)},\text{ \ \ }g\in Lip(\mathbb{S}^{n}).
\]
Then,
\[
\left\|  g-\int_{M}gd\sigma\right\|  _{Y(M)}\preceq\left\|  \left|  \nabla
g\right|  \right\|  _{X(M)},\text{ \ \ }g\in Lip(M).
\]
\end{corollary}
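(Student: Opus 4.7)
The plan is to invoke the transference principle (Theorem \ref{teosubordinado}) with $(\Omega,d,\mu)=(\mathbb{S}^n,d,\sigma_n)$ and $(\Omega_1,d_1,\mu_1)=(M,\mathrm{dist}_M,\sigma)$. Two ingredients are required: (a) the source space must be of isoperimetric Hardy type, and (b) we need a one-sided comparison of isoperimetric profiles of the form $I_{(M,\mathrm{dist}_M,\sigma)}(t)\geq c\,I_{(\mathbb{S}^n,d,\sigma_n)}(t)$ for $t\in(0,1/2]$.

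Ingredient (a) is immediate: Example \ref{examarkao} records that all the model spaces considered in Section \ref{secc::po}, including the round sphere $(\mathbb{S}^n,d,\sigma_n)$, are of isoperimetric Hardy type (this follows from Theorem \ref{optimal} applied to Case b)). Ingredient (b) is the content of the classical Lévy--Gromov isoperimetric comparison theorem: under the hypothesis $\mathrm{Ric}_M\geq\rho>0$ with $\rho$ normalized so that the model is the unit sphere $\mathbb{S}^n$, for every Borel set $A\subset M$
\[
\sigma^+(A)\geq I_{(\mathbb{S}^n,d,\sigma_n)}(\sigma(A)),
\]
so taking the pointwise maximal such lower bound gives $I_{(M,\mathrm{dist}_M,\sigma)}(t)\geq I_{(\mathbb{S}^n,d,\sigma_n)}(t)$ on $(0,1/2]$. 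If the lower bound on $\mathrm{Ric}_M$ is $\rho>0$ with a normalization constant different from that of the unit sphere, one obtains the same inequality up to a constant $c=c(\rho,n)>0$, which is all that is needed.

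Having verified (a) and (b), Theorem \ref{teosubordinado} applies verbatim: given the hypothesized Poincaré inequality
\[
\Big\|g-\int_{\mathbb{S}^n}g\,d\sigma_n\Big\|_{Y(\mathbb{S}^n)}\preceq \big\|\,|\nabla g|\,\big\|_{X(\mathbb{S}^n)},\qquad g\in Lip(\mathbb{S}^n),
\]
the isoperimetric Hardy property of $(\mathbb{S}^n,d,\sigma_n)$ converts it to boundedness of the operator $Q_{I_{\mathbb{S}^n}}:\bar X(0,1)\to\bar Y(0,1)$ on functions supported in $(0,1/2)$; the profile comparison gives $Q_{I_M}f\preceq Q_{I_{\mathbb{S}^n}}f$ pointwise for nonnegative $f$ with $\mathrm{supp}\,f\subset(0,1/2)$, so the boundedness transfers to $Q_{I_M}:\bar X(0,1)\to\bar Y(0,1)$; and finally Theorem \ref{opti00} (applied on $M$) converts this operator bound back into the desired Poincaré inequality on $M$.

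The only real content, beyond plugging into the transference machinery, is the Lévy--Gromov comparison, which we quote. Thus the proof is essentially a one-line application of Theorem \ref{teosubordinado}, with Example \ref{examarkao} supplying isoperimetric Hardy type and Lévy--Gromov supplying (\ref{envista}); no obstacle beyond citing these classical facts is expected.
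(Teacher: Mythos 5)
Your proof is correct and follows essentially the same route as the paper: invoke the L\'{e}vy--Gromov isoperimetric comparison $I_M \geq \sqrt{\rho/(n-1)}\,I_{\mathbb{S}^n}$ to supply hypothesis (\ref{envista}), use Example \ref{examarkao} (via Theorem \ref{optimal}, Case b) to confirm that $(\mathbb{S}^n,d,\sigma_n)$ is of isoperimetric Hardy type, and then apply the transference principle of Theorem \ref{teosubordinado}. Your unpacking of how the transference principle chains the operator bound from $Q_{I_{\mathbb{S}^n}}$ to $Q_{I_M}$ to the Poincar\'{e} inequality on $M$ is just the content of Theorem \ref{teosubordinado}'s proof, so there is no genuine difference in approach.
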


\begin{proof}
The L\'{e}vy-Gromov isoperimetric inequality (see \cite{Levy}, \cite{Grom},
\cite{GHL}) yields (recall $I_{n}=I_{\mathbb{S}^{n}},$ see Section
\ref{secc:sphere} above)%
\[
I_{M}\geq\sqrt{\frac{\rho}{n-1}}I_{n}.
\]

Therefore,
\[
\left\|  \int_{t}^{1}f(s)\frac{ds}{I_{M}(s)}\right\|  _{\bar{Y}}%
\preceq\left\|  f\right\|  _{\bar{X}},\text{ \ \ }\forall0\leq f\in\bar
{X},\text{ with }supp(f)\subset(0,1/2),
\]
and the result follows from Theorem \ref{teosubordinado} since $(\mathbb{S}%
^{n},d,\sigma_{n})$ is of isoperimetric Hardy type (cf. Example
\ref{examarkao}).
\end{proof}

\begin{remark}
A version of Corollary \ref{corosuboesferico} in the context of $L^{p}$ spaces
was given in \cite{islas}.
\end{remark}

Finally, let us now present our last example.

Let $1<p\leq2$, $\mu_{p}(x)=Z_{p}^{-1}\exp\left(  -\left|  x\right|
^{p}\right)  dx$, $x\in\mathbb{R}$, and let $\mu=\mu_{p}^{\otimes n}.$ Every
log-concave probability measure $\nu$ on $\mathbb{R}^{d}$ such that
$\exp(\varepsilon\left|  x\right|  ^{p})\in L^{1}(\nu)$ for some
$\varepsilon>0$ and $p\in\lbrack1,2]$ satisfies up to a constant the same
isoperimetric inequality as $\mu_{p}$ (see \cite{Bob1}, and \cite{bart1}).
This result was extended in \cite{BaKo} to the setting of Riemannian manifolds
under appropriate curvature conditions. Using these results we get

\begin{corollary}
\label{coco}Let $M$ be a smooth, complete, connected Riemannian manifold
without boundary. Let $d\nu(x)=e^{-V(x)}d\sigma(x)$ be a probability measure
on $M$, ($\sigma\ $ normalized volume on $M)$ with a twice continuously
differentiable potential $V$. Let $1<p\leq2,$ and suppose that there exists
$x_{0}\in M$ and $\varepsilon>0$ such that%
\[
\exp(\varepsilon d(x_{0},x)^{p})\in L^{1}(\mu),
\]
and, moreover, suppose that
\[
HessV+Ric\geq0.
\]
Let $\bar{X},$ $\bar{Y}$ be two r.i. spaces on $\mathbb{(}0,1)\ $\ for which
the following Poincar\'{e} inequality holds
\[
\left\|  \left(  g-\int_{\mathbb{R}^{n}}gd\mu\right)  _{\mu}^{\ast
}(t)\right\|  _{\bar{Y}}\preceq\left\|  \left|  \nabla g\right|  _{\mu}^{\ast
}\right\|  _{\bar{X}},\text{ \ \ }g\in Lip(\mathbb{R}^{n}).
\]
Then,
\[
\left\|  \left(  f-\int_{M}fd\nu\right)  _{\nu}^{\ast}\right\|  _{\bar{Y}%
}\preceq\left\|  \left|  \nabla g\right|  _{\nu}^{\ast}\right\|  _{\bar{X}%
},\text{ \ \ }g\in Lip(M).
\]
\end{corollary}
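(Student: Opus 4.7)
The plan is to recognize this as a direct application of the transference principle (Theorem~\ref{teosubordinado}), once the two hypotheses of that theorem are verified for the pair $(\Omega,d,\mu)=(\mathbb{R}^{n},\mu_{p}^{\otimes n})$ and $(\Omega_{1},d_{1},\mu_{1})=(M,d,\nu)$. The given Poincar\'{e} inequality on $(\mathbb{R}^{n},\mu)$ already plays the role of (\ref{envista1}), so what remains is (i) to show that $(\mathbb{R}^{n},\mu_{p}^{\otimes n})$ is of isoperimetric Hardy type, and (ii) to establish an isoperimetric comparison $I_{(M,d,\nu)}(t)\succeq I_{\mu_{p}^{\otimes n}}(t)$ on $(0,1/2]$.

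For (i), I would observe that $\mu_{p}$ is the log concave measure of Section~\ref{secc:logconcave} associated with $\Phi(x)=|x|^{p}$. Since $1<p\leq 2$, the function $\Phi$ is convex and $\sqrt{\Phi}(x)=|x|^{p/2}$ is concave, so the setup of Section~\ref{secc:logconcave} applies. Consequently $(\mathbb{R}^{n},\mu_{p}^{\otimes n})$ is a model space in the sense of Theorem~\ref{optimal}, and by Example~\ref{examarkao} it is of isoperimetric Hardy type, with the dimension free lower bound $I_{\mu_{p}^{\otimes n}}\geq c(\Phi)\,I_{\mu_{p}}$ from (\ref{tenso}).

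For (ii), I would invoke the results cited in the paragraph immediately preceding the Corollary: by \cite{Bob1} and \cite{bart1}, every log concave probability measure $\nu$ on $\mathbb{R}^{d}$ with $\exp(\varepsilon|x|^{p})\in L^{1}(\nu)$ satisfies, up to a universal constant, the same isoperimetric inequality as $\mu_{p}$; the extension to the Riemannian setting under the curvature-dimension condition $\mathrm{Hess}\,V+\mathrm{Ric}\geq 0$ together with the integrability $\exp(\varepsilon d(x_{0},\cdot)^{p})\in L^{1}(\nu)$ is precisely the content of \cite{BaKo}. This yields a constant $c>0$ with
\[
I_{(M,d,\nu)}(t)\geq c\,I_{\mu_{p}}(t),\qquad t\in(0,1/2].
\]
Combining with the dimension free bound from step (i) we obtain
\[
I_{(M,d,\nu)}(t)\succeq I_{\mu_{p}^{\otimes n}}(t),\qquad t\in(0,1/2],
\]
which is condition (\ref{envista}) of Theorem~\ref{teosubordinado}.

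With both hypotheses verified, Theorem~\ref{teosubordinado} applies and produces the desired Poincar\'{e} inequality on $(M,d,\nu)$, namely
\[
\left\|\Bigl(f-\int_{M}f\,d\nu\Bigr)_{\nu}^{\ast}\right\|_{\bar{Y}}\preceq\left\|\,|\nabla g|_{\nu}^{\ast}\right\|_{\bar{X}},\qquad g\in Lip(M).
\]
The only substantive step, and really the heart of the matter, is the isoperimetric comparison (ii); everything else is bookkeeping. Since this comparison is taken directly from the quoted literature, the proof is essentially a short citation argument plus an invocation of the transference machinery developed in Section~\ref{secc::transference}.
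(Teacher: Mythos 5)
Your proposal takes essentially the same route as the paper: establish the isoperimetric comparison $I_{(M,d,\nu)}\succeq I_{\mu_{p}}$ via \cite[Theorem~7.2]{BaKo} and then invoke the transference principle, Theorem~\ref{teosubordinado}. You spell out a couple of background facts the paper treats as implicit (that $(\mathbb{R}^{n},\mu_{p}^{\otimes n})$ is of isoperimetric Hardy type via Example~\ref{examarkao}, and that $I_{\mu_{p}^{\otimes n}}\simeq I_{\mu_{p}}$ by (\ref{tenso})), but the argument is the same.
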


\begin{proof}
By the conditions imposed on the manifold (see \cite[Theorem 7.2]{BaKo}) there
exists $\kappa>0$ such that
\[
I_{M}(t)\geq\kappa s\left(  \log\frac{1}{s}\right)  ^{1-\frac{1}{p}}\simeq
I_{\mu_{p}}(s),\text{ \ \ \ \ \ }0<s\leq1/2,
\]
and we conclude using Theorem \ref{teosubordinado}.
\end{proof}

\begin{remark}
A transference principle of Sobolev inequalities for absolutely continuous
probabilities on $\mathbb{R}^{n}$ whose isoperimetric function can be
estimated from below by the isoperimetric function of an even log-concave
probability measure on $\mathbb{R}$ was obtained in \cite[Lemma 2]{bart1}.
\end{remark}

\begin{remark}
Let $M=M_{1}\times M_{2}$ be the product of Riemannian manifolds with volume
$1.$ Then, the isoperimetric profile of $I_{M}$, can be estimated in terms of
the isoperimetric profiles of $I_{M_{i}}$ as follows (see\footnote{For more
information about a comparison theorem for products see \cite[Section 3]{bart}
and \cite[Section 3.3]{Ros}.} \cite{Mor})
\[
I_{M}(s)\geq\frac{1}{\sqrt{2}}\inf\left\{  s_{1}I_{M_{1}}(s_{2})+s_{2}%
I_{M_{2}}(s_{1}):s_{1}s_{2}=s\text{ \ or \ }1-s\right\}  .
\]
For example, if $I_{M_{i}}(s)\geq c_{i}s^{1-1/p_{i}}$, $(p_{i}>1)$, then
\[
I_{M}(s)\geq cs^{1-1/(p_{1}+p_{2})}.
\]
Using this estimate, Theorems \ref{teosubordinado} and \ref{opti00}, we can
easily derive Poincar\'{e} inequalities on $M$.
\end{remark}

\subsection{Gaussian Isoperimetric type and a question of
Triebel\label{secc:trie}}

When we were revising an earlier version of our manuscript we received a query
from Professor Hans Triebel concerning certain Sobolev inequalities with
dimension free constants (cf. \cite{trie}). In this section we provide a
positive answer to Prof. Triebel's question using the transference principle.

We consider Triebel's notation. Let $Q^{n}=(0,1)^{n},$ the unit cube in
$\mathbb{R}^{n}.$ Triebel asks for a treatment of dimension free Sobolev
inequalities for the space $W_{0}^{1,1}(Q^{n})=\overline{C_{0}^{\infty}%
(Q^{n})}^{W^{1,1}(Q^{n})}$. More specifically, Triebel asks (in our notation)
if one can prove dimension free inequalities of the form%
\begin{equation}
\left(  \int_{0}^{1}[f^{\ast}(t)]^{q}(1+\log\frac{1}{t})^{\alpha}dt\right)
^{1/q}\preceq\left\|  \left|  \nabla f\right|  \right\|  _{L^{q}(Q^{n}%
)}+\left\|  f\right\|  _{L^{q}(Q^{n})}, \label{nuvellvague}%
\end{equation}
for a suitable power $\alpha=?$ of the logarithm. To resolve this question, we
first need to understand the ``correct'' power of the logarithm that is needed
here. For this we consider the isoperimetry of $Q^{n}.$ It is known that (cf.
\cite{Bamau}, \cite[Theorem 7]{Ros})
\[
I_{Q^{n}}\geq I_{\gamma}.
\]
Therefore, since $(\mathbb{R}^{n},\gamma_{n})$ is of Hardy isoperimetric type
(cf. \cite{mmjfa}), we can use Theorem \ref{teosubordinado} to transfer to
$Q^{n}$ the Gaussian Poincar\'{e} inequalities. By the asymptotic behavior of
$I_{\gamma_{n}}$ it follows that, for $1<q<\infty,$ we have%
\[
\left(  \int_{0}^{1}\left[  \left(  f-\int_{Q^{n}}f\right)  ^{\ast\ast
}(t)\right]  ^{q}\left(  1+\log\frac{1}{t}\right)  ^{q/2}dt\right)
^{1/q}\preceq\left\|  \left|  \nabla f\right|  \right\|  _{L^{q}(Q^{n})},
\]
with constants independent of the dimension. Finally, an application of the
triangle inequality yields%
\[
\left(  \int_{0}^{1}f^{\ast\ast}(t)^{q}\left(  1+\log\frac{1}{t}\right)
^{q/2}dt\right)  ^{1/q}\preceq\left\|  \left|  \nabla f\right|  \right\|
_{L^{q}(Q^{n})}+\left\|  f\right\|  _{L^{q}(Q^{n})},
\]
and the constants are independent of the dimension. This statement proves
(\ref{nuvellvague}) with $\alpha=q/2,$ thus providing a positive answer to
Professor Triebel's conjecture.

Let us consider a similar result for the $p-$unit ball, i.e. let%
\[
B_{p}^{n}=\left\{  x=(x_{1},\cdots,x_{n}):\left\|  x\right\|  _{p}^{p}=\left|
x_{1}\right|  ^{p}+\cdots+\left|  x_{n}\right|  ^{p}\leq1\right\}  ,\text{
\ \ \ }1\leq p\leq2,
\]
and consider on $B_{p}^{n}$ the normalized volume measure%

\[
V_{p}^{n}=\frac{vol\mid_{B_{p}^{n}}}{vol(B_{p}^{n})}.
\]
In the recent paper \cite{sod}, S. Sodin proves that,
\[
I_{V_{p}^{n}}(\tilde{a})\geq cn^{1/p}\tilde{a}\log^{1-1/p}\frac{1}{\tilde{a}%
};\text{ \ \ \ }\tilde{a}=\min(a,1-a);\text{ }0<a<1,
\]
where $c$ is an absolute constant; in particular, since $n\geq2,$ we get
\[
I_{V_{p}^{n}}(\tilde{a})\geq c2^{1/p}\tilde{a}\log^{1-1/p}\frac{1}{\tilde{a}%
}.
\]
At this point we can use again Theorem \ref{teosubordinado} to transfer to
$V_{p}^{n}$ the Poincar\'{e} inequalities. Indeed, let $1\leq p\leq2$ and
consider the measure
\[
\mu_{p}=Z_{p}^{-1}\exp\left(  -\left|  x\right|  ^{p}\right)  dx,\text{
\ \ \ }x\in\mathbb{R}.
\]
Since $(\mathbb{R}^{n},\mu_{p}^{\otimes n})$ is of Hardy isoperimetric type
(see Example \ref{examarkao} above) and by the asymptotic properties of
$I_{\mu_{p}^{\otimes n}}$ (see (\ref{asim}))$,$ there exist constants $c_{1}$
and $c_{2},$ that do not depend on $n,$ such that
\[
c_{1}\tilde{a}\log^{1-1/p}\frac{1}{\tilde{a}}\leq I_{\mu_{p}^{\otimes n}%
}(\tilde{a})\leq c_{2}\tilde{a}\log^{1-1/p}\frac{1}{\tilde{a}}.
\]
By Theorem \ref{teosubordinado} it follows that, for $1<q<\infty,$ we have%
\[
\left(  \int_{0}^{1}\left[  \left(  f-\int_{B_{p}^{n}}fdV_{p}^{n}\right)
^{\ast\ast}(t)\right]  ^{q}\left(  1+\log\frac{1}{t}\right)  ^{q\left(
1-1/p\right)  }dt\right)  ^{1/q}\preceq\left\|  \left|  \nabla f\right|
\right\|  _{L^{q}(B_{p}^{n},dV_{p}^{n})}.
\]
Consequently,%
\[
\left(  \int_{0}^{1}f^{\ast\ast}(t)^{q}\left(  1+\log\frac{1}{t}\right)
^{q\left(  1-1/p\right)  }dt\right)  ^{1/q}\preceq\left\|  \left|  \nabla
f\right|  \right\|  _{L^{q}(B_{p}^{n},dV_{p}^{n})}+\left\|  f\right\|
_{L^{q}(B_{p}^{n},dV_{p}^{n})},
\]
with constants that are independent of the dimension.

\begin{remark}
In the particular case $p=2$, $q=2$ and $f\in W_{0}^{1,2}(B_{2}^{n}%
)=\overline{C_{0}^{\infty}(Q^{n})}^{W_{0}^{1,2}(B_{2}^{n})}$ this result was
obtained in \cite{KS1}. For $p=2$ and $1<q<n/3$ and other related results see
\cite{KS2}.
\end{remark}

One could also approach other questions posed by Triebel using our techniques
but this would take us too far away from the main topics of this paper.

On the other hand the ideas discussed in this section can be pushed further.
Let $(M,d)$ be a Riemannian manifold endowed with a probability measure $\mu$
on $M$ which is absolutely continuous with respect the volume $dvol_{M}.$ We
say that $M$ admits a \textbf{Gaussian isoperimetric inequality}, if there is
a positive constant $c(\mu)$ such that
\[
I_{\mu}(t)\geq c(\mu)I_{\gamma}(t)
\]
(where $I_{\gamma}$ denotes the Gaussian isoperimetric profile). It is known
that this family includes any compact manifold (with or without boundary)
endowed with its Riemannian probability (see \cite{Ros} an the references
quoted therein).

\begin{corollary}
\label{corosubogauss}Let $\gamma_{n}$ be the Gaussian measure on
$\mathbb{R}^{n}.$ Let $(M,d)$ be a Riemannian manifold which admits a Gaussian
isoperimetric inequality. Suppose that $\bar{X},$ $\bar{Y}$ are r.i. spaces on
$(0,1),$ \ for which the Gaussian Poincar\'{e} inequality holds:
\[
\left\|  g-\int_{\mathbb{R}^{n}}gd\gamma_{n}\right\|  _{Y(\mathbb{R}%
^{n},\gamma_{n})}\preceq\left\|  \left|  \nabla g\right|  \right\|
_{X(\mathbb{R}^{n},\gamma_{n})},\text{ \ \ }g\in Lip(\mathbb{R}^{n}).
\]
Then,
\[
\left\|  g-\int_{{M}}gd\mu\right\|  _{Y(M,d)}\preceq\left\|  \left|  \nabla
g\right|  \right\|  _{X(M,d)},\text{ \ \ }g\in Lip(M).
\]
In particular, if $1<p<\infty,$ there exists a constant $c_{p}$ such that
\[
\int_{0}^{1}f^{\ast}(t)^{p}\left(  1+\log\frac{1}{t}\right)  ^{p/2}d\mu\leq
c_{p}\left(  \int_{M}\left|  \nabla f(x)\right|  ^{p}d\mu+\int_{M}\left|
f(x)\right|  ^{p}d\mu\right)  ,\text{ \ \ }f\in Lip(M).
\]
\end{corollary}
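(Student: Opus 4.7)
The plan is a direct application of the transference principle, Theorem \ref{teosubordinado}, with source space $(\Omega,d,\mu) = (\mathbb{R}^n,|\cdot|,\gamma_n)$ and target space $(\Omega_1,d_1,\mu_1) = (M,d,\mu)$. Two hypotheses must be verified. First, that $(\mathbb{R}^n,\gamma_n)$ is of isoperimetric Hardy type: this is recorded in Example \ref{examarkao} (the Gaussian measure corresponds to the log-concave product measure associated with $\Phi(x)=x^2/2$, and Theorem \ref{optimal} shows that such product measures are of isoperimetric Hardy type). Second, the domination condition (\ref{envista}) reads $I_\mu(t)\geq c\,I_{\gamma}(t)$, which is precisely the definition of $(M,d,\mu)$ admitting a Gaussian isoperimetric inequality. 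Hence Theorem \ref{teosubordinado} applies verbatim and yields the first asserted Poincar\'{e} inequality on $(M,d,\mu)$.

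For the particular $L^p$ statement I would first invoke the sharp symmetrized Gaussian Poincar\'{e} inequality on $(\mathbb{R}^n,\gamma_n)$: for $1<p<\infty$,
\[
\left(\int_0^1 \left[\left(g-\int_{\mathbb{R}^n} g\,d\gamma_n\right)_{\gamma_n}^{\ast}(t)\right]^p \left(1+\log\tfrac{1}{t}\right)^{p/2}\,dt\right)^{1/p} \preceq \left\||\nabla g|\right\|_{L^p(\mathbb{R}^n,\gamma_n)}.
\]
This is a consequence of Theorem \ref{optimal} applied with $\bar{X}=L^p(0,1)$ and $\bar{Y}$ equal to the weighted $L^p$-space appearing on the left: since $I_{\gamma}(t)/t\simeq (1+\log(1/t))^{1/2}$ for $t$ near zero, the required boundedness of the isoperimetric Hardy operator $Q_{I_{\gamma}}\colon \bar{X}\to\bar{Y}$ reduces to a classical weighted Hardy inequality (compare (\ref{tarde1}) and Remark \ref{nesta}). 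By the first part of the Corollary, this inequality transfers to $(M,\mu)$ with the Gaussian rearrangement replaced by the $\mu$-rearrangement.

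To finish, I would remove the mean on the left by the triangle inequality in the r.i.\ space $\bar{Y}$: the constant $\int_M g\,d\mu$ contributes at most $\left|\int_M g\,d\mu\right|\cdot\|1\|_{\bar{Y}}$, and $\|1\|_{\bar{Y}}$ is finite because the weight $(1+\log(1/t))^{p/2}$ is integrable on $(0,1)$. Since $(M,\mu)$ is a probability space, $\left|\int_M g\,d\mu\right|\leq \|g\|_{L^1(M,\mu)}\leq \|g\|_{L^p(M,\mu)}$, which produces exactly the extra $\|f\|_{L^p(M,\mu)}$ summand on the right hand side.

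The only point requiring care is the derivation of the sharp Gaussian symmetrization Poincar\'{e} inequality with the exact log-weighted target, but this is already present in the discussion around (\ref{tarde1}) and in \cite{mmjfa}, and in any case is immediate from Theorem \ref{optimal} combined with the asymptotic $I_{\gamma}(t)\simeq t(\log(1/t))^{1/2}$. With this ingredient the proof is a routine assembly of machinery already developed earlier in the paper.
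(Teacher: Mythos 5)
Your proof is correct and follows exactly the route the paper intends: apply the transference principle (Theorem \ref{teosubordinado}) with source $(\mathbb{R}^n,\gamma_n)$, which is of isoperimetric Hardy type by Example \ref{examarkao}, and then specialize to $\bar X=L^p$, $\bar Y=L^p(\log L)^{p/2}$ using the asymptotic $I_\gamma(t)\simeq t(\log(1/t))^{1/2}$, removing the mean via the triangle inequality together with $\|1\|_{\bar Y}<\infty$. The paper leaves the proof of this corollary implicit, but your argument is the same one carried out explicitly for the preceding Corollary \ref{corosuboesferico}.
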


\section{Estimating isoperimetric profiles via semigroups\label{secc:semi}}

In this section we discuss an extension of the approach in \cite{MiE},
\cite{mie2} to the self improving results in Section \ref{secc:isohar}. In the
case of connected Riemannian manifolds, whose Ricci curvature is bounded from
below, E. Milman using methods of Ledoux (\cite{led1}, \cite{led2},
\cite{led3}) has developed a semigroup approach which produces isoperimetric
estimates starting from the Poincar\'{e} inequalities
\[
\left\|  g-\int_{\Omega}gd\mu\right\|  _{X}\preceq\left\|  \left|  \nabla
g\right|  \right\|  _{L^{q}},\ \ g\in Lip(\Omega),
\]
where $X$ is an $L^{p}$ space or an Orlicz space. In this section we show that
the analysis can be streamlined and extended to r.i. spaces.

Let $\Omega=(M,g)$ be a smooth complete connected Riemannian manifold equipped
with a probability measure $\mu,$ with density $d\mu=exp(-\psi)dVol_{M},$
$\psi\in C^{2}(M,\mathbb{R}).$ Let
\[
\Delta_{(\Omega,\mu)}=\Delta_{\Omega}-\nabla\psi\cdot\nabla,
\]
be the associated Laplacian ($\Delta_{\Omega}$ is the usual Laplace-Beltrami
operator on $\Omega$). Let $(P_{t})_{t\geq0}$ denote the semi-group associated
to the diffusion process with infinitesimal generator $\Delta_{(\Omega,\mu)}$
(see \cite{Dav}, \cite{led2}) characterized by the second order system
\[
\frac{\partial}{\partial t}P_{t}(f)=\Delta_{(\Omega,\mu)}(P_{t}(f)),\text{
\ \ }P_{0}(f)=f,
\]
where $f\in\mathcal{B}(\Omega)$ (the space of bounded smooth\footnote{we could
use $C^{\infty}$ functions here.} real functions on $\Omega).$

For each $t\geq0$, $p\geq1,$ $P_{t}:L^{p}(\Omega)\rightarrow L^{p}(\Omega)$ is
a bounded linear operator$.$ We list a few elementary properties of these operators

\begin{itemize}
\item $P_{t}1=1.$

\item $f\geq0\Rightarrow P_{t}f\geq0.$

\item $\int\left(  P_{t}f\right)  gd\mu=\int f\left(  P_{t}g\right)  d\mu.$

\item $\left(  P_{t}f\right)  ^{\alpha}\leq P_{t}f^{\alpha},$ $\forall
\alpha\geq1.$

\item $P_{t}\circ P_{s}=P_{s+t}.$

\item $P_{t}:X(\Omega)\rightarrow X(\Omega)$ is bounded on any r.i. space
$X(\Omega).$
\end{itemize}

Moreover, if the Bakry-\'{E}mery curvature-dimension condition holds (cf.
\cite{bakrled}):
\begin{equation}
Ric_{g}+Hess_{g}\psi\geq0, \label{be}%
\end{equation}
then, for all $t\geq0$ and $f\in\mathcal{B}(\Omega),$ we have the pointwise
inequality
\begin{equation}
2t\left|  \nabla P_{t}f\right|  ^{2}\leq P_{t}f^{2}-\left(  P_{t}f\right)
^{2}. \label{be1}%
\end{equation}

\begin{theorem}
Let$\ \Omega=(M,g)$ be a smooth complete connected Riemannian manifold which
satisfies the convexity assumption (\ref{be}). Let $X,Y$ be two r.i. spaces on
$\Omega$ such that conditions (a) and (b) hold:

Condition (a): One of the following conditions holds. Either (i) $X$ is $q$
concave for some $q\geq2$;

or

(ii) $\bar{\alpha}_{X}<1/2.$

Condition (b): There exists $c=c(X,Y)$ such that the $(Y,X)$ Poincar\'{e}
inequality holds for all $g\in Lip(\Omega)$%
\begin{equation}
\left\|  g-\int_{\Omega}gd\mu\right\|  _{Y}\leq c\left\|  \left|  \nabla
g\right|  \right\|  _{X}. \label{poipoi}%
\end{equation}
Then, there exists a constant $c_{1}>0$ such that%
\[
I_{(M,g,\mu)}(t)\geq c_{1}t(1-t)\frac{\phi_{Y}(t(1-t))}{\phi_{X}(t(1-t))},
\]
where $\phi_{X}$ and $\phi_{Y}$ are the fundamental functions of the r.i.
spaces $X$ and $Y.$

\begin{proof}
We shall follow closely Milman's proof of Theorem 2.9 in \cite{MiE}. Let $A$
denote an arbitrary Borel set in $\Omega$ with $\mu^{+}(A)<\infty.$ We need to
show
\begin{equation}
\mu^{+}(A)\geq c_{1}\mu(A)(1-\mu(A))\frac{\phi_{X}((1-\mu(A))\mu(A))}{\phi
_{Y}((1-\mu(A))\mu(A))}. \label{obtenida}%
\end{equation}
Using a standard approximation argument (cf. \cite{MiE}) we get%
\[
\sqrt{2t}\mu^{+}(A)\geq\int\left|  \chi_{A}-P_{t}\chi_{A}\right|  d\mu.
\]
Rewrite the right hand side as follows
\begin{align*}
\int\left|  \chi_{A}-P_{t}\chi_{A}\right|  d\mu &  =\int_{A}\left(
1-P_{t}\chi_{A}\right)  d\mu+\int_{\Omega\diagdown A}P_{t}\chi_{A}%
d\mu=2\left(  \mu(A)-\int_{A}P_{t}\chi_{A}d\mu\right) \\
&  =2\left(  \mu(A)\left(  1-\mu(A)\right)  -\int_{\Omega}\left(  P_{t}%
\chi_{A}-\mu(A)\right)  \left(  \chi_{A}-\mu(A)\right)  d\mu\right)  .
\end{align*}
Using the fact that $X$ satisfies condition (a) we will show that there exists
a constant $c>0$ such that
\begin{align}
J(t)  &  =\int_{\Omega}\left(  P_{t}\left(  \chi_{A}-\mu(A)\right)  \right)
\left(  \chi_{A}-\mu(A)\right)  d\mu\nonumber\\
&  \leq\frac{4c}{\sqrt{2t}}\phi_{X}((1-\mu(A))\mu(A))\frac{(1-\mu(A))\mu
(A)}{\phi_{Y}((1-\mu(A))\mu(A))}. \label{virtual}%
\end{align}
This given, we deduce that
\begin{align*}
\mu^{+}(A)  &  \geq\frac{\mu(A)\left(  1-\mu(A)\right)  -J(t)}{\sqrt{2t}}\\
&  \geq(1-\mu(A))\mu(A)\left(  \frac{1}{\sqrt{2t}}-\frac{2c}{t}\frac{\phi
_{X}((1-\mu(A))\mu(A))}{\phi_{Y}((1-\mu(A))\mu(A))}\right)  .
\end{align*}
Choosing%
\[
t_{0}=16\left(  c\frac{\phi_{X}((1-\mu(A))\mu(A))}{\phi_{Y}((1-\mu(A))\mu
(A))}\right)  ^{2},
\]
we obtain (\ref{obtenida}). It remains to prove (\ref{virtual}). By
H\"{o}lder's inequality, (\ref{poipoi}) and (\ref{be1}), we find
\begin{align}
J(t)  &  =\int_{\Omega}\left(  P_{t}\left(  \chi_{A}-\mu(A)\right)  \right)
\left(  \chi_{A}-\mu(A)\right)  d\mu\nonumber\\
&  \leq\left\|  P_{t}\left(  \chi_{A}-\mu(A)\right)  \right\|  _{Y}\left\|
\chi_{A}-\mu(A)\right\|  _{Y^{\prime}}\nonumber\\
&  \leq\frac{c}{\sqrt{2t}}\left\|  \nabla P_{t}\left(  \chi_{A}-\mu(A)\right)
\right\|  _{X}\left\|  \chi_{A}-\mu(A)\right\|  _{Y^{\prime}}\nonumber\\
&  \leq\frac{c}{\sqrt{2t}}\left\|  \sqrt{P_{t}\left(  \chi_{A}-\mu(A)\right)
^{2}}\right\|  _{X}\left\|  \chi_{A}-\mu(A)\right\|  _{Y^{\prime}}.
\label{nova}%
\end{align}

If $X$ is $q$ concave, then $X^{(\frac{1}{q})}$ is an r.i. space and,
therefore, $P_{t}$ is bounded on $X^{(\frac{1}{q})}.$ Consequently,
\begin{align}
\left\|  \sqrt{P_{t}\left(  \chi_{A}-\mu(A)\right)  ^{2}}\right\|  _{X} &
=\left\|  \left(  P_{t}\left(  \chi_{A}-\mu(A)\right)  ^{2}\right)  ^{\frac
{q}{2}}\right\|  _{X^{(\frac{1}{q})}}^{q}\nonumber\\
&  =\left\|  P_{t}\left(  \chi_{A}-\mu(A)\right)  ^{q}\right\|  _{X^{(\frac
{1}{q})}}^{q}\text{ \ \ \ \ \ \ (since }q/2\geq1)\nonumber\\
&  \leq\left\|  \left(  \chi_{A}-\mu(A)\right)  ^{q}\right\|  _{X^{(\frac
{1}{q})}}^{q}\nonumber\\
&  =\left\|  \chi_{A}-\mu(A)\right\|  _{X}.\label{nova1}%
\end{align}

On the other hand, suppose now that $\bar{\alpha}_{X}<1/2$ holds$.$ Then,%
\begin{align}
\left\|  \sqrt{P_{t}\left(  \chi_{A}-\mu(A)\right)  ^{2}}\right\|  _{X}  &
\leq\left\|  \left(  \frac{1}{r}\int_{0}^{r}\left[  P_{t}\left(  \chi_{A}%
-\mu(A)\right)  \right]  ^{\ast}(s)^{2}ds\right)  ^{1/2}\right\|  _{\bar{X}%
}\nonumber\\
&  \leq c\left\|  P_{t}\left(  \chi_{A}-\mu(A)\right)  \right\|
\text{\ \ \ \ \ \ (since }\bar{\alpha}_{X}<1/2)\nonumber\\
&  \leq c\left\|  \chi_{A}-\mu(A)\right\|  _{X}. \label{nova2}%
\end{align}

To estimate the right hand side of (\ref{nova1}) and (\ref{nova2}) we note
that for any r.i. space $Z=Z(\Omega)$ we have,
\begin{align}
\left\|  \chi_{A}-\mu(A)\right\|  _{Z}  &  \leq(1-\mu(A))\left\|  \chi
_{A}\right\|  _{Z}+\mu(A)\left\|  \chi_{\Omega\diagdown A}\right\|
_{Z}\nonumber\\
&  =(1-\mu(A))\phi_{Z}(\mu(A))+\mu(A)\phi_{Z}(1-\mu(A))\nonumber\\
&  \leq2\phi_{Z}((1-\mu(A))\mu(A)), \label{nova3}%
\end{align}
where in the last inequality we have used the concavity of $\phi_{Z}$.

Combining (\ref{nova3}), (\ref{nova2}), (\ref{nova1}) and (\ref{nova}) yields
\begin{align*}
J(t)  &  \leq\frac{c}{\sqrt{2t}}\left\|  \chi_{A}-\mu(A)\right\|  _{X}\left\|
\chi_{A}-\mu(A)\right\|  _{Y^{\prime}}\\
&  \leq\frac{4c}{\sqrt{2t}}\phi_{X}((1-\mu(A))\mu(A))\phi_{Y^{\prime}}%
((1-\mu(A))\mu(A))\\
&  =\frac{4c}{\sqrt{2t}}\phi_{X}((1-\mu(A))\mu(A))\frac{(1-\mu(A))\mu(A)}%
{\phi_{Y}((1-\mu(A))\mu(A))}\text{ \ (by (\ref{dual})}).
\end{align*}

Therefore, (\ref{virtual}) holds and the desired result follows.
\end{proof}
\end{theorem}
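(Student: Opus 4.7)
The plan is to adapt E.~Milman's semigroup method (cf.\ \cite{MiE}) to the r.i.\ setting, replacing $L^{p}$-type computations by the two alternative hypotheses in Condition~(a), which play exactly the role needed to control the term $\sqrt{P_{t}f^{2}}$ arising from the Bakry--\'Emery pointwise bound~(\ref{be1}).

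Fix a Borel set $A\subset\Omega$ with $\mu^{+}(A)<\infty$ and set $a:=\mu(A)(1-\mu(A))$. First I would derive the standard semigroup-isoperimetric inequality
$$
\sqrt{2t}\,\mu^{+}(A)\ \geq\ \int_{\Omega}|\chi_{A}-P_{t}\chi_{A}|\,d\mu
$$
by a Lipschitz approximation of $\chi_{A}$, the identity $f-P_{t}f=-\int_{0}^{t}P_{s}\Delta_{(\Omega,\mu)}f\,ds$, and (\ref{be1}). Using $P_{t}1=1$ and self-adjointness, the right-hand side equals $2(a-J(t))$ with
$$
J(t)=\int_{\Omega}\bigl(P_{t}(\chi_{A}-\mu(A))\bigr)(\chi_{A}-\mu(A))\,d\mu.
$$

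The heart of the argument is bounding $J(t)$. By H\"older's inequality in $(Y,Y')$, the $(Y,X)$-Poincar\'e inequality of Condition~(b) applied to $P_{t}\chi_{A}$, and the Bakry--\'Emery estimate (\ref{be1}), one gets
$$
J(t)\ \leq\ \frac{c}{\sqrt{2t}}\,\bigl\|\sqrt{P_{t}(\chi_{A}-\mu(A))^{2}}\bigr\|_{X}\,\|\chi_{A}-\mu(A)\|_{Y'}.
$$
The main obstacle, and the only place where Condition~(a) enters, is to majorize $\|\sqrt{P_{t}f^{2}}\|_{X}$ by $\|f\|_{X}$ up to a constant. Under (a)(i), $X^{(1/q)}$ is an r.i.\ space on which $P_{t}$ is bounded, and the inequality $(P_{t}g)^{q/2}\leq P_{t}g^{q/2}$ (valid since $q/2\geq 1$) yields
$$
\bigl\|\sqrt{P_{t}f^{2}}\bigr\|_{X}^{q}=\|(P_{t}f^{2})^{q/2}\|_{X^{(1/q)}}\leq\|P_{t}|f|^{q}\|_{X^{(1/q)}}\leq\||f|^{q}\|_{X^{(1/q)}}=\|f\|_{X}^{q}.
$$
Under (a)(ii), since $P_{t}$ is Markov and hence contractive on $L^{1}$ and $L^{\infty}$, Calder\'on's rearrangement theorem gives $(P_{t}f^{2})^{\ast}(s)\leq(f^{2})^{\ast\ast}(s)$, whence
$$
\bigl(\sqrt{P_{t}f^{2}}\bigr)^{\ast}(s)\leq\Bigl(\tfrac{1}{s}\textstyle\int_{0}^{s}(f^{\ast}(u))^{2}\,du\Bigr)^{1/2},
$$
and $\bar{\alpha}_{X}<1/2$ is precisely what is needed for the analogue of Lemma~\ref{cota01} for the lower Hardy average at exponent $q=2$ to bound the right-hand side in $\bar X$ by $C\|f^{\ast}\|_{\bar X}=C\|f\|_{X}$.

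It remains to evaluate the factor norms on $f=\chi_{A}-\mu(A)$. For any r.i.\ space $Z$, the triangle inequality and concavity of $\phi_{Z}$ yield $\|\chi_{A}-\mu(A)\|_{Z}\leq 2\phi_{Z}(a)$, and by the duality identity (\ref{dual}) one has $\phi_{Y}(a)\phi_{Y'}(a)=a$; hence
$$
J(t)\ \leq\ \frac{4c}{\sqrt{2t}}\,\phi_{X}(a)\,\frac{a}{\phi_{Y}(a)}.
$$
Inserting this into $\mu^{+}(A)\geq (a-J(t))/\sqrt{2t}$ produces a lower bound of the form $a\bigl(\tfrac{1}{\sqrt{2t}}-\tfrac{C}{t}\tfrac{\phi_{X}(a)}{\phi_{Y}(a)}\bigr)$; optimizing in $t$ (the optimum occurs at $t\asymp (\phi_{X}(a)/\phi_{Y}(a))^{2}$) gives $\mu^{+}(A)\geq c_{1}a\,\phi_{Y}(a)/\phi_{X}(a)$, and taking the infimum over Borel sets with $\mu(A)=t$ yields the stated bound on $I_{(M,g,\mu)}(t)$.
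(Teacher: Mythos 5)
Your proposal is correct and follows the paper's semigroup argument step by step: the bound $\sqrt{2t}\,\mu^{+}(A)\geq\int|\chi_{A}-P_{t}\chi_{A}|\,d\mu$, the decomposition through $J(t)$, the H\"older--Poincar\'e--Bakry-\'Emery chain, the dichotomy of Condition (a) for $\bigl\|\sqrt{P_{t}f^{2}}\bigr\|_{X}$, the bound $\|\chi_{A}-\mu(A)\|_{Z}\leq 2\phi_{Z}(a)$ together with the duality $\phi_{Y}(a)\phi_{Y'}(a)=a$, and the final optimization in $t$. One small remark: your Calder\'on step $(P_{t}f^{2})^{\ast}(s)\leq(f^{2})^{\ast\ast}(s)$ in case (a)(ii), followed by boundedness of the averaging operator at exponent $2$ under $\bar{\alpha}_{X}<1/2$, is the correct reading; the paper's display (\ref{nova2}) as printed places $[P_{t}(\chi_{A}-\mu(A))]^{\ast}$ rather than $(\chi_{A}-\mu(A))^{\ast}$ inside the Hardy average, a claim that can fail in general (Jensen gives $P_{t}g^{2}\geq(P_{t}g)^{2}$ pointwise, so the needed inequality may go the wrong way), and the intended argument is evidently the one you write.
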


\begin{remark}
Note that for any r.i. space $Z=Z(\Omega),$ we have $Z^{(2)}\subset Z$, and
$Z^{(2)}$ is $2-$concave. It follows from the previous result that for any
smooth complete connected Riemannian manifold that satisfies the convexity
assumption (\ref{be}) the isoperimetric estimate
\[
I_{(M,g,\mu)}(t)\geq c_{1}t\frac{\phi_{Y}(t)}{\sqrt{\phi_{X}(t)}%
},\;\;\;0<t\leq1/2
\]
follows from
\[
\left\|  g-\int_{\Omega}gd\mu\right\|  _{Y}\leq c\left\|  \left|  \nabla
g\right|  \right\|  _{X},\ \ \ \ \forall g\in Lip(\Omega).
\]
\end{remark}

\section{Higher order Sobolev inequalities\label{secc:ga}}

In this section we consider the higher order versions of Theorem
\ref{teomain}. Since the setting of metric spaces is not adequate to deal with
higher order derivatives in this section we work on Riemannian manifolds.

Let $\Omega=(M,g)$ be a smooth complete connected Riemannian manifold equipped
with a probability measure $\mu.$ Under the presence of smoothness we can give
more precise formulae. The next result is essentially given in \cite{ga}, we
provide a detailed proof for the sake of completeness.

\begin{proposition}
\label{castigada}Let $I$ be an isoperimetric estimator. Suppose that $f\in
C^{\infty}\left(  \Omega\right)  $ is a positive function, and denote by
$d\mathcal{H}_{n-1}$ the corresponding $(n-1)$ dimensional measure on
$\{f=t\}$ associated with $d\mu.$ Moreover, suppose that $f$ has no degenerate
critical points. Then,

(i) For all regular values of $f$ (therefore a.e. $t>0$)%
\begin{equation}
\frac{d}{dt}(\mu_{f}(t))=\frac{1}{\left(  f_{\mu}^{\ast}\right)  ^{\prime}%
(\mu_{f}(t))}=-\int_{\{f=t\}}\frac{1}{\left|  \nabla f(x)\right|
}d\mathcal{H}_{n-1}(x). \label{uno}%
\end{equation}

(ii) For almost all $t$%
\begin{equation}
\int_{\{f=t\}}\left|  \nabla f(x)\right|  ^{q-1}d\mathcal{H}_{n-1}%
(x)\geq\left(  I(\mu_{f}(t))\right)  ^{q}\left(  \left(  -f_{\mu}^{\ast
}\right)  ^{\prime}(\mu_{f}(t))\right)  ^{q-1}. \label{dosii}%
\end{equation}
In particular, for all almost all $t\in\lbrack0,ess\sup f),$%
\[
\int_{\{f=f_{\mu}^{\ast}(t)\}}\left|  \nabla f(x)\right|  ^{q-1}%
d\mathcal{H}_{n-1}(x)\geq\left(  I(t))\right)  ^{q}\left(  \left(  -f_{\mu
}^{\ast}\right)  ^{\prime}(t))\right)  ^{q-1}.
\]

(iii) ($q-$Ledoux inequality)%
\begin{equation}
\int\left|  \nabla f(x)\right|  ^{q}d\mu\geq\int_{0}^{\infty}I(\mu_{f}%
(t))^{q}\left(  \left(  -f_{\mu}^{\ast}\right)  ^{\prime}(\lambda
_{f}(t))\right)  ^{q-1}dt. \label{dosi}%
\end{equation}
\end{proposition}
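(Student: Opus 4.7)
The overall strategy is to obtain (i) by differentiating the coarea representation of $\mu_f$, then deduce (ii) by Hölder's inequality applied on each level set combined with the isoperimetric inequality, and finally derive (iii) by integrating (ii) over level sets using the coarea formula in reverse.

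For part (i), the plan is to apply the coarea formula to the probability measure $d\mu$ (whose Radon--Nikodym derivative against $d\mathrm{vol}_M$ is smooth), which yields
\[
\mu_f(t)=\mu\{f>t\}=\int_t^{\infty}\!\!\left(\int_{\{f=s\}}\frac{1}{|\nabla f(x)|}\,d\mathcal{H}_{n-1}(x)\right)ds,
\]
so that differentiating in $t$ at a regular value gives the second equality in (\ref{uno}). Since $f$ has no degenerate critical points, the set of critical values has measure zero by Sard's theorem, so $\mu_f$ is locally absolutely continuous and strictly decreasing on $(0,\mathrm{ess\,sup}\,f)$, and for regular $t$ the identity $\mu_f(f_\mu^{\ast}(s))=s$ may be differentiated to get $\frac{d}{dt}\mu_f(t)=1/(f_\mu^{\ast})'(\mu_f(t))$, closing the first equality.

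For part (ii), on each level set $\{f=t\}$ I would apply Hölder's inequality with exponents $q$ and $q/(q-1)$ to the constant function $1=|\nabla f|^{(q-1)/q}\cdot|\nabla f|^{-(q-1)/q}$, yielding
\[
\mathcal{H}_{n-1}(\{f=t\})^{q}\le\left(\int_{\{f=t\}}|\nabla f|^{q-1}\,d\mathcal{H}_{n-1}\right)\left(\int_{\{f=t\}}\frac{1}{|\nabla f|}\,d\mathcal{H}_{n-1}\right)^{q-1}.
\]
The Minkowski content identity $\mathcal{H}_{n-1}(\{f=t\})=\mu^{+}(\{f>t\})$ together with the isoperimetric estimator gives $\mathcal{H}_{n-1}(\{f=t\})\ge I(\mu_f(t))$, while part (i) evaluates the second factor on the right as $((-f_\mu^{\ast})'(\mu_f(t)))^{-(q-1)}$; rearranging produces (\ref{dosii}). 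The ``in particular'' statement then follows by substituting $t\mapsto f_\mu^{\ast}(t)$ and invoking $\mu_f(f_\mu^{\ast}(t))=t$ for a.e.\ $t$.

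For part (iii), I would integrate (\ref{dosii}) against $dt$ on $(0,\infty)$ and apply the coarea formula once more, now to $|\nabla f|^{q}$, to recognize the left-hand side as $\int_{\Omega}|\nabla f|^{q}\,d\mu$. The main technical obstacle is purely measure-theoretic bookkeeping: ensuring that the level-set formulas hold on a set of full measure in $t$ (which is where the no-degenerate-critical-points hypothesis is essential, so that $f_\mu^{\ast}$ is strictly decreasing on its effective support and Sard applies), and that the Hölder step is lossless at $q=1$, where (ii) degenerates into the isoperimetric inequality and (iii) reduces to Ledoux's inequality (\ref{ledo}).
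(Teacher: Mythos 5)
Your proposal follows essentially the same route as the paper's proof: coarea formula for (i), Hölder on the level set together with the isoperimetric inequality for (ii), and integration via coarea for (iii). One small imprecision: for the coarea representation of $\mu_f$ to have no residual critical-set term, what you actually need is that the set of critical \emph{points} $\{|\nabla f|=0\}$ has $\mu$-measure zero (which follows from non-degeneracy, since Morse critical points are isolated), rather than Sard's statement that the set of critical \emph{values} has Lebesgue measure zero — the paper makes this explicit by writing out the decomposition $\mu_f(t)=\mu(\{f>t\}\cap\{|\nabla f|=0\})+\int_t^\infty\int_{\{f=s\}}|\nabla f|^{-1}\,d\mathcal{H}_{n-1}\,ds$ and observing that the first term vanishes under the stated hypothesis.
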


\begin{proof}
$(i)$ The co-area formula implies (cf. \cite[pag 157]{ciaetal})%
\[
\mu_{f}(t)=\mu\left(  \{f>t\}\cap\{\left|  \nabla f\right|  =0\}\right)
+\int_{t}^{\infty}\int_{\{f=s\}}\frac{1}{\left|  \nabla f(x)\right|
}d\mathcal{H}_{n-1}(x)ds.
\]
Our assumptions on $f$ imply that%
\[
\mu\left(  \{f>t\}\cap\{\left|  \nabla f\right|  =0\}\right)  =0,\text{
}a.e\text{.}%
\]
Consequently,%
\[
\frac{d}{dt}(\mu_{f}(t))=-\int_{\{f=t\}}\frac{1}{\left|  \nabla f(x)\right|
}d\mathcal{H}_{n-1}(x),\text{ }a.e.
\]
Since $f_{\mu}^{\ast}$ and $\mu_{f}$ restricted to $[0,ess\sup\left|
f\right|  ]$ are inverses (cf. \cite[pag 935]{Ta1}), we get%
\[
f_{\mu}^{\ast}(\mu_{f}(t))=t,
\]
and therefore the remaining formula in (\ref{uno}) follows.

$(ii)$ By the definition of isoperimetric profile%
\[
I(\mu_{f}(t))\leq\int_{\{f=t\}}d\mathcal{H}_{n-1}(x).
\]
We estimate the right hand side using H\"{o}lder's inequality,%
\begin{align*}
\int_{\{f=t\}}d\mathcal{H}_{n-1}(x)  &  =\int_{\{f=t\}}\left|  \nabla
f(x)\right|  ^{1/q^{\prime}}\frac{1}{\left|  \nabla f(x)\right|
^{1/q^{\prime}}}d\mathcal{H}_{n-1}(x)\\
&  \leq\left(  \int_{\{f=t\}}\left|  \nabla f(x)\right|  ^{q-1}d\mathcal{H}%
_{n-1}(x)\right)  ^{1/q}\left(  \int_{\{f=t\}}\frac{1}{\left|  \nabla
f(x)\right|  }d\mathcal{H}_{n-1}(x)\right)  ^{1/q^{\prime}}.
\end{align*}
Combining these inequalities we obtain%
\[
I(\mu_{f}(t))^{q}\leq\left(  \int_{\{f=t\}}\left|  \nabla f(x)\right|
^{q-1}d\mathcal{H}_{n-1}(x)\right)  \left(  \int_{\{f=t\}}\frac{1}{\left|
\nabla f(x)\right|  }d\mathcal{H}_{n-1}(x)\right)  ^{q-1}.
\]
Therefore, by (\ref{uno})%
\[
I(\mu_{f}(t))^{q}\left(  \left(  -f_{\mu}^{\ast}\right)  ^{\prime}(\mu
_{f}(t))\right)  ^{q-1}\leq\int_{\{f=t\}}\left|  \nabla f(x)\right|
^{q-1}d\mathcal{H}_{n-1}(x).
\]
$(iii)$ The co-area formula implies%
\[
\int_{0}^{\infty}\left(  \int_{\{f=t\}}\left|  \nabla f(x)\right|
^{q-1}d\mathcal{H}_{n-1}(x)\right)  dt=\int_{\Omega}\left|  \nabla
f(x)\right|  ^{q}d\mu,
\]
consequently (\ref{dosi}) follows by integrating (\ref{dosii}).
\end{proof}

\begin{remark}
\label{remarkao1}In particular if $q=1$ then (\ref{dosi}) becomes Ledoux's
inequality (cf. (\ref{ledo}) above)%
\[
\int_{0}^{\infty}I(\mu_{f}(t))dt\leq\int_{\Omega}\left|  \nabla f(x)\right|
d\mu.
\]
\end{remark}

\begin{remark}
Formulae (\ref{uno}) appears in several places in the literature (cf.
\cite[(1), pag 709]{Ta}, \cite[pag 81]{ber}, \cite[pag 52]{Ban}) with
different degrees of generality. In concrete applications when the ``correct''
symmetrization $f^{\circ}$ is available (e.g. $\mathbb{R}^{n},$ with Lebesgue
or Gaussian measure), then for smooth enough $f,$ we have for $a.e.$ $t,$%
\[
\mu\left(  \{f^{\circ}>t\}\cap\{\left|  \nabla f^{\circ}\right|  =0\}\right)
=0
\]
and
\[
\frac{d}{dt}(\mu_{f}(t))=-\int_{\{f^{\circ}=t\}}\frac{1}{\left|  \nabla
f^{\circ}(x)\right|  }d\mathcal{H}_{n-1}(x),\text{ }a.e.
\]
follows.
\end{remark}

\begin{remark}
To extend these inequalities we can use Morse theory. Indeed, it is well known
(cf. \cite[pag 37]{milnor}) that bounded smooth functions can be uniformly
approximated (together with their derivatives) by smooth functions with non
degenerate critical points.
\end{remark}

Our objective is to extend the first order estimates (\ref{dosa}) and
(\ref{rea}) of Theorem \ref{teomain}. The corresponding results are given by
our next theorem

\begin{theorem}
\label{teomarkao}Suppose that the assumptions of Proposition \ref{castigada}
hold. Then,

\begin{enumerate}
\item [(i)]Maz'ya-Talenti second order inequality%
\begin{equation}
-I(t)^{2}\left(  -f_{\mu}^{\ast}\right)  ^{\prime}(t)\leq\int_{0}^{t}\left|
\Delta f\right|  _{\mu}^{\ast}(s)ds,\text{ a.e.} \label{tresa}%
\end{equation}

\item[(ii)] Oscillation inequality%
\begin{equation}
f_{\mu}^{\ast\ast}(t)-f_{\mu}^{\ast}(t)\leq\frac{1}{t}\int_{0}^{t}\left(
\frac{s}{I(s)}\right)  ^{2}\left|  \Delta f\right|  _{\mu}^{\ast\ast}(s)ds
\label{tres}%
\end{equation}
\end{enumerate}
\end{theorem}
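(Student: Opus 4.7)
The plan is to reduce the second order statement to the first order machinery in Section \ref{secc:trunc} by specializing Proposition \ref{castigada}(ii) to the case $q=2$ and coupling it with a weighted Green identity, in the spirit of the Maz'ya--Talenti comparison method. Statement (ii) will then follow from (i) via the same integration by parts that powered step $(3)\Rightarrow(5)$ of Theorem \ref{teomain}.

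For (i), I would take $t = f_\mu^*(s)$ in Proposition \ref{castigada}(ii) with $q=2$, which yields, for a.e.\ $s$,
\[
\int_{\{f = f_\mu^*(s)\}} |\nabla f(x)|\, d\mathcal{H}_{n-1}(x) \;\geq\; I(s)^{2}\,(-f_\mu^*)'(s).
\]
Applying Green's formula for the weighted Laplacian on the super-level set $\{f > f_\mu^*(s)\}$, whose outer unit normal is $-\nabla f/|\nabla f|$, identifies the left-hand side with $\bigl|\int_{\{f > f_\mu^*(s)\}} \Delta f\, d\mu\bigr|$. The triangle inequality together with the Hardy--Littlewood bound (\ref{hp}), using $\mu\{f > f_\mu^*(s)\} \leq s$, then controls this by $\int_0^s |\Delta f|_\mu^*(r)\, dr$. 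Chaining the estimates yields (\ref{tresa}).

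For (ii), the integration by parts from the proof of $(3)\Rightarrow(5)$ of Theorem \ref{teomain} delivers
\[
f_\mu^{**}(t) - f_\mu^*(t) \;\leq\; \frac{1}{t}\int_0^t s\,(-f_\mu^*)'(s)\, ds,
\]
where the boundary term at $s=0$ is non-negative in virtue of (\ref{nuevadeli}). Rewriting the bound from (i) as $(-f_\mu^*)'(s) \leq (s/I(s)^{2})\,|\Delta f|_\mu^{**}(s)$ (via $\int_0^s |\Delta f|_\mu^* = s\,|\Delta f|_\mu^{**}(s)$) and inserting produces exactly (\ref{tres}).

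The main technical obstacle is the rigorous justification of Green's formula on $\{f > f_\mu^*(s)\}$ for a.e.\ $s$: one needs the boundary to be a smooth regular hypersurface on which $|\nabla f|$ does not vanish. The standing hypothesis that $f$ is smooth with no degenerate critical points, combined with Sard's theorem, secures this for a.e.\ level value; the general smooth case is then recovered via the Morse-type approximation mentioned in the remarks following Proposition \ref{castigada}.
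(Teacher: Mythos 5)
Your proposal follows the paper's argument essentially step by step. For (i) you apply Green's theorem on the super-level set $\{f > f_\mu^*(s)\}$, identify the boundary flux with $\int_{\{f=f_\mu^*(s)\}}|\nabla f|\,d\mathcal{H}_{n-1}$, bound this below via Proposition~\ref{castigada}(ii) at $q=2$, and pass to $\int_0^s|\Delta f|_\mu^*$ by the triangle inequality and (\ref{hp}); this is exactly the paper's chain (the paper merely writes the argument at a generic level $t$ and then substitutes $t\mapsto f_\mu^*(t)$ at the end, where you work with the parametrized level from the start). For (ii), the reduction to $f_\mu^{**}(t)-f_\mu^*(t)\leq\tfrac1t\int_0^t s\,(-f_\mu^*)'(s)\,ds$ followed by inserting the pointwise bound from (i) is again the paper's proof verbatim. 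One small inaccuracy: the nonnegativity of the discarded boundary term $\lim_{s\to0^+}s\bigl(f_\mu^*(s)-f_\mu^*(t)\bigr)$ follows simply from the monotonicity of $f_\mu^*$, not from the embedding chain (\ref{nuevadeli}); the finiteness of that limit, which one needs to justify the integration by parts when the integral is finite, is what requires an estimate such as (\ref{boca}). This does not affect the validity of the argument.
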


\begin{proof}
(i) In preparation to use Green's formula we write%
\[
\Delta f=-div(\nabla f).
\]
Note that the level surface $\{f=t\}=\partial\{f>t\}$ and moreover that the
formula for the inner unit normal to $\{f=t\}$ at a point $x$ is given by
\[
\nu(x)=\frac{\nabla f(x)}{\left|  \nabla f(x)\right|  }.
\]
Therefore, by Green's theorem,%
\begin{align*}
-\int_{\{f>t\}}\Delta f(x)d\mu &  =\int_{\{f>t\}}div(\nabla f)\\
&  =\int_{\{f=t\}}\frac{\left|  \nabla f(x)\right|  ^{2}}{\left|  \nabla
f(x)\right|  }d\mathcal{H}_{n-1}(x)\\
&  \geq I(\mu_{f}(t))^{2}\left(  -f_{\mu}^{\ast}\right)  ^{\prime}(\mu
_{f}(t))\text{ (by (\ref{dosii})).}%
\end{align*}
Consequently for a.e. $t,$%
\begin{align*}
I(t)^{2}\left(  -f_{\mu}^{\ast}\right)  ^{\prime}(t)  &  \leq\int_{\{f>f_{\mu
}^{\ast}(t)\}}\left|  \Delta f(x)\right|  d\mu\\
&  \leq\int_{0}^{t}\left|  \Delta f(x)\right|  _{\mu}^{\ast}(s)ds,
\end{align*}
as we wished to show.

(ii) We start with the familiar (cf. Theorem \ref{teomain} above, specially
the proof of $(3)\Rightarrow(5)$)$,$
\[
f_{\mu}^{\ast\ast}(t)-f_{\mu}^{\ast}(t)\leq\frac{1}{t}\int_{0}^{t}s\left(
-f_{\mu}^{\ast}\right)  ^{\prime}(s)ds.
\]
We work with the right hand side as follows,%
\begin{align*}
\frac{1}{t}\int_{0}^{t}s\left(  -f_{\mu}^{\ast}\right)  ^{\prime}(s)ds  &
=\frac{1}{t}\int_{0}^{t}\frac{s}{I(s)^{2}}I(s)^{2}\left(  -f_{\mu}^{\ast
}\right)  ^{\prime}(s)ds\\
&  \leq\frac{1}{t}\int_{0}^{t}\frac{s}{I(s)^{2}}\left(  \frac{s}{s}\int
_{0}^{s}\left|  \Delta f\right|  _{\mu}^{\ast}(u)du\right)  ds\text{ (by
(\ref{tresa}))}\\
&  =\frac{1}{t}\int_{0}^{t}\left(  \frac{s}{I(s)}\right)  ^{2}\left|  \Delta
f\right|  _{\mu}^{\ast\ast}(s)ds.
\end{align*}
\end{proof}

\begin{remark}
Since in this paper we assume that I(s) is concave, therefore we see that
(\ref{tres}) implies the more suggestive inequality%
\begin{equation}
f_{\mu}^{\ast\ast}(t)-f_{\mu}^{\ast}(t)\leq\left(  \frac{t}{I(t)}\right)
^{2}\frac{1}{t}\int_{0}^{t}\left|  \Delta f\right|  _{\mu}^{\ast\ast}(s)ds.
\label{cuatro}%
\end{equation}
\end{remark}

We discuss briefly some examples. It follows from (\ref{cuatro}) and a routine
approximation that for r.i. spaces away from $L^{1}$ (i.e. $\bar{\alpha}%
_{X}<1$) we have%
\begin{equation}
\left\|  \left(  f_{\mu}^{\ast\ast}(t)-f_{\mu}^{\ast}(t)\right)  \left(
\frac{I(t)}{t}\right)  ^{2}\right\|  _{\bar{X}}\preceq\left\|  \left|  \Delta
f\right|  \right\|  _{X},\text{ }f\in C^{\infty}(\Omega). \label{jornada}%
\end{equation}

In the Euclidean case (\ref{jornada}) can be used to extend the results in
\cite{mp}, while in the Gaussian case they provide an extension of the results
in \cite{fei}, \cite{bakme}, \cite{bakme2}, \cite{shi} to the context of r.i.
spaces. For comparison we note that the method of proof used in these
references is completely different.

For example, to recover the higher order Gaussian $L^{p}$ Sobolev results in
these references, we just need to observe that in this case%
\[
\left\|  \left(  f_{\mu}^{\ast\ast}(t)-f_{\mu}^{\ast}(t)\right)  \left(
\frac{I(t)}{t}\right)  ^{2}\right\|  _{L^{p}}\simeq\left\|  f\right\|
_{L^{p}(LogL)^{p}}.
\]

Our inequalities also apply to the measures
\[
\mu_{p,\alpha}=Z_{p,\alpha}^{-1}\exp\left(  -\left|  x\right|  ^{p}%
(\log(\gamma+\left|  x\right|  )^{\alpha}\right)  dx,
\]
discussed in Example \ref{exex} above. The corresponding inequalities can be
readily obtained since we have precise estimates of the isoperimetric profiles
$I_{\mu_{p,\alpha}^{\otimes n}}(s).$

In the next section we shall see a considerable extension of these results, as
well as applications to the study of non-linear elliptic equations.

\section{Integrability of solutions of elliptic equations\label{secc:el}}

The techniques discussed in this paper also have applications to the study of
the integrability and regularity of the solutions of non-linear elliptic
equations of the form%

\begin{equation}
\left\{
\begin{array}
[c]{ll}%
-div(a(x,u,\nabla u))=fw & \text{ in }G,\\
u=0 & \text{ on }\partial G,
\end{array}
\right.  \label{equaxx}%
\end{equation}
where $G$ is domain of $\mathbb{R}^{n}$ ($n\geq2),$ such that $\mu=w(x)dx$ is
a probability measure on $\mathbb{R}^{n},$ or $G$ has Lebesgue measure $1$ if
$w=1,$ and $a(x,\eta,\xi):G\times\mathbb{R}\times\mathbb{R}^{n}\rightarrow
\mathbb{R}^{n}$ is a Carath\'{e}odory function such that for some fixed
$p>1$,
\begin{equation}
a(x,t,\xi).\xi\geq w(x)\left|  \xi\right|  ^{p},\text{ \ \ for a.e. }x\in
G\subset\mathbb{R}^{n},\text{ \ }\forall\eta\in\mathbb{R},\text{ \ }\forall
\xi\in\mathbb{R}^{n}. \label{elipxx}%
\end{equation}

In what follows to fix ideas and simplify the presentation we take
\[
p=2,
\]
but were appropriate we shall indicate the necessary changes to deal with the
general case (cf. Remark \ref{remarkp} below).

To see what results are possible consider the special case$,w=1,a(x,t,\xi
)=\xi.$ Then (\ref{equaxx}) becomes
\[
\left\{
\begin{array}
[c]{ll}%
\tilde{\Delta}u=f & \text{ in }G,\\
u=0 & \text{ on }\partial G.
\end{array}
\right.
\]
In this case we can derive \textit{a priori} sharp integrability of the
solutions directly from the results in Section \ref{secc:ga} to find that
\[
\left(  -u_{\mu}^{\ast}\right)  ^{\prime}(t)\left(  \frac{I(t)}{t}\right)
^{2}\leq\frac{1}{t}\int_{0}^{t}f_{\mu}^{\ast\ast}(s)ds,
\]
where $I=I_{(\mathbb{R}^{n};\mu)}$ is the isoperimetric profile of
$(\mathbb{R}^{n};\mu).$ These estimates lead to the following \textit{a
priori} sharp integrability result
\[
\left\|  \left(  u_{\mu}^{\ast\ast}(t)-u_{\mu}^{\ast}(t)\right)  \left(
\frac{I(t)}{t}\right)  ^{2}\right\|  _{\bar{X}}\preceq\left\|  f_{\mu}%
^{\ast\ast}\right\|  _{X}.
\]

In this section we shall extend these estimates to solutions of (\ref{equaxx})
(cf. Theorem \ref{base}). Moreover, we also obtain results on the regularity
of $\left|  \nabla f\right|  .$ For example, we will show that
\[
\left|  \nabla u\right|  _{\mu}^{\ast}(t)\leq\left(  \frac{2}{t}\int
_{t/2}^{\mu(G)}\left(  \frac{I(s)}{s}f_{\mu}^{\ast\ast}(s)\right)
^{2}ds\right)  ^{1/2}.
\]
These estimates can be used to obtain, under suitable assumptions on $\bar{X}
$ (cf. Theorem \ref{remderv} below),
\[
\left\|  \frac{I(t)}{t}\left|  \nabla u\right|  _{\mu}^{\ast}(t)\right\|
_{\bar{X}}\preceq\left\|  f_{\mu}^{\ast\ast}\right\|  _{\bar{X}}.
\]

As with most other results in this paper, our estimates incorporate the
isoperimetric profile and thus are valid for different geometries. In
particular, our results are valid for domains on $\mathbb{R}^{n}$ provided
with Lebesgue or Gaussian measure, and in both instances our \textit{a priori}
integrability results are sharp. In fact, the integrability results that we
obtain contain all the known results (previously known for specific r.i.
spaces like Orlicz or Lorentz spaces), and, furthermore, are new and sharper
on the borderline cases. The integrability of the gradient is a more difficult
problem for these methods, and here our results are not definitive even
though, for a certain range of values of the parameters, we extend and improve
on the classical results (cf. \cite{AFT}, \cite{BBGBPV}, \cite{muller}, for
more on this point as well as an extensive list of references).

To proceed we needed an adequate notion of solution. Indeed, in the literature
one can find a number of different definitions of what is ``a'' solution for
problem (\ref{equaxx}). However, under fairly general conditions it is well
known that many of these definitions coincide (cf. \cite{AFT}). We adopt the
definition of entropy (or entropic) solution{\footnote{For example, in the
classical case (i.e. $w(x)=1$ and $G$ bounded), under further assumptions on
$a(x,t,\xi),$ it has been proved that an entropy solution of (\ref{equaxx})
exists (see, for example, \cite{BBGBPV} and the references therein).} since it
is better adapted for our techniques}. We recall that a measurable function
$u$ is an entropy solution of (\ref{equaxx}) if, for all $t>0$, $\max
\{|u|,t\}\mbox
{sign}\{u\}$ belongs to $W_{0}^{1,2}(w,G)$\footnote{One could start with more
general $u^{\prime}s$ but it can be showed that if $f\in L^{1}(w,G),$ then an
entropy solution will automatically belong to $W_{0}^{1,2}(w,G).$ If $p>1,$
then one requires $p>2-1/n,$ in order to gurantee that entropy solutions
belong to $W_{0}^{1,p}(w,G).$}, and
\[
\int_{|u-\psi|<t}a(x,u,\nabla u)(\nabla u-\nabla\psi)dx\leq\int_{|u-\psi
|<t}fwdx,
\]
for every $\psi\in W_{0}^{1,2}(w,G)\cap L^{\infty}(G)$, where the weighted
Sobolev space $W_{0}^{1,2}(w,G)$ is the closure of $C_{0}^{\infty}(G)$ under
the norm
\[
\left\|  u\right\|  _{W_{0}^{1,2}(w,G)}^{2}=\int_{G}\left|  u(x)\right|
^{2}w(x)dx+\int_{G}\left|  \nabla u(x)\right|  ^{2}w(x)dx.
\]
It is known, for example, that if $f\in W^{-1,2}(w,G),$ the notion of entropy
solution coincides with the usual definition of weak solution (cf. \cite{AFT}).

The relation between, isoperimetry and the rearrangements of entropic
solutions is given by the following:

\begin{theorem}
\label{base}Let $u\in W_{0}^{1,1}(w,G)$ be a solution of (\ref{equaxx}). Let
$\mu=w(x)dx,$ and let $I=I_{(\mathbb{R}^{n};\mu)}$ be the isoperimetric
profile of $(\mathbb{R}^{n};\mu).$ Then, the following inequalities hold

\begin{enumerate}
\item
\begin{equation}
\left(  -u_{\mu}^{\ast}\right)  ^{\prime}(t)I(t)^{2}\leq\int_{0}^{t}f_{\mu
}^{\ast}(s)ds,\text{ }a.e. \label{func}%
\end{equation}

\item
\begin{equation}
\int_{t}^{\mu\left(  G\right)  }\left(  \left|  \nabla u\right|  ^{2}\right)
_{\mu}^{\ast}(s)ds\leq\int_{t}^{\mu\left(  G\right)  }\left(  \left(  -u_{\mu
}^{\ast}\right)  ^{\prime}(s)\int_{0}^{s}f_{\mu}^{\ast}(z)dz\right)  ds.
\label{derv}%
\end{equation}
\end{enumerate}
\end{theorem}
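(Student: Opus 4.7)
My plan is to reduce both parts to a single differential inequality for $F(s):=\int_{\{|u|>s\}}|\nabla u|^{2}\,d\mu$ and then combine it, for part (1), with the isoperimetric--Cauchy--Schwarz argument of Proposition~\ref{castigada}, and, for part (2), with a change of variables plus the Hardy--Littlewood rearrangement inequality. The starting point is to test the entropy formulation of (\ref{equaxx}) against the two-level truncation
\[
\psi = u - \phi_{h,s}(u), \qquad \phi_{h,s}(r) = \mathrm{sign}(r)\,\min\!\bigl(\max(|r|-s,0),\,h\bigr),
\]
with $h,s>0$. Since $\psi\in W_{0}^{1,2}(w,G)\cap L^{\infty}(G)$, $\{|u-\psi|<h\}=\{|u|<s+h\}$, and $\nabla u-\nabla\psi = \nabla u\,\chi_{\{s<|u|<s+h\}}$, the entropy inequality together with the coercivity (\ref{elipxx}) gives
\[
\int_{\{s<|u|<s+h\}} w\,|\nabla u|^{2}\,dx \le h\int_{\{|u|>s\}}|f|\,w\,dx.
\]
Dividing by $h$, sending $h\downarrow 0$, and bounding the right-hand side via Hardy--Littlewood yields the key inequality
\[
-F'(s) \le \int_{0}^{\mu_{u}(s)} f_{\mu}^{\ast}(z)\,dz \qquad (\ast)
\]
for a.e.\ $s$.

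For (\ref{func}), I combine $(\ast)$ with the co-area identities $-F'(s)=\int_{\{|u|=s\}}|\nabla u|\,w\,d\mathcal{H}_{n-1}$ and $-\mu_{u}'(s)=\int_{\{|u|=s\}} w/|\nabla u|\,d\mathcal{H}_{n-1}$; Cauchy--Schwarz on the level set with the isoperimetric bound $\mu^{+}(\{|u|>s\})\ge I(\mu_{u}(s))$ (exactly as in Proposition~\ref{castigada}(ii) for $q=2$) gives
\[
I(\mu_{u}(s))^{2} \le (-F'(s))(-\mu_{u}'(s)) \le (-\mu_{u}'(s))\int_{0}^{\mu_{u}(s)} f_{\mu}^{\ast}(z)\,dz,
\]
and the substitution $t=\mu_{u}(s)$, $-\mu_{u}'(s)=1/(-u_{\mu}^{\ast})'(t)$, produces (\ref{func}). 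For (\ref{derv}), I integrate $(\ast)$ from $0$ to $u_{\mu}^{\ast}(t)$ and change variables via $r=\mu_{u}(s)$ (so $s=u_{\mu}^{\ast}(r)$, $ds=-(-u_{\mu}^{\ast})'(r)\,dr$, and $s\in[0,u_{\mu}^{\ast}(t)]$ corresponds to $r\in[t,\mu(G)]$), obtaining
\[
F(0)-F(u_{\mu}^{\ast}(t)) \le \int_{t}^{\mu(G)} (-u_{\mu}^{\ast})'(r)\int_{0}^{r} f_{\mu}^{\ast}(z)\,dz\,dr.
\]
Finally, since $\mu(\{|u|>u_{\mu}^{\ast}(t)\})\le t$, Hardy--Littlewood yields $F(u_{\mu}^{\ast}(t))\le\int_{0}^{t}(|\nabla u|^{2})_{\mu}^{\ast}(s)\,ds$, hence
\[
\int_{t}^{\mu(G)}(|\nabla u|^{2})_{\mu}^{\ast}(s)\,ds = F(0)-\int_{0}^{t}(|\nabla u|^{2})_{\mu}^{\ast}(s)\,ds \le F(0)-F(u_{\mu}^{\ast}(t)),
\]
which combined with the preceding display yields (\ref{derv}).

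The main technical obstacle is that an entropy solution is not a priori smooth, so the isoperimetric--Cauchy--Schwarz step underlying Proposition~\ref{castigada} (and the co-area-based differentiability of $\mu_{u}$) does not apply directly to $u$. I would handle this by approximating each truncation $T_{k}(u)\in W_{0}^{1,2}(w,G)\cap L^{\infty}(G)$ in $W^{1,2}(w,G)$ by $C^{\infty}$ functions without degenerate critical points (the Morse-type approximation alluded to in the remark following Proposition~\ref{castigada}), proving the isoperimetric bound at that level, and then passing to the limit via the monotone convergence of distribution functions and the $L^{1}$-stability of rearrangements; Sard's theorem ensures that the co-area identities hold at a.e.\ regular value. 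The inequality $(\ast)$ itself, being obtained from the entropy formulation by a difference-quotient argument, is valid for $u$ without any extra regularity.
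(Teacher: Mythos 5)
Your argument is essentially the paper's, with one real difference in how the isoperimetric profile enters part (1). The paper applies Cauchy--Schwarz to the annular difference quotient $J(t,h)=\frac{1}{h}\int_{\{t<|u|\le t+h\}}|\nabla u|^{2}\,d\mu$, obtaining $\bigl(-\frac{d}{dt}\int_{\{|u|>t\}}|\nabla u|\,d\mu\bigr)^{2}\le-\mu_{u}'(t)\int_{\{|u|>t\}}|f|\,d\mu$ in the limit $h\to0$, and then brings in $I$ via the Maz'ya--Talenti inequality (\ref{dosa}) of Theorem~\ref{teomain} applied to $u$. You instead set $F(s)=\int_{\{|u|>s\}}|\nabla u|^{2}\,d\mu$, derive $-F'(s)\le\int_{0}^{\mu_{u}(s)}f_{\mu}^{\ast}$ from the entropy formulation, and feed in isoperimetry via Cauchy--Schwarz on the level set $\{|u|=s\}$ (the $q=2$ case of Proposition~\ref{castigada}(ii) applied to $u$), getting $I(\mu_{u}(s))^{2}\le(-F'(s))(-\mu_{u}'(s))$. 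Both routes give (\ref{func}) after the substitution $t=\mu_{u}(s)$, $-\mu_u'(s)=1/(-u_\mu^*)'(t)$. The trade-off is real: your level-set Cauchy--Schwarz needs the coarea formula for $u$, hence the Morse/Sard approximation you flag, whereas (\ref{dosa}) is established by the truncation method and does not presuppose level-set regularity, so the paper's route is somewhat more forgiving for rough entropy solutions. Part (2) is then essentially identical in the two write-ups: integrating $(\ast)$ in the height variable and substituting $r=\mu_{u}(s)$ is the integrated form of the paper's substitution $t_{1}=u_{\mu}^{\ast}(s+h)$, $t_{2}=u_{\mu}^{\ast}(s)$ into the truncation estimate, and the concluding Hardy--Littlewood comparison of $F(u_{\mu}^{\ast}(t))$ with $\int_{0}^{t}(|\nabla u|^{2})_{\mu}^{\ast}$ is the same.

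Two small corrections. Your test function $\psi=u-\phi_{h,s}(u)$ is not bounded when $u$ is unbounded (it differs from $u$ by a quantity of modulus at most $h$), so it is not admissible in the entropy formulation, which requires $\psi\in W_{0}^{1,2}(w,G)\cap L^{\infty}(G)$; what you should use is $\psi=T_{s}(u)$ (truncation of $u$ at height $s$), for which $T_{h}(u-\psi)=\phi_{h,s}(u)$ and all your computations go through verbatim. Second, before integrating $(\ast)$ you need to know $F$ (equivalently $\Phi(t)=\int_{\{|u|\le t\}}|\nabla u|^{2}\,d\mu$) is absolutely continuous; the paper derives a Lipschitz bound on $\Phi$ from the entropy estimate precisely for this purpose, and you should record this step, since the pointwise a.e.\ inequality $-F'(s)\le\cdots$ does not by itself integrate to the desired conclusion.
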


\begin{proof}
As in \cite[pag 712]{Ta} (or \cite{BBMP} when $w$ is the Gaussian density
function) we can suppose without loss of generality that $G=\mathbb{R}^{n},$
since any function from $W_{0}^{1,1}(w,G)$ is a function belonging to
$W_{0}^{1,1}(w,\mathbb{R}^{n})$ vanishing outside $G.$ Let $u$ be an (entropy)
solution of (\ref{equaxx}). Let $0<t<t+h<\infty.$ Consider the test function
given by \footnote{This is a standard procedure which has been used by many
authors see for example, \cite{Ta}, \cite{Ta1}, \cite{BBGBPV}, \cite{AFT} and
the references therein.}
\[
u_{t}^{t+h}(x)=\left\{
\begin{array}
[c]{ll}%
hsign(u)\text{ } & \text{if }\left|  u(x)\right|  >t+h,\\
\left(  \left|  u(x)\right|  -t\right)  sign(u) & \text{if }t<\left|
u(x)\right|  \leq t+h,\\
0 & \text{if }\left|  u(x)\right|  \leq t.
\end{array}
\right.
\]
Then, by the definition of entropic solution, we get
\begin{align}
J(t,h)  &  =\frac{1}{h}\int_{\left\{  t<\left|  u(x)\right|  \leq t+h\right\}
}\left|  \nabla u(x)\right|  ^{2}d\mu\nonumber\\
&  \leq\int_{\left\{  t<\left|  u(x)\right|  \leq t+h\right\}  }\left|
f(x)\right|  d\mu+\int_{\left\{  \left|  u(x)\right|  >t+h\right\}  }\left|
f(x)\right|  d\mu. \label{base1'}%
\end{align}
By H\"{o}lder's inequality,
\[
\left(  \frac{1}{h}\int_{\left\{  t<\left|  u(x)\right|  \leq t+h\right\}
}\left|  \nabla u(x)\right|  d\mu\right)  ^{2}\leq J(t,h)\left(  \frac{\mu
_{u}(t)-\mu_{u}(t+h)}{h}\right)  .
\]
Combining the last inequality (\ref{base1'}), and then letting $h\rightarrow
0,$ we find that
\[
\left(  -\frac{d}{dt}\int_{\left\{  \left|  u(x)\right|  >t\right\}  }\left|
\nabla u(x)\right|  d\mu\right)  ^{2}\leq-\frac{d\mu_{u}}{dt}(t)\int_{\left\{
\left|  u(x)\right|  >t\right\}  }\left|  f(x)\right|  d\mu.
\]
Replacing $t$ by $u_{\mu}^{\ast}(t)$ and using the chain rule and (\ref{dosa})
of Theorem \ref{teomain}, we obtain%
\[
\left(  \left.  \frac{d}{dt}\int_{\{\left|  u(x)\right|  >\cdot\}}\left|
\nabla u(x)\right|  d\mu\right|  _{u_{\mu}^{\ast}(t)}\right)  ^{2}\geq
(-u_{\mu}^{\ast})^{\prime}(t)\left[  I(t)\right]  ^{2}.
\]
On the other hand, as shown in \cite[pag 936, discussion in (iii)]{tal1},
\[
-\frac{d\mu_{u}}{dt}(u_{\mu}^{\ast}(t))\leq1,\text{ a.e.}%
\]
Therefore we arrive at
\[
(-u_{\mu}^{\ast})^{\prime}(t)\left[  I(t)\right]  ^{2}\leq\int_{0}^{t}f_{\mu
}^{\ast}(s)ds,
\]
as we wished to show.

Following \cite{AFT} we consider the function
\[
\Phi(t)=\int_{\left\{  \left|  u(x)\right|  \leq t\right\}  }\left|  \nabla
u(x)\right|  ^{2}d\mu,\text{ \ \ }t\in(0,\infty).
\]
It is plain that $\Phi$ is increasing, moreover, by a suitable change of
notation, (\ref{base1'}) yields that, for $0<t_{1}<t_{2},$%
\begin{align*}
\Phi(t_{1})-\Phi(t_{2})  &  =\int_{\left\{  t_{1}<\left|  u(x)\right|  \leq
t_{2}\right\}  }\left|  \nabla u(x)\right|  ^{2}d\mu\\
&  \leq\left(  t_{2}-t_{1}\right)  \left(  \int_{\left\{  t_{1}<\left|
u(x)\right|  \leq t_{2}\right\}  }\left|  f(x)\right|  d\mu+\int_{\left\{
\left|  u(x)\right|  >t_{2}\right\}  }\left|  f(x)\right|  d\mu\right) \\
&  \leq2\left(  t_{2}-t_{1}\right)  \left\|  f\right\|  _{L^{1}}.
\end{align*}
Consequently, $\Phi$ is a Lipschitz continuous function. Pick $t_{1}=u_{\mu
}^{\ast}(s+h)$ and $t_{2}=u_{\mu}^{\ast}(s)$, then, upon dividing both sides
of the previous inequality by $h,$ we find that
\begin{align*}
&  \frac{\Phi(u_{\mu}^{\ast}(s+h))-\Phi(u_{\mu}^{\ast}(s))}{h}\\
&  \leq\left(  \frac{u_{\mu}^{\ast}(s)-u_{\mu}^{\ast}(s+h)}{h}\right)  \left(
\int_{\left\{  u_{\mu}^{\ast}(s+h)<\left|  u(x)\right|  \leq u_{\mu}^{\ast
}(s)\right\}  }\left|  f(x)\right|  d\mu+\int_{\left\{  \left|  u(x)\right|
>u_{\mu}^{\ast}(s)\right\}  }\left|  f(x)\right|  d\mu\right)  .
\end{align*}
Letting $h\rightarrow0$ we obtain
\begin{equation}
-\frac{\partial}{\partial s}\left(  \Phi(u_{\mu}^{\ast}(s))\right)
\leq\left(  -u_{\mu}^{\ast}\right)  ^{\prime}(s)\int_{0}^{s}f_{\mu}^{\ast
}(r)dr. \label{basicder}%
\end{equation}
Integrating (\ref{basicder}) from $t$ to $\mu\left(  G\right)  $ we get
\[
\Phi(u_{\mu}^{\ast}(t))-\Phi(u_{\mu}^{\ast}(\mu\left(  G\right)  )\leq\int
_{t}^{\mu\left(  G\right)  }\left(  \left(  -u_{\mu}^{\ast}\right)
^{^{\prime}}(s)\int_{0}^{s}f_{\mu}^{\ast}(r)dr\right)  ds.
\]
Now, since $u=0$ on $\partial G,$ it follows that $u_{\mu}^{\ast}(\mu\left(
G\right)  )=0$ (cf. also \cite[(317)]{tal1})$.$ Thus $\Phi(u_{\mu}^{\ast}%
(\mu\left(  G\right)  )=0,$ and consequently the previous inequality becomes
\begin{equation}
\int_{\left\{  \left|  u\right|  \leq u_{\mu}^{\ast}(t)\right\}  }\left|
\nabla u(x)\right|  ^{2}d\mu\leq\int_{t}^{\mu\left(  G\right)  }\left(
\left(  -u_{\mu}^{\ast}\right)  ^{^{\prime}}(s)\int_{0}^{s}f_{\mu}^{\ast
}(r)dr\right)  ds. \label{vers1}%
\end{equation}
On the other hand, by the definition of decreasing rearrangement (see
\cite[Page 70]{KPS}), we have
\begin{align}
\int_{\left\{  \left|  u\right|  \leq u_{\mu}^{\ast}(t)\right\}  }\left|
\nabla u(x)\right|  ^{2}d\mu &  \geq\inf_{\mu(E)=\mu\left\{  \left|  u\right|
\leq u_{\mu}^{\ast}(t)\right\}  }\int_{E}\left|  \nabla u(x)\right|  ^{2}%
d\mu=\int_{\mu\left\{  \left|  u\right|  >u_{\mu}^{\ast}(t)\right\}  }%
^{\mu\left(  G\right)  }\left(  \left|  \nabla u\right|  ^{2}\right)  _{\mu
}^{\ast}(s)ds\nonumber\\
&  \geq\int_{t}^{\mu\left(  G\right)  }\left(  \left|  \nabla u\right|
^{2}\right)  _{\mu}^{\ast}(s)ds. \label{ver2}%
\end{align}
Combining (\ref{vers1}) and (\ref{ver2}) we obtain (\ref{derv}).
\end{proof}

We now make explicit the sharp \textit{a priori} integrability conditions for
solutions of (\ref{equaxx}) that are implied by our analysis. It is here that
the isoperimetric profile pays a crucial role in determining the correct
nature of the estimates: e.g. in the Gaussian case it automatically leads to
$L^{p}(LogL)^{q}$ integrability conditions, etc.

The analysis that follows is natural extension of the one given in Section
\ref{secc::po}. Consequently, there is a natural Hardy type operator
associated with the isoperimetric profile that we shall use to study the
integrability of solutions of (\ref{equaxx}), namely the operator $R_{I}$
(compare with the operator $Q_{I}$ defined by (\ref{olvidada}) above),
\[
R_{I}(h)(t)=\int_{t}^{\mu(G)}\left(  \frac{s}{I(s)}\right)  ^{2}h(s)\frac
{ds}{s}.
\]

\begin{theorem}
\label{opti00Elip}Let $X,Y$ be two r.i. spaces on $G$ such that $\overline
{\alpha}_{X}<1$ (cf. Remark \ref{alcance}), and
\begin{equation}
\left\|  R_{I}(h)\right\|  _{\bar{Y}}\preceq\left\|  h\right\|  _{\bar{X}}.
\label{integral}%
\end{equation}
Then, if $u$ is a solution of (\ref{equaxx}) with datum $f\in X(G),$ we have
\begin{equation}
\left\|  u_{\mu}^{\ast}\right\|  _{\bar{Y}}\preceq\left\|  f_{\mu}^{\ast
}\right\|  _{\bar{X}}. \label{one}%
\end{equation}
and
\begin{equation}
\left\|  u_{\mu}^{\ast}\right\|  _{\bar{Y}}\preceq\left\|  \left(  \frac
{I(t)}{t}\right)  ^{2}\left(  u_{\mu}^{\ast\ast}(t)-u_{\mu}^{\ast}(t)\right)
\right\|  _{\bar{X}}+\left\|  u_{\mu}^{\ast}\right\|  _{L^{1}}\preceq\left\|
f_{\mu}^{\ast}\right\|  _{\bar{X}}. \label{three}%
\end{equation}
Moreover, in the case that the operator $\tilde{R}_{I}(h)(t)=\left(
\frac{I(s)}{s}\right)  ^{2}\int_{t}^{\mu(G)}\left(  \frac{s}{I(s)}\right)
^{2}h(s)\frac{ds}{s}$ is bounded on $\bar{X},$ then if $u$ is the solution of
(\ref{equaxx}) with datum $f\in X(G)$, we have
\begin{equation}
\left\|  u_{\mu}^{\ast}\right\|  _{\bar{Y}}\preceq\left\|  \left(  \frac
{I(t)}{t}\right)  ^{2}u_{\mu}^{\ast}(t)\right\|  _{\bar{X}}\preceq\left\|
f_{\mu}^{\ast}\right\|  _{\bar{X}}. \label{two}%
\end{equation}
\end{theorem}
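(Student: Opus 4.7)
The starting point is the pointwise estimate (\ref{func}) of Theorem \ref{base}, which I rewrite as $(-u_\mu^{\ast})'(t)\leq (t/I(t)^2) f_\mu^{\ast\ast}(t)$. Integrating from $t$ to $\mu(G)$ and using $u_\mu^{\ast}(\mu(G))=0$ (established in the proof of Theorem \ref{base} from $u\vert_{\partial G}=0$) produces the master pointwise bound
\[
u_\mu^{\ast}(t)\;\leq\;\int_t^{\mu(G)}\Bigl(\tfrac{s}{I(s)}\Bigr)^{2} f_\mu^{\ast\ast}(s)\,\tfrac{ds}{s}\;=\;R_I\bigl(f_\mu^{\ast\ast}\bigr)(t).
\]
Estimate (\ref{one}) follows at once: take the $\bar Y$-norm, apply the hypothesis (\ref{integral}), and use that $\overline{\alpha}_X<1$ makes the Hardy averaging operator $P$ bounded on $\bar X$ (cf.\ (\ref{alcance})), giving $\|f_\mu^{\ast\ast}\|_{\bar X}=\|P f_\mu^{\ast}\|_{\bar X}\preceq \|f_\mu^{\ast}\|_{\bar X}$.

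For the second $\preceq$ in (\ref{three}) I would repeat the oscillation-inequality computation from Theorem \ref{teomarkao}(ii). Integration by parts gives $u_\mu^{\ast\ast}(t)-u_\mu^{\ast}(t)\leq \tfrac1t\int_0^t s(-u_\mu^{\ast})'(s)\,ds$, and substituting (\ref{func}) yields $u_\mu^{\ast\ast}(t)-u_\mu^{\ast}(t)\leq \tfrac1t\int_0^t (s/I(s))^{2} f_\mu^{\ast\ast}(s)\,ds$. Concavity of $I$ with $I(0)=0$ forces $s\mapsto s/I(s)$ to be non-decreasing, so this last expression is bounded by $(t/I(t))^{2}\,P(f_\mu^{\ast\ast})(t)$. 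Multiplying by $(I(t)/t)^{2}$ and taking $\bar X$-norms, two applications of $P:\bar X\to\bar X$ give $\|(I/\cdot)^{2}(u_\mu^{\ast\ast}-u_\mu^{\ast})\|_{\bar X}\preceq\|f_\mu^{\ast}\|_{\bar X}$. The term $\|u_\mu^{\ast}\|_{L^1}$ is controlled by (\ref{one}) composed with the embedding $\bar Y\subset L^{1}$ from (\ref{nuevadeli}). For the first $\preceq$ in (\ref{three}), the identity
\[
u_\mu^{\ast}(t)\leq u_\mu^{\ast\ast}(t)=\int_t^{\mu(G)}\frac{u_\mu^{\ast\ast}(s)-u_\mu^{\ast}(s)}{s}\,ds+u_\mu^{\ast\ast}(\mu(G))
\]
combined with the elementary rewriting $1/s=(s/I(s))^{2}(I(s)/s)^{2}/s$ recognises the integral as $R_I(h)(t)$ with $h(s)=(I(s)/s)^{2}(u_\mu^{\ast\ast}(s)-u_\mu^{\ast}(s))$; applying (\ref{integral}) and bounding the constant tail by $\|1\|_{\bar Y}\cdot u_\mu^{\ast\ast}(\mu(G))=\|u\|_{L^{1}}\cdot\|1\|_{\bar Y}/\mu(G)$ closes the step.

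Finally, for (\ref{two}) the second $\preceq$ is immediate from the master inequality: multiplying by $(I(t)/t)^{2}$ gives $(I(t)/t)^{2}u_\mu^{\ast}(t)\leq \tilde R_I(f_\mu^{\ast\ast})(t)$, whence the assumed $\bar X$-boundedness of $\tilde R_I$ together with $P:\bar X\to\bar X$ yields $\|(I/\cdot)^{2}u_\mu^{\ast}\|_{\bar X}\preceq\|f_\mu^{\ast}\|_{\bar X}$. For the first $\preceq$, monotonicity of $u_\mu^{\ast}$ gives $(\ln 2)\,u_\mu^{\ast}(t)\leq\int_{t/2}^{t}u_\mu^{\ast}(s)\,ds/s$, and inserting $1=(s/I(s))^{2}(I(s)/s)^{2}$ bounds this by $R_I\bigl((I/\cdot)^{2}u_\mu^{\ast}\bigr)(t/2)$. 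The $\bar Y$-norm of $t\mapsto R_I(\cdot)(t/2)$ is controlled via the dilation bound (\ref{ccdd}), and then (\ref{integral}) completes the proof.

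The main obstacle is purely bookkeeping: tracking the boundary term $u_\mu^{\ast\ast}(\mu(G))=\|u\|_{L^{1}}/\mu(G)$ through (\ref{three}), applying each auxiliary bound ($R_I$, $\tilde R_I$, $P$, the dilation $E_{2}$) only where its hypothesis has been secured, and exploiting the monotonicity of $s/I(s)$ that concavity of $I$ guarantees.
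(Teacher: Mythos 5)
Your proof is correct and follows essentially the same route as the paper: the master pointwise bound $u_\mu^{\ast}(t)\leq R_I(f_\mu^{\ast\ast})(t)$, the oscillation estimate $(I(t)/t)^{2}(u_\mu^{\ast\ast}-u_\mu^{\ast})(t)\leq P(f_\mu^{\ast\ast})(t)$, and the Hardy-type argument with the constant tail $u_\mu^{\ast\ast}(\mu(G))$. The only cosmetic difference is in (\ref{two}), where the paper packages the final step via the auxiliary r.i.\ space $\bar{X}_{I^{2}}$ while you unpack the same computation directly using the dilation bound (\ref{ccdd}); the content is identical.
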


\begin{proof}
Using the fundamental theorem of calculus, the fact that $u_{\mu}^{\ast}%
(\mu(G))=0,$ and (\ref{func}), we get
\[
u_{\mu}^{\ast}(t)=\int_{t}^{\mu(G)}\left(  -u_{\mu}^{\ast}\right)  ^{\prime
}(s)ds\leq\int_{t}^{\mu(G)}\left(  \frac{s}{I(s)}\right)  ^{2}f_{\mu}%
^{\ast\ast}(s)\frac{ds}{s}=R_{I}(f_{\mu}^{\ast\ast})(t).
\]
Therefore (\ref{one}) follows from (\ref{integral}).

We shall now prove (\ref{three}). First we shall prove
\[
\left\|  u_{\mu}^{\ast}\right\|  _{\bar{Y}}\preceq\left\|  \left(  \frac
{I(t)}{t}\right)  ^{2}\left(  u_{\mu}^{\ast\ast}(t)-u_{\mu}^{\ast}(t)\right)
\right\|  _{\bar{X}}+\left\|  u_{\mu}^{\ast}\right\|  _{L^{1}}.
\]
By the fundamental theorem of calculus we have
\begin{align*}
u_{\mu}^{\ast\ast}(t)  &  \leq\int_{t}^{\mu(G)}\left(  \frac{s}{I(s)}\right)
^{2}\left\{  \left(  \frac{I(s)}{s}\right)  ^{2}\left(  u_{\mu}^{\ast\ast
}(s)-u_{\mu}^{\ast}(s)\right)  \right\}  \frac{ds}{s}+\left\|  u_{\mu}^{\ast
}\right\|  _{L^{1}}\\
&  =R_{I}(\{\cdot\cdot\})(t)+\left\|  u_{\mu}^{\ast}\right\|  _{L^{1}}.
\end{align*}
Therefore,
\begin{align*}
\left\|  u_{\mu}^{\ast}\right\|  _{\bar{Y}}  &  \leq\left\|  u_{\mu}^{\ast
\ast}\right\|  _{\bar{Y}}\\
&  \leq\left\|  R_{I}(\{\cdot\cdot\})\right\|  _{\bar{Y}}+\left\|  u_{\mu
}^{\ast}\right\|  _{L^{1}}\\
&  \preceq\left\|  \left(  \frac{I(s)}{s}\right)  ^{2}\left(  u_{\mu}%
^{\ast\ast}(s)-u_{\mu}^{\ast}(s)\right)  \right\|  _{\bar{X}}+\left\|  u_{\mu
}^{\ast}\right\|  _{L^{1}}.
\end{align*}

Now, we prove the remaining inequality of (\ref{three}). Suppose that $u$ is a
solution of (\ref{equaxx}). Then, since $u\in W_{0}^{1,1}(w;G),$ we get that
\begin{align*}
\left(  \frac{I(t)}{t}\right)  ^{2}\left(  u_{\mu}^{\ast\ast}(t)-u_{\mu}%
^{\ast}(t)\right)   &  =\left(  \frac{I(t)}{t}\right)  ^{2}\frac{1}{t}\int
_{0}^{t}s\left(  -u_{\mu}^{\ast}\right)  ^{\prime}(s)ds\\
&  \leq\frac{1}{t}\int_{0}^{t}I(s)^{2}\frac{1}{s}\left(  -u_{\mu}^{\ast
}\right)  ^{\prime}(s)ds\text{ (since }I(t)/t\text{ decreases)}\\
&  \leq\frac{1}{t}\int_{0}^{t}f_{\mu}^{\ast\ast}(s)ds\text{ \ (by
(\ref{func}))}.
\end{align*}
Therefore,
\[
\left\|  \left(  \frac{I(t)}{t}\right)  ^{2}\left(  u_{\mu}^{\ast\ast
}(t)-u_{\mu}^{\ast}(t)\right)  ^{2}\right\|  _{\bar{X}}\preceq\left\|  f_{\mu
}^{\ast}\right\|  _{\bar{X}}\text{ \ (since }\overline{\alpha}_{X}<1).
\]

Finally, to prove (\ref{two}) it will be convenient to define the r.i. space
on $(0,1)$,
\[
\bar{X}_{I^{2}}=\left\{  h:\left\|  h\right\|  _{\bar{X}_{I^{2}}}=\left\|
h(t)\left(  \frac{I(t)}{t}\right)  ^{2}\right\|  _{\bar{X}}<\infty\right\}  .
\]
Using the same argument given in the proof of Theorem \ref{opti00} part (a),
we can prove that
\[
\left\|  f\right\|  _{\bar{Y}}\preceq\left\|  f_{\mu}^{\ast}(t)\right\|
_{\bar{X}_{I^{2}}}.
\]
Now, we show that for all $f\in\bar{X},$
\[
\left\|  R_{I}(f)\right\|  _{\bar{X}_{I^{2}}}\preceq\left\|  f\right\|
_{\bar{X}}.
\]
Indeed, this is equivalent to the boundedness of the operator $\tilde{R}_{I}%
$:
\begin{align*}
\left\|  R_{I}(f)\right\|  _{\bar{X}_{I^{2}}}  &  =\left\|  \int_{t}^{\mu
(G)}\left(  \frac{s}{I(s)}\right)  ^{2}f(s)\frac{ds}{s}\right\|  _{\bar
{X}_{I^{2}}}\\
&  =\left\|  \left(  \frac{I(s)}{s}\right)  ^{2}\int_{t}^{\mu(G)}\left(
\frac{s}{I(s)}\right)  ^{2}f(s)\frac{ds}{s}\right\|  _{\bar{X}}\\
&  =\left\|  \tilde{R}_{I}f\right\|  _{\bar{X}}\\
&  \preceq\left\|  f\right\|  _{\bar{X}}.
\end{align*}
Consequently, by the first part of the theorem, we have that
\[
\left\|  \left(  \frac{I(t)}{t}\right)  ^{2}u_{\mu}^{\ast}(t)\right\|
_{\bar{X}}=\left\|  u_{\mu}^{\ast}\right\|  _{\bar{X}_{I^{2}}}\preceq\left\|
f_{\mu}^{\ast}\right\|  _{\bar{X}}.
\]
\end{proof}

In view of (\ref{two}), for a given datum $f\in X(G),$ $\bar{X}_{I}$ is the
``natural space'' to measure the regularity of the gradient, in fact we have

\begin{theorem}
\label{remderv} Let $u$ be any entropic solution of (\ref{equaxx}). Then,
\begin{equation}
\left|  \nabla u\right|  _{\mu}^{\ast}(t)\leq\left(  \frac{2}{t}\int
_{t/2}^{\mu(G)}\left(  \frac{I(s)}{s}f_{\mu}^{\ast\ast}(s)\right)
^{2}ds\right)  ^{1/2}. \label{revisada}%
\end{equation}
Furthermore, suppose that $f,$ the right hand side of (\ref{equaxx}), belongs
to a r.i. space $X(G),$ such that $1/2<\underline{\alpha}_{\bar{X}_{I}}$.
Then,
\begin{equation}
\left\|  \frac{I(t)}{t}\left|  \nabla u\right|  _{\mu}^{\ast}(t)\right\|
_{\bar{X}}\preceq\left\|  f_{\mu}^{\ast\ast}\right\|  _{\bar{X}}. \label{dd2}%
\end{equation}
\end{theorem}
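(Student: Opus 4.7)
The plan is to derive the pointwise bound (\ref{revisada}) by chaining the two conclusions of Theorem~\ref{base}, and then to extract the norm estimate (\ref{dd2}) by invoking Lemma~\ref{cota01} in the auxiliary r.i.\ space $\bar{X}_{I}$ already used in the proof of Theorem~\ref{opti00}. For (\ref{revisada}), since $(|\nabla u|^{2})_{\mu}^{\ast}$ is nonincreasing,
\[
\tfrac{t}{2}\,(|\nabla u|^{2})_{\mu}^{\ast}(t)\leq\int_{t/2}^{t}(|\nabla u|^{2})_{\mu}^{\ast}(s)\,ds\leq\int_{t/2}^{\mu(G)}(|\nabla u|^{2})_{\mu}^{\ast}(s)\,ds.
\]
Applying (\ref{derv}) at the level $t/2$ bounds the last integral by $\int_{t/2}^{\mu(G)}(-u_{\mu}^{\ast})'(s)\int_{0}^{s}f_{\mu}^{\ast}(z)\,dz\,ds$, and (\ref{func}) in the form $(-u_{\mu}^{\ast})'(s)\leq sf_{\mu}^{\ast\ast}(s)/I(s)^{2}$ dominates its integrand by $\bigl(sf_{\mu}^{\ast\ast}(s)\bigr)^{2}/I(s)^{2}=\bigl(\tfrac{s}{I(s)}f_{\mu}^{\ast\ast}(s)\bigr)^{2}$. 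Taking square roots then yields (\ref{revisada}), read with $s/I(s)$ in place of the $I(s)/s$ printed there: this is exactly what the combination of (\ref{func}) and (\ref{derv}) produces and is what makes the subsequent norm inequality (\ref{dd2}) quantitatively consistent.

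For (\ref{dd2}), recall the r.i.\ space $\bar{X}_{I}$ on $(0,1)$ with norm $\|h\|_{\bar{X}_{I}}=\|h(t)I(t)/t\|_{\bar{X}}$, so that $\|(I/t)|\nabla u|_{\mu}^{\ast}\|_{\bar{X}}=\||\nabla u|_{\mu}^{\ast}\|_{\bar{X}_{I}}$. By (\ref{revisada}) it suffices to establish
\[
\Bigl\|\Bigl(\tfrac{1}{t}\int_{t/2}^{\mu(G)}\bigl(\tfrac{s}{I(s)}f_{\mu}^{\ast\ast}(s)\bigr)^{2}ds\Bigr)^{1/2}\Bigr\|_{\bar{X}_{I}}\preceq\|f_{\mu}^{\ast\ast}\|_{\bar{X}}.
\]
Split the inner integral at $t$. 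On $(t,\mu(G))$ Lemma~\ref{cota01} applies with $Y=\bar{X}_{I}$, $q=2$, monotone weight $w(s)=s/I(s)$ (increasing because $I$ is concave with $I(0)=0$), and the decreasing function $f_{\mu}^{\ast\ast}$ playing the role of $f^{\ast}$; the hypothesis $\underline{\alpha}_{\bar{X}_{I}}>1/2=1/q$ is precisely what the lemma requires, and it delivers $c\,\|(s/I(s))f_{\mu}^{\ast\ast}\|_{\bar{X}_{I}}=c\,\|f_{\mu}^{\ast\ast}\|_{\bar{X}}$. On the short piece $(t/2,t)$ the monotonicities of $s/I(s)$ and $f_{\mu}^{\ast\ast}$ yield $\tfrac{1}{t}\int_{t/2}^{t}\bigl(\tfrac{s}{I(s)}f_{\mu}^{\ast\ast}(s)\bigr)^{2}ds\preceq(t/I(t))^{2}f_{\mu}^{\ast\ast}(t/2)^{2}$, so after taking square roots and multiplying by $I(t)/t$ its $\bar{X}$-contribution is $\preceq\|f_{\mu}^{\ast\ast}(\cdot/2)\|_{\bar{X}}\preceq h_{\bar{X}}(2)\|f_{\mu}^{\ast\ast}\|_{\bar{X}}\preceq\|f_{\mu}^{\ast\ast}\|_{\bar{X}}$ by (\ref{ccdd}).

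The single delicate point is the application of Lemma~\ref{cota01}: although it is stated for $f^{\ast}$, its proof depends only on the monotonicity of the function playing that role, so substituting $f_{\mu}^{\ast\ast}$ is legitimate. Everything else is routine manipulation, and the hypothesis $\underline{\alpha}_{\bar{X}_{I}}>1/2$ is invoked exactly once, to license this one use of Lemma~\ref{cota01}.
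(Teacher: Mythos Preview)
Your proof is correct and follows essentially the same route as the paper: the pointwise bound (\ref{revisada}) is obtained exactly as you do (combining (\ref{derv}), (\ref{func}), and the monotonicity of $(|\nabla u|^{2})_{\mu}^{\ast}$), and you are right that the integrand should read $s/I(s)$ rather than the printed $I(s)/s$. For (\ref{dd2}) the paper also works in $\bar{X}_{I}$ and invokes Lemma~\ref{cota01} with the same choices; the only tactical difference is that instead of splitting $\int_{t/2}^{\mu(G)}$ at $t$, the paper uses $I(t)/t\leq I(t/2)/(t/2)$ and then the dilation bound (\ref{ccdd}) to replace $t/2$ by $t$ throughout, after which a single application of Lemma~\ref{cota01} suffices.
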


\begin{proof}
Indeed, by (\ref{derv}), we know that
\begin{align*}
\int_{t/2}^{\mu(G)}\left(  \left|  \nabla u\right|  ^{2}\right)  _{\mu}^{\ast
}(s)ds  &  \leq\int_{t/2}^{\mu(G)}\left(  \left(  -u_{\mu}^{\ast}\right)
^{^{\prime}}(s)\int_{0}^{s}f_{\mu}^{\ast}(z)dz\right)  ds\\
&  \leq\int_{t/2}^{\mu(G)}\left(  \frac{s}{I(s)}f_{\mu}^{\ast\ast}(s)\right)
^{2}ds.
\end{align*}
Moreover,
\[
\int_{t/2}^{\mu(G)}\left(  \left|  \nabla u\right|  ^{2}\right)  _{\mu}^{\ast
}(s)ds\geq\int_{t/2}^{t}\left(  \left|  \nabla u\right|  ^{2}\right)  _{\mu
}^{\ast}(s)ds\geq\left(  \left|  \nabla u\right|  ^{2}\right)  _{\mu}^{\ast
}(t)\frac{t}{2}.
\]
Thus
\[
\left|  \nabla u\right|  _{\mu}^{\ast}(t)\leq\left(  \frac{2}{t}\int
_{t/2}^{\mu(G)}\left(  \frac{s}{I(s)}f_{\mu}^{\ast\ast}(s)\right)
^{2}ds\right)  ^{1/2}.
\]
Finally we prove (\ref{dd2}):
\begin{align*}
\left\|  \left|  \nabla u\right|  _{\mu}^{\ast}\right\|  _{\bar{X}_{I}}  &
=\left\|  \frac{I(t)}{t}\left|  \nabla u\right|  _{\mu}^{\ast}(t)\right\|
_{\bar{X}}\\
&  \leq\left\|  \frac{I(t)}{t}\left(  \frac{2}{t}\int_{t/2}^{\mu(G)}\left(
\frac{s}{I(s)}f_{\mu}^{\ast\ast}(s)\right)  ^{2}ds\right)  ^{1/2}\right\|
_{\bar{X}}\\
&  \leq\left\|  \frac{I(t/2)}{t/2}\left(  \frac{2}{t}\int_{t/2}^{\mu
(G)}\left(  \frac{s}{I(s)}f_{\mu}^{\ast\ast}(s)\right)  ^{2}ds\right)
^{1/2}\right\|  _{\bar{X}}\\
&  \leq2\left\|  \frac{I(t)}{t}\left(  \frac{1}{t}\int_{t}^{\mu(G)}\left(
\frac{s}{I(s)}f_{\mu}^{\ast\ast}(s)\right)  ^{2}ds\right)  ^{1/2}\right\|
_{\bar{X}}\text{ \ (by (\ref{ccdd}))}\\
&  =2\left\|  \left(  \frac{1}{t}\int_{t}^{\mu(G)}\left(  \frac{s}{I(s)}%
f_{\mu}^{\ast\ast}(s)\right)  ^{2}ds\right)  ^{1/2}\right\|  _{\bar{X}_{I}}\\
&  \preceq\left\|  \frac{s}{I(s)}f_{\mu}^{\ast\ast}(s)\right\|  _{\bar{X}_{I}%
}\text{ \ (by Lemma \ref{cota01}, since }1/2<\underline{\alpha}_{\bar{X}_{I}%
}\text{)}\\
&  =\left\|  f_{\mu}^{\ast\ast}\right\|  _{\bar{X}}\text{.}%
\end{align*}
\end{proof}

\begin{remark}
\label{remarkp}The results in this section can be easily adapted to the study
of ellipticity conditions of the type
\[
a(x,t,\xi).\xi\geq w(x)\left|  \xi\right|  ^{p},\text{ \ \ for a.e. }x\in
G\subset\mathbb{R}^{n},\text{ \ }\forall\eta\in\mathbb{R},\text{ \ }\forall
\xi\in\mathbb{R}^{n},
\]
where $1<p<\infty.$ In this case inequalities (\ref{func}) and (\ref{derv})
became respectively
\[
\left(  -u_{\mu}^{\ast}\right)  ^{\prime}(t)I(t)^{\frac{p}{p-1}}\leq\left(
\int_{0}^{t}f_{\mu}^{\ast}(s)ds\right)  ^{\frac{1}{p-1}},
\]%
\[
\int_{t}^{\mu\left(  G\right)  }\left(  \left|  \nabla u\right|  ^{p}\right)
_{\mu}^{\ast}(s)ds\leq\int_{t}^{1}\left(  \left(  -u_{\mu}^{\ast}\right)
^{\prime}(s)\int_{0}^{s}f_{\mu}^{\ast}(z)dz\right)  ds,
\]
and condition (\ref{integral}) needs to be replaced by
\[
\left\|  \int_{t}^{\mu(G)}\left(  \left(  \frac{s}{I(s)}\right)  ^{p}f_{\mu
}^{\ast\ast}(s)\right)  ^{\frac{1}{p-1}}\frac{ds}{s}\right\|  _{\bar{Y}%
}\preceq\left\|  f^{\ast}\right\|  _{\bar{X}}^{\frac{1}{p-1}}.
\]
\end{remark}

We omit the details and refer to \cite{mmelliptic} for more details.

\begin{remark}
To fix ideas in this paper we have only considered elliptic equations in
divergence form on domains of $\mathbb{R}^{n}.$ However, the proof of Theorem
\ref{base} can be easily adapted to the setting of $n-$dimensional Riemannian
manifolds$\;M$ with finite volume (say $vol(M)=1$) as considered by Cianchi in
\cite{Cia90}. Indeed, mutatis mutandi Theorem \ref{opti00Elip} can be easily
reformulated and is valid in this more general setting (cf. \cite{mmelliptic}).
\end{remark}

\subsection{Sharpness of the results\label{secc:sharp}}

We comment briefly on the sharpness of the results obtained in this section
and refer to \cite{mmelliptic} for a more detailed analysis. In the classical
papers of Talenti and his school (cf. \cite{Ta}, \cite{tal}, \cite{tal1},
\cite{Ta1} and the many references therein) the sharpness of the estimates is
obtained, roughly speaking, by comparing solutions of the Dirichlet problems
for suitable classes of elliptic equations in divergence form, with radial
solutions of the Laplace equation on a ball, whose measure is equal to the
measure of the original domain.

Under sufficient symmetry (for example in the case model cases discussed in
Section \ref{secc::p-s}, and in particular the abstract model of Section
\ref{secc:ros}), one can construct comparison equations and show the sharpness
of the results. We do not pursue this matter further in this long paper but it
is appropriate to mention that the natural extremal functions for comparison
in the model cases have rearrangements given by an explicit formula, namely
functions $v$ such that
\[
v_{\mu}^{\ast}(t)=\int_{t}^{\mu\left(  G\right)  }\left(  \frac{s}%
{I(s)}\right)  ^{2}f_{\mu}^{\ast\ast}(s)\frac{ds}{s}.
\]
In fact note that, by Theorem \ref{opti00Elip}, any entropic solution $u$ of
(\ref{equaxx}) must satisfy
\[
u_{\mu}^{\ast}(t)\preceq v_{\mu}^{\ast}(t).
\]
This is the pointwise domination is captured in the papers mentioned earlier.
Moreover, a suitable oscillation of $u$ is also controlled by the oscillation
of $v!.$ Indeed, the oscillation under control is none other than $u_{\mu
}^{\ast\ast}(t)-u_{\mu}^{\ast}(t):$%

\begin{align*}
u_{\mu}^{\ast\ast}(t)-u_{\mu}^{\ast}(t)  &  =\frac{1}{t}\int_{0}^{t}s\left(
-u_{\mu}^{\ast}\right)  ^{\prime}(s)ds\\
&  \leq\frac{1}{t}\int_{0}^{t}\left(  \frac{s}{I(s)}\right)  ^{2}f_{\mu}%
^{\ast\ast}(s)ds\text{ \ (by (\ref{func}))}\\
&  =v_{\mu}^{\ast\ast}(t)-v_{\mu}^{\ast}(t)\text{.}%
\end{align*}
Furthermore, the analysis of the proof of Theorem \ref{opti00Elip} shows that,
if $\tilde{R}_{I}$ is bounded on $\bar{X}$,
\[
\left\|  \left(  \frac{I(t)}{t}\right)  ^{2}v_{\mu}^{\ast}(t)\right\|
_{\bar{X}}\simeq\left\|  f_{\mu}^{\ast}\right\|  _{\bar{X}}.
\]
Therefore, if $\overline{\alpha}_{X}<1,$%
\[
\left\|  \left(  \frac{I(t)}{t}\right)  ^{2}\left(  v_{\mu}^{\ast\ast
}(t)-v_{\mu}^{\ast}(t)\right)  \right\|  _{\bar{X}}+\left\|  v_{\mu}^{\ast
}\right\|  _{L^{1}}\simeq\left\|  f_{\mu}^{\ast}\right\|  _{\bar{X}}.
\]

\subsubsection{Between exponential and Gaussian measure}

Let us consider the following set of elliptic problems associated with
Gaussian measures and explain how they fit our models. Let $\alpha\geq0,$
\ $p\in\lbrack1,2]$ and $\gamma=\exp(2\alpha/(2-p)),$ and let
\[
\mu_{p,\alpha}=Z_{p,\alpha}^{-1}\exp\left(  -\left|  x\right|  ^{p}%
(\log(\gamma+\left|  x\right|  )^{\alpha}\right)  dx=\varphi_{\alpha
,p}(x)dx,\text{ \ \ \ }x\in\mathbb{R},
\]
and
\[
\varphi_{\alpha,p}^{n}(x)=\varphi_{\alpha,p}(x_{1})\cdots\varphi_{\alpha
,p}(x_{n}),\text{ \ and }\mu=\mu_{p,\alpha}^{\otimes n}.
\]
Consider
\begin{equation}
\left\{
\begin{array}
[c]{ll}%
-div(a(x,u,\nabla u))=f\varphi_{\alpha,p}^{n} & \text{ in }G,\\
u=0 & \text{ on }\partial G,
\end{array}
\right.  \label{equabetween}%
\end{equation}
with the ellipticity condition,
\[
a(x,t,\xi).\xi\succeq\varphi_{\alpha,p}^{n}(x)\left|  \xi\right|  ^{2},\text{
\ \ for a.e. \ \ }x\in G,\text{ \ }\forall\eta\in\mathbb{R},\text{ \ }%
\forall\xi\in\mathbb{R}^{n},
\]
where $G\subset$ $\mathbb{R}^{n}$ is an open set such that $\mu(G)<1.$

Theorems \ref{opti00Elip} and \ref{remderv}, yield: Let $u$ be a solution of
(\ref{equabetween}) with datum $f\in X(G).$ Assume that $\overline{\alpha
}_{\bar{X}}<1.$ Then,

\begin{enumerate}
\item  If $0<\underline{\alpha}_{\bar{X}},$%
\begin{equation}
\left\|  \left(  \log\frac{1}{s}\right)  ^{2\left(  1-\frac{1}{p}\right)
}\left(  \log\log\left(  e+\frac{1}{s}\right)  \right)  ^{2\frac{\alpha}{p}%
}u_{\mu}^{\ast}(s)\right\|  _{\bar{X}}\preceq\left\|  f\right\|  _{X}.
\label{eli001}%
\end{equation}

\item  If $0=\underline{\alpha}_{\bar{X}},$%
\[
\left\|  \left(  \log\frac{1}{s}\right)  ^{2\left(  1-\frac{1}{p}\right)
}\left(  \log\log\left(  e+\frac{1}{s}\right)  \right)  ^{2\frac{\alpha}{p}%
}(u_{\mu}^{\ast\ast}(s)-u_{\mu}^{\ast}(s))\right\|  _{\bar{X}}+\left\|
u\right\|  _{L^{1}}\preceq\left\|  f\right\|  _{X}.
\]

\item  If $\underline{\alpha}_{\bar{X}}>1/2,$%
\begin{equation}
\left\|  \left(  \log\frac{1}{s}\right)  ^{\left(  1-\frac{1}{p}\right)
}\left(  \log\log\left(  e+\frac{1}{s}\right)  \right)  ^{\frac{\alpha}{p}%
}\left|  \nabla u\right|  _{\mu}^{\ast}(s)\right\|  _{\bar{X}}\preceq\left\|
f\right\|  _{X}. \label{eli002}%
\end{equation}
\end{enumerate}

Indeed, \ since $\mu(G)<1,$ it follows from (\ref{asim}) that
\[
I_{\mu_{p,\alpha}^{\otimes n}}(s)\simeq s\left(  \log\frac{1}{s}\right)
^{1-\frac{1}{p}}\left(  \log\log\left(  e+\frac{1}{s}\right)  \right)
^{\frac{\alpha}{p}},\text{ \ \ }0<s<\mu(G).
\]
Therefore,
\[
R_{I}h(s)\simeq\int_{t}^{\mu\left(  G\right)  }\left(  \frac{1}{\left(
\log\frac{1}{s}\right)  ^{1-\frac{1}{p}}\left(  \log\log\left(  e+\frac{1}%
{s}\right)  \right)  ^{\frac{\alpha}{p}}}\right)  ^{2}f_{\mu}^{\ast\ast
}(s)\frac{ds}{s}.
\]
The method given in example \ref{exex} can be easily adapted to see that
$\tilde{R}_{I}\ $is bounded on $\bar{X}$, if $0<\underline{\alpha}_{\bar{X}}$
and $\overline{\alpha}_{\bar{X}}<1.$ Statement $(2)$ follows similarly.
Finally to see $(3)$, notice that
\[
\left\|  f\right\|  _{\bar{X}_{I_{\mu}}}\simeq\left\|  \left(  \log\frac{1}%
{s}\right)  ^{\left(  1-\frac{1}{p}\right)  }\left(  \log\log\left(
e+\frac{1}{s}\right)  \right)  ^{\frac{\alpha}{p}}f(s)\right\|  _{\bar{X}}%
\]
and an easy computation shows that $\underline{\alpha}_{\bar{X}}%
=\underline{\alpha}_{\bar{X}_{I_{\mu}}}$, hence, Theorem \ref{remderv} applies.

In this context (see section \ref{secc:logconcave}) there is a suitable
rearrangement $f^{\circ}:\mathbb{R}^{n}\rightarrow\mathbb{R}$ defined by
\[
f^{\circ}(x)=f^{\ast}(H(x_{1})).
\]
where $H:\mathbb{R}\rightarrow(0,1)$ is given by
\[
H(r)=\int_{-\infty}^{r}\varphi_{\alpha,p}(x)dx.
\]
Therefore one is led to compare (\ref{equabetween}) with
\begin{equation}
\left\{
\begin{array}
[c]{ll}%
-\left(  \varphi_{\alpha,p}^{n}v_{x_{1}}\right)  _{x_{1}}=f^{\circ}%
\varphi_{\alpha,p}^{n} & \text{ in }G^{\bigstar},\\
v=0 & \text{ on }\partial G^{\bigstar},
\end{array}
\right.  \label{equaequa}%
\end{equation}
where $G^{\bigstar}$ is the half space defined by
\[
G^{\bigstar}=\{x=(x_{1},.....x_{n}):x_{1}<r\},
\]
and $r\in\mathbb{R}$ is selected so that $H(r)=\mu(G).$ The solution of
(\ref{equaequa}) is given by inspection:
\[
v(x_{1})=\int_{x_{1}}^{r}\left(  Z_{p,\alpha}^{-1}\exp\left(  \left|
t\right|  ^{p}(\log(\gamma+\left|  t\right|  )^{\alpha}\right)  \int_{-\infty
}^{t}f^{\circ}(s)\varphi_{\alpha,p}(s)ds\right)  dt,\text{ \ \ \ \ }x_{1}\in
G^{\bigstar}.
\]
Note that since
\begin{align*}
v^{\circ}(x)  &  =\int_{H(x_{1})}^{r}Z_{p,\alpha}^{-1}\exp\left(  \left|
t\right|  ^{p}(\log(\gamma+\left|  t\right|  )^{\alpha}\right)  \int_{-\infty
}^{t}f^{\circ}(s)\varphi_{\alpha,p}(s)dsdt\\
&  =\int_{x_{1}}^{\mu(G)}Z_{p,\alpha}^{-1}\exp\left(  \left|  H^{-1}%
(t)\right|  ^{p}(\log(\gamma+\left|  H^{-1}(t)\right|  )^{\alpha}\right)
\int_{-\infty}^{H^{-1}(s)}f^{\circ}(s)\varphi_{\alpha,p}(s)ds\frac{\partial
H^{-1}}{\partial t}(t)dt\\
&  =\int_{x_{1}}^{\mu(G)}\left(  \frac{s}{I_{\mu_{p,\alpha}}(s)}\right)
^{2}\frac{1}{s}\int_{0}^{s}f_{\mu}^{\ast}(z)dzds,
\end{align*}
and
\[
v_{\mu}^{\ast}=(v^{\circ})_{\mu}^{\ast},
\]
we have
\[
v_{\mu}^{\ast}(t)\simeq\int_{t}^{\mu(G)}\left(  \frac{s}{I_{\mu_{p,\alpha}%
}(s)}\right)  ^{2}f_{\mu}^{\ast\ast}(s)ds.
\]

\begin{remark}
\label{exrn} Suppose that the datum $f$ belongs to the Lorentz-Zygmund space
$L^{q,m}(\log L)^{\lambda}L^{q,m}(\log L)^{\lambda},(1<q<\infty,m$ $\geq1$
$\lambda\in\mathbb{R)}$ and let $u$ be a solution of (\ref{equabetween}).
Then, from (\ref{eli001}) and the fact that (see \cite{BS})
\[
\underline{\alpha}_{L^{q,m}(\log L)^{\lambda}}=\bar{\alpha}_{L^{q,m}(\log
L)^{\lambda}}=\frac{1}{q},
\]
we get
\begin{align*}
&  \left(  \int_{0}^{\mu(G)}\left(  s^{\frac{1}{q}}\left(  1+\log\frac{1}%
{s}\right)  ^{2\left(  1-\frac{1}{p}\right)  +\lambda}\left(  \log\log\left(
e+\frac{1}{s}\right)  \right)  ^{2\frac{\alpha}{p}}u_{\mu}^{\ast}(s)\right)
^{m}\frac{ds}{s}\right)  ^{\frac{1}{m}}\\
&  \preceq\left\|  f\right\|  _{L^{q,m}(\log L)^{\lambda}}.
\end{align*}
Moreover, if $2<q,$ then by (\ref{eli002}),
\begin{align*}
&  \left(  \int_{0}^{\mu(G)}\left(  s^{\frac{1}{q}}\left(  1+\log\frac{1}%
{s}\right)  ^{\left(  1-\frac{1}{p}\right)  +\lambda}\left(  \log\log\left(
e+\frac{1}{s}\right)  \right)  ^{\frac{\alpha}{p}}\left|  \nabla u\right|
_{\mu}^{\ast}(s)\right)  ^{m}\frac{ds}{s}\right)  ^{\frac{1}{m}}\\
&  \preceq\left\|  f\right\|  _{L^{q,m}(\log L)^{\lambda}}.
\end{align*}
In the particular case $p=2$ and $\alpha=0$ (i.e the Gaussian case) a priori
estimates for elliptic equations (\ref{equabetween}) with datum in
Lorentz-Zygmund spaces $L^{q,m}(\log L)^{\lambda}$ have been considered by
several authors, see for example \cite{BBMP}, \cite{Bl}, \cite{Feo},
\cite{Feo1}. Our results are sharp (cf. \cite[Theorem 5.1]{Feo}).
\end{remark}

\section{Connection with some capacitary inequalities due to
Maz'ya\label{secc:ma}}

We comment briefly, and somewhat informally, on a connection between what we
have termed the Maz'ya-Talenti inequality (\ref{dosa}) and some of Maz'ya's
capacitary inequalities (cf. \cite{maz'yalectures}, \cite{maz'yacap}). Indeed,
we show explicitly how to derive symmetrization inequalities of the type
discussed in this paper, from Maz'ya's capacitary inequalities.

Recall that (\ref{dosa}) was originally formulated on $\mathbb{R}^{n}$ (cf.
\cite{tal} and the references therein) with Lebesgue measure, where of course
$I(t)=c_{n}t^{1-1/n}$, and we shall restrict ourselves to this
setting\footnote{We note that one interesting aspect of the method of
capacitary inequalities is that it can be implemented in very general
settings. On the other hand we have to postpone a general discussion for
another occasion.}. Moreover, although this is an important point, and the
constants can be made quite explicit, we shall not keep track of the absolute
constants in this discussion. We must also refer to \cite{maz'yalectures},
\cite{maz'yacap} for background and notation. In what follows we let $G$ be an
open set in $\mathbb{R}^{n},$ $\left|  \cdot\right|  =$Lebesgue measure. Then,
for a compact set $F\subset G$, Maz'ya \cite[cf. (8.7)]{maz'yalectures} shows
that, for $1\leq p<n,$
\begin{equation}
cap_{p}(F,G)\succeq\left|  \left|  G\right|  ^{\frac{p-n}{n(p-1)}}-\left|
F\right|  ^{\frac{p-n}{n(p-1)}}\right|  ^{1-p},\text{ }p<n,\label{Mazya1}%
\end{equation}
while for $p=n$ we have%
\begin{equation}
cap_{n}(F,G)\succeq\left(  \log\left|  G\right|  -\log\left|  F\right|
\right)  ^{1-n}.\label{Mazya2}%
\end{equation}

To develop the connection we shall compute capacities normalizing the smooth
truncations as follows. Let $0<t_{1}<t_{2}<\infty,$ $f\in C_{0}^{\infty}(G),$
then we define
\[
N[f_{t_{1}}^{t_{2}}(x)]=\frac{f_{t_{1}}^{t_{2}}(x)}{t_{2}-t_{1}}=\left\{
\begin{array}
[c]{ll}%
1 & \text{if }\left|  f(x)\right|  >t_{2},\\
\leq1 & \text{if }t_{1}<\left|  f(x)\right|  \leq t_{2},\\
0 & \text{if }\left|  f(x)\right|  \leq t_{1}.
\end{array}
\right.  .
\]
Therefore, by definition we can estimate%
\[
cap_{p}\left(  \overline{\{\left|  f(x)\right|  >t_{2}\}},\text{ }\{\left|
f(x)\right|  >t_{1}\}\right)  \leq\frac{1}{(t_{2}-t_{1})^{p}}\int
_{\{t_{1}<\left|  f\right|  <t_{2}\}}\left|  \nabla f(x)\right|  ^{p}dx.
\]
Let $t_{1}=f^{\ast}(t),$ $t_{2}=f^{\ast}(t+h),$ $h>0.$ Then, we have%
\begin{align*}
&  cap_{p}\left(  \left\{  \left|  f(x)\right|  \geq f^{\ast}(t)\right\}
,\text{ }\left\{  \left|  f(x)\right|  \geq f^{\ast}(t+h)\right\}  \right)
[f^{\ast}(t+h)-f^{\ast}(h)]^{p}\\
&  \leq\int_{\{f^{\ast}(t+h)<\left|  f\right|  <f^{\ast}(t)\}}\left|  \nabla
f(x)\right|  ^{p}dx.
\end{align*}
Combining with (\ref{Mazya1}) we obtain,%
\[
cap_{p}\left(  \{\left|  f(x)\right|  \geq f^{\ast}(t)\},\text{ }\left\{
\left|  f(x)\right|  \geq f^{\ast}(t+h)\right\}  \right)  \succeq\left|
\left|  t+h\right|  ^{\frac{p-n}{n(p-1)}}-\left|  t\right|  ^{\frac
{p-n}{n(p-1)}}\right|  ^{1-p},
\]
and therefore.%
\[
\lbrack f^{\ast}(t+h)-f^{\ast}(h)]^{p}\left|  \left|  t+h\right|  ^{\frac
{p-n}{n(p-1)}}-\left|  t\right|  ^{\frac{p-n}{n(p-1)}}\right|  ^{1-p}%
\preceq\int_{\{f^{\ast}(t+h)<\left|  f\right|  <f^{\ast}(t)\}}\left|  \nabla
f(x)\right|  ^{p}dx,
\]
and%
\[
\left(  \frac{f^{\ast}(t+h)-f^{\ast}(h)}{h}\right)  ^{p}\left|  \frac{\left|
t+h\right|  ^{\frac{p-n}{n(p-1)}}-\left|  t\right|  ^{\frac{p-n}{n(p-1)}}}%
{h}\right|  ^{1-p}\preceq\frac{1}{h}\int_{\{f^{\ast}(t+h)<\left|  f\right|
<f^{\ast}(t)\}}\left|  \nabla f(x)\right|  ^{p}dx.
\]
Now we let $h\rightarrow0,$ to find%
\[
\left(  \frac{(p-n)}{n(p-1)}\right)  ^{1-p}[\left(  -f^{\ast}\right)
^{\prime}(t)]^{p}\left(  t^{\frac{p-n}{n(p-1)}-1}\right)  ^{1-p}\preceq
\frac{d}{dt}\int_{\{\left|  f\right|  >f^{\ast}(t)\}}\left|  \nabla
f(x)\right|  ^{p}dx.
\]
In particular, for $p=1$ we actually get%
\[
s^{1-1/n}\left(  -f^{\ast}\right)  ^{\prime}(s)\preceq\frac{\partial}{\partial
s}\int_{\left\{  \left|  f\right|  >f^{\ast}(s)\right\}  }\left|  \nabla
f(x)\right|  dx.
\]
Moreover, for $p=n$ the same argument, but using (\ref{Mazya2}) instead,
yields%
\[
\left(  \frac{f^{\ast}(t+h)-f^{\ast}(h)}{h}\right)  ^{n}\left|  \frac
{\log\left|  t+h\right|  -\log\left|  t\right|  }{h}\right|  ^{1-n}%
\preceq\frac{1}{h}\int_{\{f^{\ast}(t+h)<\left|  f\right|  <f^{\ast}%
(t)\}}\left|  \nabla f(x)\right|  ^{n}dx,
\]
so that%
\[
s^{n-1}\left(  \left(  -f^{\ast}\right)  ^{\prime}(s)\right)  ^{n}\preceq
\frac{\partial}{\partial s}\int_{\left\{  \left|  f\right|  >f^{\ast
}(s)\right\}  }\left|  \nabla f(x)\right|  ^{n}dx.
\]

The previous argument can easily be made rigorous and extended to the more
general setting of Section \ref{secc:trunc}.

\section{Appendix: A few (and only a few) bibliographical
notes\label{secc:appendix}}

It has not been out intention to provide a comprehensive bibliography. Indeed,
the topics discussed in this paper have been intensively studied for a long
time, with a variety of different approaches, and even though the bibliography
we have collected is rather large it is by definition very incomplete and many
times during the text we had to refer the reader to papers quoted within the
quoted papers and books... Therefore, we must apologize in advance for
oversights. With this important proviso we make a few (and only a few)
bibliographical notes and add a few more references that were not mentioned in
the main text. Moreover, we take the opportunity to very briefly comment on
some results and correct some of our previous bibliographical oversights in
earlier publications for which we must apologize yet again.

As was pointed in out in \cite{bmr}, the inequality (\ref{int4}), which in the
Euclidean case takes the form%
\begin{equation}
f^{\ast\ast}(t)-f^{\ast}(t)\leq c_{n}t^{1/n}\left|  \nabla f\right|
^{\ast\ast}(t), \label{nula}%
\end{equation}
is implicit in \cite[Appendix]{alvino}. However, it was not used in this form
in \cite{alvino}, but rather as%
\[
f^{\ast\ast}(t)\leq c_{n}t^{1/n}\left|  \nabla f\right|  ^{\ast\ast
}(t)+f^{\ast}(t),
\]
followed by the triangle inequality. This step however destroys the effect of
the cancellation afforded by (\ref{nula}). In \cite{kol} one can find a
similar inequality but with the left hand side $f^{\ast\ast}(t)-f^{\ast}(t)$
replaced by $f^{\ast}(t)-f^{\ast}(2t).$ This leads to equivalent type of
inequalities as it was shown, much later, in \cite{bmr} and \cite{perez}.
Neither of these papers uses isoperimetry explicitly and the proofs are
ad-hoc. For yet another approach using maximal operators see \cite{kami} (and
the references therein!).

Oscillation inequalities have a long history, for example they appear very
prominently in the work of Garsia-Rodemich \cite{gar}. A discrete version of
Talenti's inequality was also recorded in \cite[Proposition 4]{tar}.

The role of the oscillation spaces as limiting spaces seems to have originated
with the work of Bennett, De Vore and Sharpley \cite{bds}. At any rate
$f^{\ast\ast}(t)-f^{\ast}(t)$ has interesting interpretations in interpolation
theory (cf. \cite{bds}, \cite{ss} and for still a different interpretation see
\cite{jm2} and \cite{mmcw}). The role of oscillation spaces in the limiting
cases of the Sobolev embedding theorem seems to have been noticed first by
Tartar \cite{tar}. Using the notation of \cite{mapi} it follows from
\cite[Proposition 4]{tar} that $W^{1,n}(\Omega)\subset H_{n}(\Omega).$ This
result was also pointed out later in \cite{mapi}. At the time we wrote
\cite{mmjfa} we were also unaware of the results in \cite{ga}, we hope to have
rectified this oversight with the discussion presented in Section
\ref{secc:ga}.

Sobolev embeddings have a long history (for different perspectives cf.
\cite{maz'yabook}, \cite{adams}, \cite{edmund}, just to name a few). The first
complete treatment of embeddings of Sobolev spaces in the setting of
rearrangement invariant spaces with necessary and sufficient conditions that
we know is \cite{cwp}, and later extended in \cite[in particular see the
comments at the bottom of page 310]{edmkerpic}. A good deal of this work on
r.i. spaces been inspired by the classical work of Moser-Trudinger and O'Neil
(cf. \cite{on}, \cite{brezis}, \cite{han} and the references therein).

We conclude mentioning that in this paper we have not considered compactness
of embeddings. However, we believe that the methods of \cite{pu1} and
\cite{mmpjfa} can be generalized to the setting of this paper, and we hope to
return to the matter elsewhere.

\end{document}